\documentclass[final,12pt,a4paper]{amsart}
    \title[Simple-minded systems and reduction]{Simple-minded systems and reduction for negative Calabi-Yau triangulated categories}

\author{Raquel Coelho Sim\~oes}
\author{David Pauksztello}

%========================================================================
%Use packages
%========================================================================

\usepackage{amssymb}
\usepackage{latexsym}
\usepackage{amsmath}
\usepackage{mathrsfs}
\usepackage[all]{xy}
\usepackage{enumitem}
\usepackage{graphicx}
\usepackage{fullpage}

%==============================================================================
%  Referencing and hyperreferencing commands
%==============================================================================

\usepackage[usenames,dvipsnames]{xcolor}
\usepackage{hyperref}
\hypersetup{
    colorlinks,
    linkcolor={red!50!black},
    citecolor={blue!50!black},
    urlcolor={blue!80!black}
}

\newcommand{\harxiv}[1]{ \href{http://arxiv.org/abs/#1}{\texttt{arXiv:#1}}}
\newcommand{\hyref}[2]{ \hyperref[#2]{#1~\ref*{#2}} }

%========================================================================
% Theorem environments
%========================================================================

\theoremstyle{plain}
\newtheorem{theorem}{Theorem}[section]
\newtheorem{lemma}[theorem]{Lemma}
\newtheorem{corollary}[theorem]{Corollary}
\newtheorem{proposition}[theorem]{Proposition}
\newtheorem{introtheorem}{Theorem}

\newtheorem{claim}{Claim}

%The following give theorem-like environments but without italics.
\theoremstyle{definition}
\newtheorem{remark}[theorem]{Remark}
\newtheorem{example}[theorem]{Example}
\newtheorem{definition}[theorem]{Definition} % numbered definition.
 % numbered definitions.
\newtheorem{setup}[theorem]{Setup}
\newtheorem{notation}[theorem]{Notation}

\newcommand{\Step}[1]{\medskip\noindent\textbf{Step #1.}}

%==============================================================================
%   Sans serif letters upper case 
%==============================================================================

\newcommand{\sD}{\mathsf{D}}
\newcommand{\sL}{\mathsf{L}}
\newcommand{\sR}{\mathsf{R}}
\newcommand{\sS}{\mathsf{S}}
\newcommand{\sT}{\mathsf{T}}
\newcommand{\sX}{\mathsf{X}}
\newcommand{\sY}{\mathsf{Y}}
\newcommand{\sZ}{\mathsf{Z}}

%Bounded derived category
\newcommand{\Db}{\mathsf{D}^b}

%==============================================================================
%     Blackboard bold letters upper case 
%==============================================================================

\newcommand{\bZ}{\mathbb{Z}}

% =======================================================================
% Math Commands
% =======================================================================

\renewcommand{\geq}{\geqslant}
\renewcommand{\leq}{\leqslant}
\renewcommand{\phi}{\varphi}
\renewcommand{\epsilon}{\varepsilon}

\DeclareMathOperator{\Hom}{\mathsf{Hom}}

\DeclareMathOperator{\id}{{\mathsf{id}}}

\DeclareMathOperator{\add}{\mathsf{add}}

%For the base field
\newcommand{\kk}{{\mathbf{k}}}

%For the Serre functor
\newcommand{\SSS}{\mathbb{S}}

\newcommand{\extn}[1]{\langle #1 \rangle}
\newcommand{\extnZ}[1]{\{ #1 \}}

% Right mutation
\newcommand{\Rmu}[1]{\sR_{\sS}(#1)}
% Left mutation
\newcommand{\Lmu}[1]{\sL_{\sS}(#1)}

\newcommand{\orth}{^\perp}
\newcommand{\orthw}{^{\perp_w}}

\newcommand{\Zone}{\mathbf{(Z1)}}
\newcommand{\Ztwo}{\mathbf{(Z2)}}

\newcommand{\TRone}{\mathbf{(TR1)}}
\newcommand{\TRtwo}{\mathbf{(TR2)}}
\newcommand{\TRthree}{\mathbf{(TR3)}}
\newcommand{\TRfour}{\mathbf{(TR4)}}

%==============================================================================
%   Arrows
%==============================================================================

\newcommand{\too}{\longrightarrow}
\newcommand{\rightlabel}[1]{\stackrel{#1}{\longrightarrow}}

%==============================================================================
%   Distinguished triangles
%==============================================================================

\newcommand{\tri}[3]{#1\rightarrow #2\rightarrow #3\rightarrow \Sigma #1}
\newcommand{\triZ}[3]{#1\rightarrow #2\rightarrow #3\rightarrow #1\extn{1}}
\newcommand{\trilabel}[4]{#1\stackrel{#4}{\longrightarrow} #2\longrightarrow #3\longrightarrow \Sigma #1}
\newcommand{\trilabelsZ}[6]{#1\stackrel{#4}{\longrightarrow} #2\stackrel{#5}{\longrightarrow} #3\stackrel{#6}{\longrightarrow}  #1 \extn{1}}
\newcommand{\trilabels}[6]{#1\stackrel{#4}{\longrightarrow} #2\stackrel{#5}{\longrightarrow} #3\stackrel{#6}{\longrightarrow} \Sigma #1}

%==============================================================================
%   Matrices 
%==============================================================================

\newenvironment{pmat}{\left[ \begin{smallmatrix}}{\end{smallmatrix} \right]}

%==============================================================================
% Fudges
%==============================================================================

%The following \entrymodifiers line gives baseline alignment in
%xymatrices, as opposed to the default centrepoint alignment.  Note that
%this only works properly if the objects in the diagrams are in
%textstyle.  If they're in scriptstyle, then \textfont2 should be
%changed to \scriptfont2.
\entrymodifiers={+!!<0pt,\fontdimen22\textfont2>}

\newcommand{\coloneqq}{\mathrel{\mathop:}=}

%=============================================================================
% Tikz
%=============================================================================

\usepackage{tikz}
	\usetikzlibrary{decorations.pathmorphing}
        \usetikzlibrary{topaths}
        \usetikzlibrary{automata,arrows}
        \usetikzlibrary{shapes.geometric} % LATEX and plain TEX when using Tik Z

%============================================================================
% End of header -- document begins here
%============================================================================

\begin{document}

\begin{abstract}
We develop the basic properties of $w$-simple-minded systems in $(-w)$-Calabi-Yau triangulated categories for $w \geq 1$. The main result is a reduction technique for negative Calabi-Yau triangulated categories. We show that the theory of simple-minded systems exhibits striking parallels with that of cluster-tilting objects. Our construction provides an inductive technique for constructing simple-minded systems.
\end{abstract}

\keywords{Simple-minded system, simple-minded mutation pair, Calabi-Yau triangulated category, Calabi-Yau reduction}

\subjclass[2010]{18E30,16G10}
%16E35:  Derived categories
%05E10: Combinatorial aspects of representation theory
%16G10: Representations of Artinian rings
%18E30: Derived categories, triangulated categories

\maketitle

{\small
\setcounter{tocdepth}{1}
\tableofcontents
}

% Hack to avoid inefficient automatic spacing.
\vspace{-1cm}

%============================================================================
% Introduction
\addtocontents{toc}{\protect{\setcounter{tocdepth}{-1}}}  % No toc entry for Introduction
\section*{Introduction} 
\addtocontents{toc}{\protect{\setcounter{tocdepth}{1}}}   % but enable toc entries for other sections
%============================================================================

From a representation-theoretic perspective, two algebras are equivalent when they have the same representation theory, i.e. equivalent module categories. Module categories have two important types of generators: projective modules and simple modules. 
Morita theory describes equivalences of module categories in terms of images of projective modules. Tilting theory is the generalisation of Morita theory to derived categories, describing equivalences of categories in terms tilting objects. Tilting objects, and more generally silting objects, can be thought of as `projective-minded' objects; see \cite{CKL}. 

Let $d \geq 2$. In a $d$-Calabi-Yau triangulated category the `projective-minded' objects are the $d$-cluster-tilting objects. Silting objects and $d$-cluster-tilting objects admit a mutation procedure \cite{AI, IY} enabling the construction of new silting objects or $d$-cluster-tilting objects from old ones.
Such mutation procedures were key in the categorification of Fomin and Zelevinsky's cluster algebras; see \cite{BMRRT,FZ}.
Modelled on the cluster mutation procedure, Iyama and Yoshino defined the notion of a mutation pair in a triangulated category and used this to construct a subfactor triangulated category \cite{IY}. This subfactor triangulated category has remarkable properties: it is smaller and simpler than the original category, if the original category was Calabi-Yau so is the new one, and there is a bijection between cluster-tilting objects in the subfactor and cluster tilting objects in the big category containing a given summand. This provides a powerful inductive technique -- known as \emph{Iyama-Yoshino reduction} -- for constructing cluster-tilting objects and studying mutation.
Iyama-Yoshino reduction has produced many generalisations and applications, for instance \cite{Iyama-Wemyss, Iyama-Yang, Li, Nakaoka, Wei, ZXO, ZZ}.
For example, \cite{Iyama-Yang} connects the silting reduction of \cite{AI} to Iyama-Yoshino reduction in the context of Amiot's construction of the generalised cluster category \cite{Amiot}.

The other kind of triangulated category important in representation theory is the stable module category. For such categories, an analogue of Morita theory is missing because the projective objects become invisible; the only natural generators are the simple modules. 
This has led to the study of `simple-minded' objects in \cite{Al-Nofayee, Asai, CKL, CS12, CS15, CS17, Dugas-tilting, Dugas, Koenig-Liu, Koenig-Yang}.
The central open problem for stable module categories is the Auslander--Reiten conjecture \cite{AR} which states that stable equivalences preserve the number of isomorphism classes of non-projective simple modules. This is known to hold if and only if it holds for selfinjective algebras \cite{Martinez}. It is therefore an important problem to study `simple-minded' objects in the context of selfinjective algebras. 

Simple-minded systems were introduced in \cite{Koenig-Liu} and their mutation theory was formally defined in \cite{Dugas}, cf. \cite{Koenig-Yang} in the setting of the derived category with simple-minded collections. The theory developed by Dugas in \cite{Dugas} works in the context of a $(-1)$-Calabi-Yau triangulated category. This is a natural setting: the key class of examples of selfinjective algebras are symmetric algebras, whose stable module categories are $(-1)$-Calabi-Yau.
In the context of triangulated categories generated by spherical objects, the first author extended the notion of simple-minded system to $w$-simple-minded system in $(-w)$-Calabi-Yau triangulated categories in \cite{CS17}, where $w \geq 1$. In that article, the first author observed a striking parallel between the mutation theory for $w$-simple-minded systems and $d$-cluster-tilting subcategories in the positive Calabi-Yau case tackled in \cite{HJ}.

In \cite{CS12, CS15}, closely related notions of Hom-configurations and Riedtmann configurations, inspired by Riedtmann's seminal classification of finite-type selfinjective algebras \cite{Riedtmann}, were studied in $(-1)$-Calabi-Yau orbit categories. In \cite{CS12}, a finite-type version of a simple-minded analogue of Iyama-Yoshino reduction was a crucial technical tool in building Hom-configurations and establishing a bijection with noncrossing partitions. In particular, a double-perpendicular category of a partial Hom-configuration was shown to be a $(-1)$-Calabi-Yau orbit category of smaller size. This led the first author to conjecture in \cite{CS-thesis} that this provided a `simple-minded' analogue of Iyama-Yoshino reduction.
By modelling our reduction technique on Dugas' definition of simple-minded mutation from \cite{Dugas}, we establish this conjecture. This is the first main result of this article; we refer the reader to Sections~\ref{sec:sms} and \ref{sec:mutation} for the required definitions.

\begin{introtheorem}[Theorems~\ref{thm:pretriangulated} and \ref{thm:triangulated}]
Let $\sD$ be a Hom-finite, Krull-Schmidt, $\kk$-linear triangulated category and $w\geq 1$. Suppose $\sS$ is a $w$-orthogonal collection whose extension closure $\extn{\sS}$ is functorially finite. Moreover assume that $\sS$ is an $\SSS_{-w}$-subcategory of $\sD$. Let 
\[
\sZ = \{d \in \sD \mid \Hom(\Sigma^i \sS, d) = 0 \text{ and } \Hom(d, \Sigma^{-i} \sS) = 0 \text{ for all } i=0,\ldots,w-1 \}.
\]
Then $\sZ$ is a triangulated category.
\end{introtheorem}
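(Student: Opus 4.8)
The plan is to follow the blueprint of the Iyama-Yoshino subfactor construction \cite{IY}, in the form adapted to the simple-minded setting by Dugas \cite{Dugas} for $w=1$. The routine part first: $\sZ$ is cut out of $\sD$ by vanishing of Hom-spaces, hence is closed under finite direct sums and direct summands, so it is Hom-finite and Krull-Schmidt. Moreover, since $\Hom(\sS,\sS)$ is semisimple and every $d\in\sZ$ is Hom-orthogonal to $\sS$ on both sides, a d\'evissage along the extension closure gives $\Hom(d,\extn{\sS})=0=\Hom(\extn{\sS},d)$ for all $d\in\sZ$. Consequently, in contrast with \cite{IY}, there is no rigid subcategory to factor out: $\sZ$ will be made triangulated as a genuine full subcategory of $\sD$, but equipped with a shift functor and a class of triangles that are \emph{not} those inherited from $\sD$.

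The crux is the construction of the shift autoequivalence $\langle1\rangle\colon\sZ\to\sZ$. For $d\in\sZ$ one first checks, using the $w$-orthogonality of $\sS$ and the Serre-type duality supplied by the $\SSS_{-w}$-hypothesis, that $\Sigma d$ satisfies all of the orthogonality conditions defining $\sZ$ except one, the obstruction being $\Hom(\sS,\Sigma d)$; dually, $\Sigma^{-1}d$ fails only $\Hom(\Sigma^{-1}d,\sS)=0$. One then uses the functorial finiteness of $\extn{\sS}$ (and of its $\Sigma$-shifts) to correct $\Sigma d$: setting $d\langle1\rangle$ to be the cone of the minimal right $\extn{\sS}$-approximation $e\to\Sigma d$, one argues --- using the $\SSS_{-w}$-duality, the $w$-orthogonality, and the structure of $\extn{\sS}$ as a length category with simple objects $\sS$ --- that $d\langle1\rangle$ lies in $\sZ$. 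One must then show that $d\langle1\rangle$ is independent of the choice of minimal approximation up to isomorphism; that $\langle1\rangle$ is functorial, the key point being that a morphism of $\sZ$ lifts \emph{uniquely} through these approximation triangles because of the perpendicularity just established; and that $\langle1\rangle$ is an equivalence, with quasi-inverse $\langle-1\rangle$ built dually from $\Sigma^{-1}$ and left $\extn{\sS}$-approximations. The distinguished triangles of $\sZ$ are then defined, following Iyama-Yoshino: a sequence $x\to y\to z\to x\langle1\rangle$ in $\sZ$ is distinguished if it is isomorphic to one obtained from a distinguished triangle $x\to y\to z'\to\Sigma x$ of $\sD$ with $x,y\in\sZ$ by correcting the third vertex $z'$ into $\sZ$.

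Granting this data, Theorem~\ref{thm:pretriangulated} is the verification of the axioms $\TRone$, $\TRtwo$, $\TRthree$. For $\TRone$ one completes a morphism of $\sZ$ to a triangle in $\sD$ and corrects its third vertex into $\sZ$, together with the trivial triangles and closure under isomorphism. For $\TRtwo$ one must show that rotating a $\sZ$-triangle and applying $\langle1\rangle$ again yields a distinguished $\sZ$-triangle; since rotation does not strictly commute with the correction procedure, this requires comparing two a priori different corrections of the same $\sD$-triangle and showing they agree up to isomorphism. For $\TRthree$ one lifts a commutative square of $\sZ$ to a morphism of triangles by first lifting it in $\sD$ and then using uniqueness of lifts through the approximation triangles to descend to $\sZ$. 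Theorem~\ref{thm:triangulated} then promotes $\sZ$ to a triangulated category by establishing the octahedral axiom $\TRfour$: from composable morphisms of $\sZ$ one forms the octahedron in $\sD$, and the task is to perform the correction into $\sZ$ simultaneously and coherently on all four of its triangles so that the output is an octahedron in $\sZ$.

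The main obstacle is twofold. First, the construction of the shift: because $\extn{\sS}$ is \emph{not} rigid --- in sharp contrast with \cite{IY} --- it is not formal that the approximation-corrected object $d\langle1\rangle$ returns to $\sZ$, and this has to be pinned down with care, using minimality of approximations (available from Hom-finiteness and Krull-Schmidt), the length structure of $\extn{\sS}$, and, crucially, the $\SSS_{-w}$-duality; showing that $\langle1\rangle$ is a well-defined equivalence is the technical heart of Theorem~\ref{thm:pretriangulated}. Second, and most demanding, the octahedral axiom: the bookkeeping required to patch the corrections of the four triangles of an octahedron into a single coherent octahedron in $\sZ$ is intricate, and this is the bulk of the work in Theorem~\ref{thm:triangulated}.
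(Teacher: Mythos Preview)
Your plan is correct and follows essentially the same route as the paper. Two refinements are worth noting.

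First, a structural difference: the paper factors the argument through an abstract notion of \emph{$\sS$-mutation pair}. Theorems~\ref{thm:pretriangulated} and~\ref{thm:triangulated} are proved for any extension-closed $\sZ$ such that $(\sZ,\sZ)$ is an $\sS$-mutation pair satisfying cone/cocone conditions $\Ztwo$, and a separate verification (Lemma~\ref{lem:setups}) checks that the specific $\sZ$ of the statement meets these hypotheses. You work directly with the concrete $\sZ$, which is perfectly fine; the abstraction buys some reusability but no real simplification.

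Second, your characterisation of the octahedral axiom as ``intricate bookkeeping'' understates one genuinely nontrivial point. In the paper's proof, after forming the octahedron in $\sD$ and correcting the relevant vertices into $\sZ$, the key step in showing that the remaining column is a distinguished triangle in $\sZ$ is Lemma~\ref{lem:cone-of-sigma}: the cone of the induced morphism $\sigma\colon s_f\to s_{af}$ between the minimal $\extn{\sS}$-approximations of $c_f$ and $c_{af}$ must itself lie in $\extn{\sS}$. This is not formal --- $\extn{\sS}$ is not rigid, so cones of maps in $\extn{\sS}$ need not stay there --- and its proof is a careful six-step induction along an $\sS$-composition series of $s_f$, making essential use of the $\SSS_{-w}$-hypothesis (or, alternatively, of $\Hom(\Sigma\sS,\sS)=0$). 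This lemma, rather than the diagram-chasing, is the substantive core of $\TRfour$; your plan should budget for it accordingly.
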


Note that $\sZ$ is not a triangulated subcategory $\sD$: it has a newly defined triangulated structure.
Our second main result says that this provides an inductive technique for building $w$-simple-minded systems.

\begin{introtheorem}[Theorem~\ref{thm:bijectionsms}]
Let $\sD$ be a Hom-finite, Krull-Schmidt, $\kk$-linear triangulated category and $w\geq 1$. Suppose $\sS$ is a $w$-orthogonal collection whose extension closure $\extn{\sS}$ is functorially finite and $\sZ$ be as above. Then there is a bijection,
\[
\{ \text{$w$-simple minded systems in $\sD$ containing $\sS$} \} \stackrel{1-1}{\longleftrightarrow}
\{ \text{$w$-simple minded systems in $\sZ$} \}.
\]
\end{introtheorem}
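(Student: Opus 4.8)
The plan is to realise the bijection as the pair of assignments $\sT \mapsto \sT \setminus \sS$ and $\bar{\sT} \mapsto \sS \cup \bar{\sT}$, where for a $w$-simple-minded system $\sT$ of $\sD$ with $\sS \subseteq \sT$ I write $\sT = \sS \sqcup \sT'$, with $\sT'$ the indecomposable summands of $\sT$ not lying in $\sS$. Since neither assignment changes objects and $\sS \cup \bar{\sT}$ visibly contains $\sS$, the two assignments are literally inverse to one another once they are known to be well defined, so the whole content of the theorem is: (i) $\sT'$ is a $w$-simple-minded system of $\sZ$, and (ii) $\sS \cup \bar{\sT}$ is a $w$-simple-minded system of $\sD$. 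I would organise the proof around (i) and (ii), using Theorems~\ref{thm:pretriangulated} and \ref{thm:triangulated} to control the triangulated structure of $\sZ$. Two facts from that construction are used repeatedly: the morphism spaces of $\sZ$ coincide with those of $\sD$ (because $\Hom(\extn{\sS}, z) = 0 = \Hom(z, \extn{\sS})$ for $z \in \sZ$, so no morphism between objects of $\sZ$ factors through $\extn{\sS}$), and for $z \in \sZ$ the object $\Sigma_{\sZ} z$ (the shift of $\sZ$) sits in a triangle of $\sD$ relating it to $\Sigma z$ through an object built from shifts of $\sS$ in the range visible to $\sZ$.

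For (i) I would argue in three steps. First, $\sT' \subseteq \sZ$: for $T' \in \sT'$ and $S \in \sS$, $\Hom(\Sigma^i S, T') = 0 = \Hom(\Sigma^i T', S)$ for $i = 1, \ldots, w-1$ by the $w$-orthogonality of $\sT$, and for $i = 0$ because $S$ and $T'$ are non-isomorphic objects of $\sT$; this is exactly the two-sided perpendicularity defining $\sZ$. Second, $\sT'$ is a $w$-orthogonal collection of $\sZ$ with local endomorphism rings: endomorphism rings and $\Hom$s between distinct objects are computed as in $\sD$, while the conditions involving $\Sigma_{\sZ}$ are read off from those involving $\Sigma$ by feeding the triangle defining $\Sigma_{\sZ}$ into a long exact sequence, whose correction terms are $\Hom$s between an object of $\sZ$ and an object of $\extn{\sS}$ up to the relevant shifts, hence vanish. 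Third, and this is the crux, $\sT'$ generates $\sZ$: beginning with a finite filtration in $\sD$ of a given $z \in \sZ$ whose subquotients lie in the shifts of $\sT$ allowed by the simple-minded system axiom, I would use the $w$-orthogonality relations to reorder the filtration so that the subquotients lying in shifts of $\sS$ are grouped together, then invoke $z \in \sZ$ to force those subquotients to vanish, and finally reinterpret the remaining filtration of $z$ by shifts of $\sT'$ as a filtration inside $\sZ$, using once more that $\sZ$-triangles are, up to $\extn{\sS}$-truncation, triangles of $\sD$.

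For (ii), the orthogonality part combines three inputs: $\sS$ is $w$-orthogonal by hypothesis; the mixed vanishing between $\sS$ and $\bar{\sT}$ is immediate from $\bar{\sT} \subseteq \sZ$; and the $w$-orthogonality of $\bar{\sT}$ inside $\sD$ follows from that inside $\sZ$ by the same $\Sigma$/$\Sigma_{\sZ}$ comparison as in the second step above, with local endomorphism rings inherited. For generation, given $d \in \sD$ I would use the functorial finiteness of $\extn{\sS}$, via the truncation triangles constructed for Theorem~\ref{thm:pretriangulated}, to produce a finite filtration of $d$ whose subquotients are either shifts of objects of $\sS$ or objects of $\sZ$; each $\sZ$-subquotient is then filtered by shifts of $\bar{\sT}$ because $\bar{\sT}$ is a $w$-simple-minded system of $\sZ$, and translating those $\sZ$-triangles back to $\sD$ and splicing yields a filtration of $d$ by shifts of $\sS \cup \bar{\sT}$, as required.

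I expect the main obstacle to be the interface between the generation conditions and the non-standard triangulated structure on $\sZ$: in the forward direction, peeling the shifts of $\sS$ off a $\sD$-filtration of an object of $\sZ$ and certifying the remainder as a genuine $\sZ$-filtration; in the backward direction, decomposing an arbitrary object of $\sD$ into an $\extn{\sS}$-part and a $\sZ$-part and lifting $\sZ$-filtrations to $\sD$. Both require careful bookkeeping of which shifts $\Sigma^i \sS$ occur, systematic use of the $w$-orthogonality relations to reorder subquotients, and matching $\Sigma_{\sZ}$ against $\Sigma$ through its defining triangle; the hypothesis that $\sS$ is an $\SSS_{-w}$-subcategory, already needed to build $\sZ$, supplies the Serre-duality symmetry of the perpendicularity conditions that makes the reorderings and cancellations go through. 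Once (i) and (ii) are in place the bijection is immediate, as the two assignments are inverse on the nose.
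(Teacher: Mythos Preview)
Your overall architecture matches the paper's: the same bijection maps $\sT \mapsto \sT \setminus \sS$ and $\bar{\sT} \mapsto \sS \cup \bar{\sT}$, and the same reduction to (i) and (ii). Your dimension-shifting argument for $w$-orthogonality in $\sZ$ is exactly what the paper does, and your backward-direction generation---decompose $d \in \sD$ into an $\extn{\sS}$-part and a $\sZ$-part via functorial finiteness, then lift a $\sZ$-filtration of the latter to $\sD$---is essentially the paper's route, carried out there via a torsion pair $(\sZ,\, \extn{\sS} * \cdots * \Sigma^{-w}\extn{\sS})$ and two short inductive claims translating $\extnZ{\sR\extn{-i}}$ into $\extn{\Sigma^{-1}\sT} * \cdots * \extn{\Sigma^{-i}\sT}$.

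The gap is in forward generation. Your plan---filter $z \in \sZ$ by shifts of $\sT$ in $\sD$, reorder so the $\sS$-factors are grouped together, then use $z \in \sZ$ to force them to vanish---does not go through. Pushing a factor $\Sigma^{-j} s$ past a factor $\Sigma^{-i} r$ (with $s \in \sS$, $r \in \sR$) requires $\Hom(\sR, \Sigma^{i-j+1}\sS) = 0$; for $i > j$ this is a positive-degree extension group with no reason to vanish, so the reordering is obstructed. And the $\sS$-factors do not vanish: they are absorbed into the $\extn{\sS}$-approximations that distinguish cones in $\sD$ from cones in $\sZ$. The paper avoids this by not proving generation directly. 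It shows instead that $\sR$ is a \emph{right $w$-Riedtmann configuration} in $\sZ$ (via your dimension-shifting, $\Hom(z\extn{k},\sR) = 0$ gives $\Hom(\Sigma^k z,\sR) = 0$, hence $\Hom(\Sigma^k z,\sT) = 0$ and $z = 0$) and that $\extnZ{\sR}$ is \emph{covariantly finite} in $\sZ$ (take a left $\extn{\sT}$-approximation of $z$ in $\sD$, check its cocone lies in $\sZ$, and identify the resulting $\sZ$-cone as lying in $\extnZ{\sR}$), then invokes Proposition~\ref{prop:Riedtmann}. The covariant-finiteness step, and also your backward direction, hinge on the lemma $\extn{\sT} \cap \sZ = \extnZ{\sR}$ (Lemma~\ref{lem:R-filtration}), proved by induction on $\sT$-length using the $\extn{\sS}$-approximation triangles of Theorem~\ref{thm:funct-finite}; this lemma is the correct replacement for your reorder-and-vanish step and is exactly the ``peeling off $\sS$'' obstacle you flagged.
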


Our final main result says that the Calabi-Yau type of the category is preserved by our construction.

\begin{introtheorem}[Theorem~\ref{thm:Serre}]
Let $\sD$ be a Hom-finite, Krull-Schmidt, $\kk$-linear triangulated category and $w\geq 1$. Suppose $\sS$ is a $w$-orthogonal collection whose extension closure $\extn{\sS}$ is functorially finite and $\sZ$ be as above. If $\sD$ is $w$-Calabi-Yau then so is $\sZ$.
\end{introtheorem}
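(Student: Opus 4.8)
The plan is to transport Serre duality from $\sD$ to $\sZ$. Write $D = \Hom_{\kk}(-,\kk)$, write $\SSS$ for the Serre functor of $\sD$ (so the Calabi--Yau hypothesis gives $\SSS \cong \Sigma^{-w}$, i.e. bifunctorial isomorphisms $D\Hom_\sD(x,y) \cong \Hom_\sD(y,\Sigma^{-w}x)$), and let $\Sigma_\sZ$ denote the suspension functor of the triangulated category $\sZ$ constructed in the proof of Theorem~\ref{thm:triangulated}. Since $\sZ$ inherits Hom-finiteness and the Krull--Schmidt property from $\sD$, to show that $\sZ$ is $w$-Calabi--Yau it suffices to produce bifunctorial isomorphisms
\[
D\Hom_\sZ(x,y) \;\cong\; \Hom_\sZ\!\big(y,\,\Sigma_\sZ^{-w}x\big), \qquad x,y \in \sZ,
\]
that is, to show that $\Sigma_\sZ^{-w}$ is a Serre functor for $\sZ$.

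First I would identify the morphism spaces of $\sZ$ with those of $\sD$. For $x,y \in \sZ$ the defining vanishing conditions give $\Hom_\sD(x,\sS) = 0$ and $\Hom_\sD(\sS,y) = 0$, hence, by induction on extension length, $\Hom_\sD(x,\extn\sS) = 0$ and $\Hom_\sD(\extn\sS,y) = 0$. In particular no nonzero morphism between objects of $\sZ$ factors through $\extn\sS$, so $\Hom_\sZ(x,y) = \Hom_\sD(x,y)$ for all $x,y \in \sZ$, however the subfactor construction presents its morphisms. Substituting this, together with the Calabi--Yau isomorphism for $\sD$, into the display above reduces the problem to a natural isomorphism $\Hom_\sD(y,\Sigma^{-w}x) \cong \Hom_\sD(y,\Sigma_\sZ^{-w}x)$ for $x,y \in \sZ$; equivalently, $\Sigma_\sZ^{-w}x$ and $\Sigma^{-w}x$ cannot be distinguished by $\Hom_\sD(y,-)$ with $y \in \sZ$.

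The heart of the argument is this comparison of the two desuspensions, which I would carry out from the explicit construction of $\Sigma_\sZ^{\pm1}$ in Theorem~\ref{thm:triangulated}: for $z \in \sZ$ there is a triangle $\Sigma^{-1}z \to u_z \to \Sigma_\sZ^{-1}z \to z$ with $\Sigma_\sZ^{-1}z \in \sZ$ and $u_z$ lying in the extension closure of a bounded window of shifts of $\sS$ (produced as a minimal approximation). One then proves, by induction on $i = 0,1,\ldots,w$, natural isomorphisms $\Hom_\sD(\Sigma^j y,\Sigma_\sZ^{-i}x) \cong \Hom_\sD(\Sigma^j y,\Sigma^{-i}x)$ for $x,y \in \sZ$ and $j$ in a suitable range, the inductive step being obtained by applying $\Hom_\sD(\Sigma^j y,-)$ to the defining triangle of $\Sigma_\sZ^{-1}(\Sigma_\sZ^{-(i-1)}x)$; the correction term is annihilated because $\Sigma^j y$, with $y \in \sZ$, is orthogonal in degree zero to the relevant window of shifts of $\sS$ --- and here the Serre functor of $\sD$ is invoked a second time, to trade the ``$\Hom(\Sigma^i\sS,-)=0$'' half of the definition of $\sZ$ for the ``$\Hom(-,\Sigma^{-i}\sS)=0$'' half. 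Taking $i = w$ and $j = 0$ then yields the required isomorphism, and bifunctoriality follows from the functoriality of the construction of $\Sigma_\sZ$ and of Serre duality on $\sD$, exactly as in Iyama--Yoshino reduction \cite{IY}. I expect the main obstacle to be the bookkeeping of this moving window of $\sS$-shifts: one must check that over the $w$ iterations it never widens beyond width $w$, so that the correction terms remain invisible throughout --- which is exactly what the identity $\SSS \cong \Sigma^{-w}$ guarantees, and is the precise role in Theorem~\ref{thm:Serre} of the hypothesis that $\sS$ be an $\SSS_{-w}$-subcategory. (Should the proof of Theorem~\ref{thm:triangulated} already record the comparison $\Hom_\sZ(x,\Sigma_\sZ^i y) \cong \Hom_\sD(x,\Sigma^i y)$, one skips this induction and applies that identity with $i = -w$.)
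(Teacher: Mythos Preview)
Your approach is essentially the paper's: both dimension-shift along the defining triangles of $\extn{-1}$, with the $\extn\sS$ correction terms vanishing against $\Hom(y,-)$ because $y \in \sZ$. Two small corrections: $\sZ$ is a full subcategory of $\sD$ (not a subfactor), so $\Hom_\sZ = \Hom_\sD$ is automatic and the first paragraph of your argument is unnecessary; and the correction term in the defining triangle of $\extn{-1}$ lies in $\extn\sS$ itself, not in a window of shifts of $\sS$. The paper's formulation avoids your double-indexed induction on $(i,j)$ by chaining $\Hom(y,\SSS\Sigma^i x\extn{-i}) \simeq \Hom(y,\SSS\Sigma^{i-1}x\extn{-i+1})$ for $1\le i\le w$ directly, each step coming from one application of $\Hom(y,-)$ to $\SSS\Sigma^{i-1}$ applied to the right-mutation triangle $s_i \to \Sigma x\extn{-i} \to x\extn{-i+1} \to \Sigma s_i$; the required vanishings are $\Hom(y,\SSS\Sigma^j\sS) \simeq D\Hom(\Sigma^j\sS,y) = 0$ for $0\le j\le w$, a \emph{fixed} window, so the moving-window bookkeeping you anticipate never arises. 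The paper also obtains the more general statement that $\overline{\SSS}=\SSS\Sigma^w\extn{-w}$ is a Serre functor on $\sZ$ whenever $\sD$ has one, with the Calabi--Yau assertion as an immediate corollary.
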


The three theorems above tell us that our version of reduction for Calabi-Yau triangulated categories, which is compatible with simple-minded systems, is a complete analogue of the theory developed for cluster-tilting in \cite{IY}. We briefly comment on the differences: our construction is not a subfactor triangulated category. Indeed in Example~\ref{subfactor} we explain why the subfactor construction does not work in our situation in a simple example. While modelling the reduction construction on the mutation theory is the natural thing to do, the arguments and proofs are very different from those in \cite{IY}. One can think of this as manifestation of the differences observed between simple-minded collections and silting objects observed in \cite{Koenig-Yang}.

Finally, we comment on the structure of the paper. In Section~\ref{sec:background} we set up our notation and recall some key results on approximation theory. Section~\ref{sec:sms} is a formal treatment of $w$-simple-minded systems, generalising \cite{CS17, Dugas}, providing their basic properties; here one will see already the striking parallel with cluster theory. Section~\ref{sec:mutation} sets up the notion of simple-minded mutation pair which will be used for the reduction. Section~\ref{sec:pretriangulated} shows that the reduced category is pretriangulated, the proof of the Octahedral Axiom is quite involved so is deferred to Section~\ref{sec:triangulated}. Finally in Section~\ref{sec:cy-reduction} we show that this provides an inductive technique for constructing simple-minded systems. Section~\ref{sec:examples} illustrates the theory with some simple examples.

%============================================================================
%Section
\section{Background} \label{sec:background}
%============================================================================

Throughout this paper, $\kk$ denotes an algebraically closed field, and $\sD$ denotes a Hom-finite, $\kk$-linear, Krull-Schmidt triangulated category with shift functor $\Sigma \colon \sD \to \sD$. We shall assume all subcategories are full and strict.

\subsection{Pretriangulated versus triangulated categories}
Basic properties of triangulated categories can be found in \cite{Happel,Hartshorne,HJ-triang,Neeman-book}. In this paper we enumerate the axioms of triangulated categories as in \cite[Definition I.1.1]{Happel}, i.e. the four axioms will be denoted $\TRone, \TRtwo$, $\TRthree$ and $\TRfour$. 
If $(\sD,\Sigma)$ satisfy only axioms $\TRone, \TRtwo$ and $\TRthree$ then $\sD$ is called a \emph{pretriangulated category}.
The axiom $\TRfour$ is often called the \emph{Octahedral Axiom}, since there are many equivalent formulations of this axiom, see for example \cite{Hubery,Neeman}, we explicitly state the formulation we shall use in this article below. 
\begin{itemize}
\item[$\TRfour$]
Given two triangles $\trilabels{x}{y}{z}{f}{g}{h}$ and $\trilabels{y}{y'}{y''}{u}{v}{w}$ then there is a commutative diagram
\[
\xymatrix@!R=8px{
                                  & \Sigma^{-1} y'' \ar@{=}[r] \ar[d]^{-\Sigma^{-1} w} & \Sigma^{-1} y'' \ar[d]^{-\Sigma^{-1} w'}   &    \\
x \ar[r]^{f} \ar@{=}[d] & y \ar[r]^g \ar[d]^u                                                  & z \ar[r]^h \ar[d]^{u'}                                 & \Sigma x \ar@{=}[d] \\
x \ar[r]_{uf}                & y' \ar[r]_{g'} \ar[d]^v                                              & z' \ar[r]_{h'} \ar[d]^{v'}                              & \Sigma x \\
                                 & y'' \ar@{=}[r]                                                          & y''                                                            &
}
\]
in which each row and column is a triangle and $(\Sigma f) h' = wv'$.
\end{itemize}

For subcategories $\sX, \sY$ of a (pre)triangulated category $\sD$, we define 
\[
\sX * \sY = \{ d \in \sD \mid  \text{there exists a triangle } \tri{x}{d}{y} \text{ with } x \in \sX \text{ and } y \in \sY \}.
\]
 The subcategory $\sX$ is said to be {\it extension-closed} if $\sX * \sX = \sX$. We denote by $\extn{\sX}$ the {\it extension closure} of $\sX$, i.e. the smallest subcategory of $\sD$ which contains $\sX$ and is extension-closed.
 We define the \emph{right and left perpendicular categories} as follows:
 \[
 \sX\orth = \{ d \in \sD \mid \Hom(\sX,d) = 0 \}
 \text{ and }
 {}\orth \sX = \{ d \in \sD \mid \Hom(d,\sX) = 0 \},
 \]
 where $\Hom(\sX,d) = 0$ is shorthand for $\Hom(x,d) = 0$ for each object $x$ of $\sX$; likewise for $\Hom(d,\sX)$.
 
 Finally, a \emph{Serre functor} on $\sD$ is an autoequivalence $\SSS \colon \sD \to \sD$ such that there is an isomorphism,
 \[
 \Hom(x,y) \simeq D\Hom(y, \SSS x),
 \]
which is natural in $x$ and $y$, where $D$ denotes the standard vector space duality. If $\sD$ has a Serre functor it is unique up to isomorphism and $\sD$ is said to satisfy \emph{Serre duality}. For details we refer to \cite{RvdB}.

Let $w\in \bZ$. A triangulated category $\sD$ satisfying Serre duality is said to be \emph{$w$-Calabi-Yau} (\emph{$w$-CY} for short) if there is a natural isomorphism $\SSS \simeq \Sigma^w$, where $\SSS$ is the Serre functor on $\sD$. 

\subsection{Approximation theory}  
Let $\sX$ be a subcategory of $\sD$, and $d$ an object in $\sD$. A morphism $f \colon x \to d$, with $x \in \sX$, is said to be: 
\begin{enumerate}
\item a {\it right $\sX$-approximation of $d$} if $\Hom(\sX, f) \colon \Hom(\sX,x) \to \Hom(\sX,d)$ is surjective; 
\item {\it right minimal} if any $g \colon x \to x$ satisfying $f g = f$ is an automorphism;
\item a {\it minimal right $\sX$-approximation of $d$} if it is both a right $\sX$-approximation of $x$ and right minimal. 
\end{enumerate}
If every object in $\sD$ admits a right $\sX$-approximation, then $\sX$ is said to be \emph{contravariantly finite}. There are dual notions of \emph{(minimal) left $\sX$-approximations} and \emph{covariantly finite} subcategories. The subcategory $\sX$ of $\sD$ is called \emph{functorially finite} if it is both contravariantly finite and covariantly finite. 

We now collect some basic properties of approximations which will be used throughout the paper. 

\begin{lemma} \label{lem:basic-props} 
Let $\sX \subseteq \sD$ be an extension-closed subcategory and let $d \in \sD$. 
\begin{enumerate}
\item Suppose $d$ admits a right $\sX$-approximation. Then $d$ admits a minimal right $\sX$-approximation, which is unique up to isomorphism.
\item If $\alpha \colon x \rightarrow d$ is a minimal right $\sX$-approximation, then each right $\sX$-approximation of $d$ is, up to isomorphism, of the form $\begin{pmat} \alpha & 0 \end{pmat} \colon x \oplus x^\prime \rightarrow d$.
\item If $\alpha \colon x \rightarrow d$ is a minimal right $\sX$-approximation and $\beta \colon x^\prime \rightarrow d$ is a right $\sX$-approximation, then $x$ is a summand of $x^\prime$.
\item If $\alpha \colon x \rightarrow d$ is a right $\sX$-approximation and $\trilabels{x}{d}{y}{\alpha}{\beta}{}$ is its completion to a distinguished triangle, then $y \in \sX\orth$ and $\beta \colon d \rightarrow y$ is a left $(\sX \orth)$-approximation of $d$.
\item If $\beta \colon d \rightarrow x$ is a left $\sX$-approximation and $\trilabels{z}{d}{x}{\alpha}{\beta}{}$ is its completion to a distinguished triangle, then $z \in {}\orth \sX$ and $\alpha \colon z \rightarrow d$ is a right $({}\orth \sX)$-approximation of $d$.
\end{enumerate}
\end{lemma}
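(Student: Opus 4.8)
The plan is to treat these as the triangulated incarnations of the usual approximation‑theoretic facts, proving (1)--(3) by Krull--Schmidt bookkeeping and (4)--(5) by a Wakamatsu‑type argument combined with the long exact sequences of the approximation triangle. I will use throughout that subcategories in our sense are closed under direct summands, and in (4)--(5) that the approximation in question is right (resp.\ left) minimal.

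For (1), I would start from any right $\sX$-approximation $f\colon x\to d$; since $\operatorname{End}(x)$ is a finite-dimensional $\kk$-algebra, a Fitting-type argument splits $x\cong x_0\oplus x_1$ so that $f\cong\begin{pmat}\alpha&0\end{pmat}$ with $\alpha\colon x_0\to d$ right minimal. As $x\in\sX$ and $x_0$ is a summand of $x$ we get $x_0\in\sX$, and $\alpha$ is still a right $\sX$-approximation since a zero summand contributes nothing to $\Hom(\sX,f)$. Uniqueness is the standard argument: two minimal right $\sX$-approximations $\alpha,\alpha'$ factor through each other ($\alpha'g=\alpha$, $\alpha h=\alpha'$), so $\alpha hg=\alpha$ and $\alpha'gh=\alpha'$ force $hg$ and $gh$ to be automorphisms, whence $g$ is invertible. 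For (2), given $\alpha\colon x\to d$ minimal and $\beta\colon x'\to d$ any right $\sX$-approximation, pick $\gamma\colon x'\to x$, $\delta\colon x\to x'$ with $\alpha\gamma=\beta$, $\beta\delta=\alpha$; then $\alpha\gamma\delta=\alpha$, so $\gamma\delta$ is an automorphism, $\delta$ is a split monomorphism, and $x'\cong x\oplus x''$ with $\beta\cong\begin{pmat}\alpha&\beta''\end{pmat}$. Since $x''\in\sX$, the component $\beta''$ factors through $\alpha$, so precomposing $\beta$ with a suitable unipotent automorphism of $x\oplus x''$ replaces $\beta''$ by $0$. Part (3) is then immediate, as $x'\cong x\oplus x''$.

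For (4), complete $\alpha$ to a triangle $\trilabels{x}{d}{y}{\alpha}{\beta}{\gamma}$. That $\beta$ is a left $(\sX\orth)$-approximation is the formal half: any $\mu\colon d\to y'$ with $y'\in\sX\orth$ has $\mu\alpha=0$ (a morphism from $\sX$ into $\sX\orth$), so by exactness of $\Hom(y,y')\to\Hom(d,y')\to\Hom(x,y')$ it factors through $\beta$. For $y\in\sX\orth$, fix $x_0\in\sX$ and $g\colon x_0\to y$, and fix a triangle $\trilabels{x}{e}{x_0}{a}{b}{\gamma g}$; extension-closure of $\sX$ gives $e\in\sX*\sX=\sX$. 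Since $\Sigma^{-1}(\gamma g)=\Sigma^{-1}\gamma\circ\Sigma^{-1}g$ and $\alpha\circ\Sigma^{-1}\gamma=0$ (consecutive morphisms in a rotation of $\trilabels{x}{d}{y}{\alpha}{\beta}{\gamma}$), we get $\alpha\circ\Sigma^{-1}(\gamma g)=0$, so $\alpha$ factors through $a$, say $\alpha=qa$ with $q\colon e\to d$. As $e\in\sX$, $q$ factors as $\alpha r$ with $r\colon e\to x$; then $\alpha=\alpha ra$, so $ra$ is an automorphism of $x$ by right minimality of $\alpha$, hence $a$ is a split monomorphism. Therefore this triangle splits, forcing $\gamma g=0$; then $g$ factors through $\beta$ (long exact sequence again), and being a morphism out of $x_0\in\sX$ it factors through $\alpha$, so $g=0$ since $\beta\alpha=0$. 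Part (5) I would obtain as the dual of (4), applied in $\sD^{\mathrm{op}}$.

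The only step that is not routine chasing of long exact sequences, and hence the main obstacle, is ``$y\in\sX\orth$'' in (4). Its mechanism is: build the auxiliary triangle $\trilabels{x}{e}{x_0}{a}{b}{\gamma g}$ so that extension-closure of $\sX$ forces $e$ back into $\sX$, which lets the approximation property factor $q$ through $\alpha$; then use right minimality of $\alpha$ to upgrade this factorization to the splitting of $a$, which is exactly what makes the obstruction $\gamma g$ vanish. Keeping track of the signs in the rotations is the only place real care is needed.
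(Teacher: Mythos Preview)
Your argument is correct and considerably more detailed than the paper's own treatment, which simply defers to the literature: (1)--(3) to Auslander--Smal\o\ and (4)--(5), the triangulated Wakamatsu lemma, to Iyama--Yoshino and J\o rgensen. Your line for (4) (build the auxiliary extension $e\in\sX*\sX=\sX$, factor through the approximation, then use minimality to force the splitting) is exactly the standard proof found in those references.

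Two remarks on the extra hypotheses you introduce. First, closure of $\sX$ under direct summands is indeed needed for (1)--(3) as stated; without it one cannot guarantee that the Fitting summand $x_0$ lies in $\sX$, and a minimal right $\sX$-approximation need not exist. In the paper's applications this is harmless, since $\sX=\extn{\sS}$ is closed under summands by Lemma~\ref{lem:Dugas}(3). Second, and more interestingly, right-minimality of $\alpha$ is genuinely required for (4), and the lemma as literally stated in the paper is false without it: take $\sX=\sD$ and any nonzero $d$; then $\begin{pmat}1 & 0\end{pmat}\colon d\oplus d\to d$ is a right $\sD$-approximation with cone $\Sigma d\notin\sD\orth=0$. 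Your proof uses minimality exactly once, at the step $\alpha(ra)=\alpha\Rightarrow ra$ is an automorphism, and this is unavoidable. So your care here is warranted and sharper than the paper's formulation.
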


\begin{proof}
The first three statements are well-known and straightforward, see for example \cite{AS}. The final two are known as the Wakamatsu lemma for triangulated categories; see \cite[Section 2]{IY} or \cite[Lemma 2.1]{J}, for example.
\end{proof}

%========================================================================================
% SECTION
\section{Simple-minded systems} \label{sec:sms}
%========================================================================================

Simple-minded systems were introduced in \cite{Koenig-Liu} (see also \cite{Po}) and generalised to $w$-simple-minded systems for $w \geq 1$ in \cite{CS17}.
We start by reviewing the definitions and basic properties of these concepts.

Given a collection of objects $\sX$ in $\sD$, we denote by $\sX^\oplus$ the smallest subcategory of $\sD$ containing $\sX$ and closed under direct summands. We will also use the following notation: $(\sX)_1 \coloneqq \sX$ and $(\sX)_n \coloneqq (\sX * (\sX)_{n-1})^\oplus$, for $n \geq 2$. 

\begin{definition}\label{def:w-sms}
Let $w \geq 1$. A collection of objects $\sS$ in $\sD$ is \textit{$w$-orthogonal} if
\begin{enumerate}
\item $\dim \Hom_\sD(x,y) = \delta_{xy}$, for every $x, y \in \sS$;
\item If $w \geq 2$, $\Hom(\Sigma^k x,y) = 0$, for $1 \leq k \leq w-1$ and $x, y \in \sS$;
\end{enumerate}
A $w$-orthogonal collection $\sS$ is a \emph{$w$-simple minded system} if additionally,
\begin{enumerate}[resume]
\item $\sD = \add(\extn{\sS, \Sigma^{-1} \sS, \ldots,\Sigma^{1-w} \sS})$.
\end{enumerate}
\end{definition}

We recall the following definition from \cite{CS15}, which was inspired by \cite{Riedtmann}.

\begin{definition}
Let $w \geq 1$. A $w$-orthogonal collection $\sS$ is called a \emph{left (resp. right) $w$-Riedtmann configuration} if for each $d$ in $\sD$ with $\Hom(\Sigma^k s, d) = 0$ (resp. $\Hom(\Sigma^k d, s) = 0$) for each $s \in \sS$ and $0 \leq k \leq w-1$ then $d = 0$. 
A $w$-orthogonal collection $\sS$ is a \emph{$w$-Riedtmann configuration} if it is both a left and right $w$-Riedtmann configuration.
\end{definition}

A $1$-orthogonal collection of objects will be referred to as simply an \emph{orthogonal collection}. An orthogonal collection is called a \emph{system of orthogonal bricks} in \cite{Po}, a \emph{set of (pairwise) orthogonal bricks} in \cite{Dugas} and a \emph{semibrick} in \cite{Asai}.  
We now recall some basic properties of orthogonal collections from \cite{Dugas}.

\begin{lemma} \label{lem:Dugas}
Let $\sS$ be an orthogonal collection in $\sD$. Then the following assertions hold.
\begin{enumerate}
\item (\cite[Lemma 2.3]{Dugas}) $\extn{\sS} = \bigcup_{n \geq 1} (\sS)_n$.
\item (\cite[dual of Lemma 2.6]{Dugas})  If there is a triangle $\trilabel{s}{x}{y}{\sigma}$ with $s \in \sS$, $x \in (\sS)_n$ and $\sigma \neq 0$, then $y \in (\sS)_{n-1}$.
\item (\cite[Lemma 2.7]{Dugas}) $(\sS)_n$ is closed under direct summands for each $n \geq 1$. In particular $\extn{\sS}$ is closed under direct summands.
\end{enumerate} 
\end{lemma}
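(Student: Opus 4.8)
The plan is to derive all three assertions from a single technical input — the shortening lemma~(2) — together with the fact that $\extn{\sS}$ is closed under direct summands. Note first how the three parts fit together: the first clause of~(3) is immediate, since $(\sS)_n$ is by definition a summand closure; its second clause then follows from~(1), because an increasing union of summand-closed subcategories is summand-closed; and~(1) is itself equivalent to the summand-closure of $\extn{\sS}$. Indeed, $\extn{\sS}\subseteq\bigcup_{n\geq1}(\sS)_n$ is the standard fact that an extension closure is the union of its iterated self-extensions, which follows from the associativity $(\sX*\sY)*\sZ=\sX*(\sY*\sZ)$ of $*$ (an immediate consequence of $\TRfour$) together with the containment $(\sS)_m*(\sS)_n\subseteq(\sS)_{m+n}$; the latter I would prove by induction on $m$, absorbing the summand closures in the definition of $(\sS)_k$ by forming a direct sum with an identity morphism and applying $\TRfour$. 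Since $\bigcup_n(\sS)_n$ is then extension-closed and contains $\sS$, the reverse inclusion $\bigcup_n(\sS)_n\subseteq\extn{\sS}$ holds precisely when $\extn{\sS}$ is summand-closed. So it remains to prove~(2) and the summand-closure of $\extn{\sS}$.

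For~(2) I would induct on $n$, taking $(\sS)_0$ to be the zero subcategory. When $n=1$, $s,x\in\sS$ and $0\neq\sigma\in\Hom(s,x)$, so $\dim\Hom(s,x)=\delta_{sx}$ forces $\sigma$ to be a scalar multiple of an isomorphism and $y=0$. For $n\geq2$, since $(\sS)_n=(\sS*(\sS)_{n-1})^\oplus$ we may — after replacing $\trilabel{s}{x}{y}{\sigma}$ by its direct sum with an identity morphism and using that $(\sS)_{n-1}$ is summand-closed — assume $x\in\sS*(\sS)_{n-1}$, so that there is a triangle $\trilabel{s_0}{x}{x_1}{\iota}$ with $s_0\in\sS$, $x_1\in(\sS)_{n-1}$, with middle morphism $p$. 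If $p\sigma=0$ then $\sigma$ factors as $\iota\sigma'$ with $0\neq\sigma'\colon s\to s_0$, which is an isomorphism by the case $n=1$; hence the triangle on $\sigma$ is isomorphic to that on $\iota$ and $y\cong x_1\in(\sS)_{n-1}$. If $p\sigma\neq0$, apply $\TRfour$ to $s\stackrel{\sigma}{\to}x\stackrel{p}{\to}x_1$: the cones of $\sigma$, $p$ and $p\sigma$ are $y$, $\Sigma s_0$ and, say, $y_1$, and the octahedral triangle becomes (after rotation) $s_0\to y\to y_1\to\Sigma s_0$; the induction hypothesis applied to $s\stackrel{p\sigma}{\to}x_1\to y_1\to\Sigma s$ gives $y_1\in(\sS)_{n-2}$, whence $y\in\sS*(\sS)_{n-2}\subseteq(\sS)_{n-1}$.

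For the summand-closure of $\extn{\sS}$ I would show, by induction on $n$, that every direct summand of an object of $(\sS)_n$ again lies in $(\sS)_n$. The case $n=1$ is the indecomposability of the objects of $\sS$, which have endomorphism ring $\kk$. For $n\geq2$, given a nontrivial decomposition $Y=Y_1\oplus Y_2$ in $(\sS)_n$, one reduces as in the proof of~(2) to a triangle $s\stackrel{\alpha}{\to}Y\stackrel{\beta}{\to}C\to\Sigma s$ with $s\in\sS$ and $C\in(\sS)_{n-1}$; discarding contractible direct summands of this triangle (whose contribution to $Y$ is a summand of $C$, hence in $(\sS)_{n-1}$ by induction) and treating the split case directly via Krull--Schmidt, we may assume the triangle is minimal, which forces the components $\alpha_1,\alpha_2$ of $\alpha$ to be nonzero. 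If $\alpha_1$ is a split monomorphism then minimality forces $Y_1\cong s\in\sS$; otherwise one forms the octahedron on $s\stackrel{\alpha}{\to}Y\to Y_1$, obtaining a triangle $\trilabel{s}{Y_1}{F}{\alpha_1}$ together with a triangle $\tri{Y_2}{C}{F}$, so that $Y_1\in(\sS)_n$ would follow once $F\in(\sS)_{n-1}$ is established. Proving the latter — by a further octahedral argument together with the shortening lemma~(2), with book-keeping via Krull--Schmidt — is the crux of the statement and the point at which orthogonality of $\sS$ is used beyond the indecomposability of its objects; it is the step I expect to require the most care. Once $\extn{\sS}$ is known to be summand-closed, assertions~(1) and~(3) follow as explained.
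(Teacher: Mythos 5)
The paper gives no argument for this lemma at all --- it is quoted verbatim from \cite{Dugas} (Lemmas 2.3, 2.6 and 2.7 there) --- so the only meaningful comparison is with Dugas's proofs, and with whether your blind attempt is complete. Much of it is sound: your proof of (2), by induction on $n$, absorbing the summand closure by adding an identity triangle, and splitting on whether the composite $p\sigma$ vanishes (using that a nonzero map between objects of $\sS$ is an isomorphism), is correct; so is the bookkeeping showing $(\sS)_m * (\sS)_n \subseteq (\sS)_{m+n}$ via associativity of $*$ and the absorption $\sX^\oplus * \sY^\oplus \subseteq (\sX * \sY)^\oplus$, and hence that (1) and (3) reduce, with the conventions of this paper, to the closure of $\extn{\sS}$ under direct summands.

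The problem is that the proposal stops exactly where the real content lies. Closure under direct summands is the substance of \cite[Lemma 2.7]{Dugas}, and --- as you yourself observe --- it is the only point where orthogonality of $\sS$ is used beyond indecomposability of its objects; yet you reduce it to the assertion that the cone $F$ of $\alpha_1 \colon s \to Y_1$ lies in $(\sS)_{n-1}$ and then explicitly defer that step ("the crux \dots the step I expect to require the most care") without proving it. As written, this step cannot be dispatched with the tools already on the table: applying the shortening lemma (2) to $\alpha_1$ presupposes that $Y_1$ already lies in some $(\sS)_m$, which is precisely what is being proven, and the alternative description of $F$ as the cone of the induced map $Y_2 \to C$ supplied by your octahedron has the same circularity on the $Y_2$ side. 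What is required is a genuinely new induction --- in Dugas's argument, roughly, on the $\sS$-length of $C$, analysing the components of $Y_2 \to C$ against an $\sS$-composition series of $C$, with Krull--Schmidt and the vanishing $\Hom(s',s'') = 0$ for non-isomorphic $s',s'' \in \sS$ doing the work --- and none of it is supplied. (A smaller slip: if $\alpha_1$ is a split monomorphism then so is $\alpha$, so after discarding contractible summands that case does not arise at all, rather than forcing $Y_1 \simeq s$.) So the attempt establishes (2) and the logical architecture, but (1) and (3) --- the summand-closure statements --- are left unproven, which is a genuine gap.
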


In light of Lemma~\ref{lem:Dugas} the following definition makes sense.

\begin{definition}[{\cite[Def. 2.5]{Dugas}}] \label{def:length}
Let $\sS$ be an orthogonal collection in $\sD$. Let $x \in \extn{\sS}$. The \emph{$\sS$-length} of $x$ is the minimum natural number $n$ such that $x \in (\sS)_n$.
In particular, this means that there is an \emph{$\sS$-composition series},
\begin{align*}
& \tri{s_1}{x_2}{s_2} \\
& \tri{x_2}{x_3}{s_3} \\
& \hspace{1.75cm} \vdots  \\
& \tri{x_{n-1}}{x}{s_n},
\end{align*}
with $s_i \in \sS$ and $x_i \in (\sS)_i$ for $i = 2,\ldots,n-1$.
\end{definition}

We recall the following definition given in \cite{IY}.

\begin{definition} \label{def:Sw-subcat}
Let $w \in \bZ$, and assume $\sD$ has a Serre duality $\SSS$. Write $\SSS_w = \SSS \Sigma^{-w}$. A subcategory $\sX$ of $\sD$ is said to be an {\it $\SSS_{w}$-subcategory} of $\sD$ if $\sX =  \SSS_w \sX = \SSS^{-1}_w \sX$. 
\end{definition}

Suppose that $\sS$ is an orthogonal collection in $\sD$ such that $\extn{\sS}$ is functorially finite. In particular, by \cite[Proposition 2.3]{IY}, there are torsion pairs $(\extn{\sS},\sS\orth)$ and $({}\orth \sS, \extn{\sS})$ in $\sD$. The following lemma is a useful generalisation of \cite[Lemma 4.7]{Dugas}; there is also a corresponding generalisation of its dual \cite[Lemma 4.6]{Dugas}.

\begin{lemma} \label{lem:torsion-pair}
Let $d \in \sD$. Suppose $\sS$ is an orthogonal collection in $\sD$ such that $\extn{\sS}$ is functorially finite. Let $\trilabels{s_d}{d}{z_d}{f}{g}{}$ be a decomposition triangle with respect to the torsion pair $(\extn{\sS},\sS\orth)$, in which $f$ and $g$ are minimal right and left approximations, respectively (cf. \cite[Lemma 3.2]{Dugas}).
\begin{enumerate}
\item The map $\Hom(\sS,f) \colon \Hom(\sS, s_d) \to \Hom(\sS, d)$ is an isomorphism.
\end{enumerate}
Suppose further that $w \geq 1$ and $\sS$ is an $\SSS_{-w}$-subcategory of $\sD$. 
\begin{enumerate}[resume]
\item The map $\Hom(\Sigma^{w-1}g, \sS) \colon \Hom(\Sigma^{w-1} z_d, \sS) \to \Hom(\Sigma^{w-1} d, \sS)$ is a monomorphism if and only if $\Hom(\sS,f)$ is an isomorphism. 
\item If $d \in {}\orth (\Sigma^{1-w} \sS)$ then $z_d \in {}\orth (\Sigma^{1-w} \sS)$.
\end{enumerate}
\end{lemma}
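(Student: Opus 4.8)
The plan is to prove the three statements in order, relying on the torsion pair $(\extn{\sS},\sS\orth)$ coming from \cite[Proposition 2.3]{IY} together with the Wakamatsu-type properties recorded in Lemma~\ref{lem:basic-props}. For (1), I start from the decomposition triangle $\trilabels{s_d}{d}{z_d}{f}{g}{}$ with $s_d \in \extn{\sS}$ and $z_d \in \sS\orth$. Applying $\Hom(\sS,-)$ and using that $z_d \in \sS\orth$ gives $\Hom(\Sigma^{-1}z_d,\sS)=\Hom(z_d,\sS)=0$ — wait, more precisely $\Hom(\sS,z_d)=0$ and I need also $\Hom(\sS,\Sigma^{-1}z_d)=0$; this is where minimality of $f$ as a right $\extn{\sS}$-approximation enters. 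In fact $f$ being a right $\extn{\sS}$-approximation already makes $\Hom(\sS,f)$ surjective, and $\Hom(\sS,\Sigma^{-1}z_d)=0$ gives injectivity, so the issue reduces to checking $\Hom(\sS,\Sigma^{-1}z_d)=0$. For this I use that $z_d \in \sS\orth$ and the long exact sequence, plus the minimality of $f$: if $\Hom(\sS,\Sigma^{-1}z_d)\neq 0$ one produces a nonzero map factoring through the connecting morphism which contradicts minimality of $f$ (this is essentially \cite[Lemma 4.7]{Dugas} and I would cite that argument's structure). So (1) is largely bookkeeping with long exact sequences.

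For (2), the bridge between the Hom-vanishing on the left and on the right is Serre duality together with the hypothesis that $\sS$ is an $\SSS_{-w}$-subcategory, i.e. $\SSS\Sigma^{w}\sS = \sS$. Concretely, $\Hom(\Sigma^{w-1}d,\sS) \cong D\Hom(\sS, \SSS\Sigma^{w-1}d) = D\Hom(\sS,\SSS\Sigma^{w}\Sigma^{-1}d) = D\Hom(\SSS\Sigma^w(-)\text{-twist})$; using $\SSS\Sigma^w\sS=\sS$ this becomes $D\Hom(\Sigma^{-1}d, \sS')$ for $\sS'$ ranging over $\sS$ again, hence essentially $D\Hom(\sS,\Sigma^{-1}d)$ up to relabelling the collection. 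Applying the same to $z_d$, the map $\Hom(\Sigma^{w-1}g,\sS)$ becomes, under these dualities, the dual of $\Hom(\sS,\Sigma^{-1}g)\colon \Hom(\sS,\Sigma^{-1}s_d)\to\Hom(\sS,\Sigma^{-1}d)$ — no, more carefully it becomes the dual of a shift of $\Hom(\sS,f)$-type map. I would set up the long exact sequence from the triangle, apply $\Hom(\Sigma^{w-1}(-),\sS)$, and identify the relevant segment with the $D$-dual of the segment controlled by $\Hom(\sS,f)$; then "monomorphism" on one side corresponds exactly to "isomorphism" (equivalently, the relevant connecting map vanishing) on the other. The key point is that $s_d\in\extn{\sS}$ forces $\Hom(\Sigma^{w-1}s_d,\sS)$-type groups to vanish in the right degrees because $\sS$ is $w$-orthogonal, so the only obstruction is precisely the failure of $\Hom(\sS,f)$ to be iso.

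For (3), suppose $d\in{}\orth(\Sigma^{1-w}\sS)$, i.e. $\Hom(d,\Sigma^{1-w}\sS)=0$. I want $\Hom(z_d,\Sigma^{1-w}\sS)=0$. Rotating the decomposition triangle to $\trilabels{z_d}{\Sigma s_d}{\Sigma d}{}{}{}$ — rather, using $\Sigma^{-1}z_d \to s_d \to d \to z_d$ — and applying $\Hom(-,\Sigma^{1-w}\sS)$, the group $\Hom(z_d,\Sigma^{1-w}\sS)$ sits between $\Hom(\Sigma^{-1}d,\Sigma^{1-w}\sS)\cong\Hom(d,\Sigma^{2-w}\sS)$ and $\Hom(s_d,\Sigma^{1-w}\sS)$. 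The first vanishes if $w\geq 2$ by $w$-orthogonality-type reasoning once I know $d$'s approximation structure — actually I should instead use the hypothesis directly and the vanishing $\Hom(\Sigma^{-1}s_d,\Sigma^{1-w}\sS)=\Hom(s_d,\Sigma^{2-w}\sS)$. Since $s_d\in\extn{\sS}$, an induction on $\sS$-length (Definition~\ref{def:length}, using the $\sS$-composition series and the $w$-orthogonality conditions (1)–(2) of Definition~\ref{def:w-sms}) shows $\Hom(s_d,\Sigma^{j}\sS)=0$ for $j=2-w,\dots,1-w$ when $w\geq 2$; combined with $\Hom(d,\Sigma^{1-w}\sS)=0$ and the long exact sequence this squeezes $\Hom(z_d,\Sigma^{1-w}\sS)=0$. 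The main obstacle I anticipate is getting the degree shifts and the direction of Serre duality exactly right in step (2) — it is easy to be off by a sign or a shift in identifying $\Hom(\Sigma^{w-1}g,\sS)$ with the dual of the map governed by $\Hom(\sS,f)$, and one must be careful that the $\SSS_{-w}$-subcategory hypothesis is $\SSS\Sigma^{w}\sS=\sS$ (not $\Sigma^{-w}$), so I would double-check this identification against the $w=1$ case recovering \cite[Lemma 4.7]{Dugas} before trusting the general statement.
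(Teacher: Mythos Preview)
Your approach to (1) is fine and matches the paper. However, there is a genuine gap in your treatment of (2) and (3): you repeatedly invoke $w$-orthogonality of $\sS$, but the lemma only assumes that $\sS$ is an \emph{orthogonal} (i.e.\ $1$-orthogonal) collection together with the $\SSS_{-w}$-subcategory condition. There is no hypothesis that $\Hom(\Sigma^k \sS, \sS) = 0$ for $1 \leq k \leq w-1$, so your vanishing claims for ``$\Hom(\Sigma^{w-1}s_d,\sS)$-type groups'' in (2) and for $\Hom(s_d,\Sigma^j\sS)$ in (3) are unsupported.

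The paper's argument avoids this entirely. For (2), one applies $\Hom(\sS,-)$ (not $\Hom(\Sigma^{w-1}(-),\sS)$) to the decomposition triangle, obtaining
\[
\Hom(\Sigma\sS, d) \xrightarrow{(\Sigma\sS,g)} \Hom(\Sigma\sS, z_d) \to \Hom(\sS, s_d) \xrightarrow{(\sS,f)} \Hom(\sS,d),
\]
so that $\Hom(\Sigma\sS,g)$ is surjective if and only if $\Hom(\sS,f)$ is an isomorphism. Then Serre duality together with $\SSS_{-w}\sS = \sS$ (hence $\SSS\Sigma\sS = \Sigma^{1-w}\sS$) gives a natural identification $D\Hom(-,\Sigma^{1-w}\sS) \simeq \Hom(\Sigma\sS,-)$, whence $\Hom(g,\Sigma^{1-w}\sS)$ is injective if and only if $\Hom(\Sigma\sS,g)$ is surjective. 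No orthogonality beyond the $1$-orthogonal condition is used. Statement (3) is then \emph{immediate} from (1) and (2): by (1) the map $\Hom(\sS,f)$ is always an isomorphism, so by (2) the map $\Hom(\Sigma^{w-1}g,\sS)$ is always injective; if its target $\Hom(\Sigma^{w-1}d,\sS)$ vanishes, so does its source. Your direct long-exact-sequence attack on (3) cannot work as written, since the obstructing term is $\Hom(\Sigma s_d,\Sigma^{1-w}\sS) = \Hom(s_d,\Sigma^{-w}\sS)$, which does not vanish even if one were granted $w$-orthogonality.
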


\begin{proof}
The first statement is \cite[Lemma 4.7(a)]{Dugas}; see \cite[Lemma 4.6(a)]{Dugas} for a proof in the dual case. Note that the proof in \cite[Lemma 4.6(a)]{Dugas} requires only that $\sS$ is an orthogonal collection such that $\extn{\sS}$ is functorially finite: the other blanket assumptions in that section are not used in the argument.

The second statement is essentially \cite[Lemma 4.7(b)]{Dugas}. However, since the argument in \cite{Dugas} is formulated for the case when $w = 1$, we give a brief sketch of the adaptations. First, applying the functor $\Hom(\sS,-)$ to the decomposition triangle gives the long exact sequence, where $(\sS,f) = \Hom(\sS,f)$,
\[
\xymatrix{
\Hom(\Sigma \sS, d) \ar[r]^-{(\Sigma \sS, g)} & \Hom(\Sigma \sS, z_d) \ar[r] & \Hom(\sS, s_d) \ar@{->>}[r]^-{(\sS,f)} & \Hom(\sS, d).
}
\]
We know that $\Hom(\sS,f)$ is surjective because $f \colon s_d \to d$ is a right $\extn{\sS}$-approximation. Therefore, $\Hom(\Sigma \sS,g)$ is surjective if and only if $\Hom(\sS,f)$ is an isomorphism.

We claim that $\Hom(g, \Sigma^{-w+1} \sS)$ is injective if and only if $\Hom(\Sigma \sS,g)$ is surjective.
Note that $\Hom(g, \Sigma^{-w+1} \sS)$ is injective if and only if $D \Hom(g, \Sigma^{-w+1} \sS)$ is surjective. Using the fact that $\sS$ is an $\SSS_{-w}$-subcategory, Serre duality gives the following commutative diagram, which establishes the claim.
\[
\xymatrix{
D \Hom(d,\Sigma^{-w+1} \sS) \ar[d]^-{\sim} \ar[rrr]^-{D \Hom(g,\Sigma^{-w+1} \sS)} & & & D \Hom(z_d, \Sigma^{-w+1} \sS) \ar[d]^-{\sim} \\
\Hom(\Sigma \sS, d) \ar[rrr]_-{\Hom(\Sigma \sS, g)}                                                   & & & \Hom(\Sigma \sS, z_d) 
} 
\]
The final statement is immediate from the second statement.
\end{proof}

The following observation will be useful later.

\begin{lemma}\label{lem:reverseorderextension}
Let $w \geq 2$. If $\sS$ is a $w$-orthogonal collection then $\extn{\sS} * \Sigma^i \extn{\sS} \subseteq \Sigma^i \extn{\sS} * \extn{\sS}$ for $0 < i < w$. 
\end{lemma}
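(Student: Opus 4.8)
The plan is to separate the range $2 \le i \le w-1$, where the statement is immediate, from the case $i=1$, which needs an induction on $\sS$-length. Throughout I would use the following consequence of $w$-orthogonality: since $\Hom(\Sigma^k \sS, \sS) = 0$ for $1 \le k \le w-1$ and the vanishing locus of a cohomological functor is closed under extensions, applying $\Hom(\Sigma^k\sS,-)$ and then $\Hom(\Sigma^k(-),\extn{\sS})$ to triangles yields $\Hom(\Sigma^k\extn{\sS},\extn{\sS}) = 0$ for $1 \le k \le w-1$. I would also use that $*$ is associative, monotone in each variable, and compatible with $\Sigma$ (all standard; associativity being a consequence of $\TRfour$), and that $\extn{\sS}$, hence $\Sigma^i\extn{\sS}$, is extension-closed. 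With this in hand the range $2 \le i \le w-1$ is settled at once: if $d$ sits in a triangle $a \to d \to b \to \Sigma a$ with $a \in \extn{\sS}$ and $b = \Sigma^i b'$, $b' \in \extn{\sS}$, then its connecting morphism lies in $\Hom(b,\Sigma a) \cong \Hom(\Sigma^{i-1}b',a)$, which is $0$ by the vanishing above since $1 \le i-1 \le w-1$; so the triangle splits, $d \cong a \oplus b$, and the split triangle $b \to a\oplus b \to a \to \Sigma b$ witnesses $d \in \Sigma^i\extn{\sS}*\extn{\sS}$.

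For $i = 1$ I would prove by induction on $m$ that any $d$ admitting a triangle $a \to d \to \Sigma b' \to \Sigma a$ with $a \in \extn{\sS}$ and $b' \in \extn{\sS}$ of $\sS$-length at most $m$ lies in $\Sigma\extn{\sS}*\extn{\sS}$; since every object of $\extn{\sS}$ has finite $\sS$-length, this yields the claim. The base case is $b' \in (\sS)_1 = \sS$, or $b'=0$ (trivial): the connecting morphism has the form $\Sigma\phi$ with $\phi\colon s \to a$, $s \in \sS$; if $\phi = 0$ the triangle splits as above, and if $\phi \ne 0$ then rotating the triangle exhibits $d$ as the third term of a triangle $s \to a \to d \to \Sigma s$ with nonzero first morphism, whence $d \in \extn{\sS}$ by Lemma~\ref{lem:Dugas}(2). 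For $m \ge 2$, the last triangle of an $\sS$-composition series of $b'$ (Definition~\ref{def:length}) reads $b_1' \to b' \to s \to \Sigma b_1'$ with $s \in \sS$ and $b_1' \in (\sS)_{m-1}$, so $\Sigma b'$ sits in a triangle $\Sigma b_1' \to \Sigma b' \to \Sigma s \to \Sigma^2 b_1'$; combining this with the triangle defining $d$ via the octahedral axiom produces an object $e$ sitting in triangles $a \to e \to \Sigma b_1' \to \Sigma a$ and $e \to d \to \Sigma s \to \Sigma e$. The inductive hypothesis applied to the first triangle gives $e \in \Sigma\extn{\sS}*\extn{\sS}$, and then two more applications of the octahedral axiom --- one to move the resulting $\extn{\sS}$-layer of $e$ past the outer $\Sigma s$ (allowed by the base case, since $s$ is a single object of $\sS$), and one to amalgamate the two $\Sigma\extn{\sS}$-layers using that $\Sigma\extn{\sS}$ is extension-closed --- give $d \in \Sigma\extn{\sS}*\extn{\sS}$. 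In the language of the $*$-operation this is just the chain $d \in \extn{\sS}*\bigl(\{\Sigma b_1'\}*\{\Sigma s\}\bigr) = \bigl(\extn{\sS}*\{\Sigma b_1'\}\bigr)*\{\Sigma s\} \subseteq \bigl(\Sigma\extn{\sS}*\extn{\sS}\bigr)*\{\Sigma s\} = \Sigma\extn{\sS}*\bigl(\extn{\sS}*\{\Sigma s\}\bigr) \subseteq \Sigma\extn{\sS}*\bigl(\Sigma\extn{\sS}*\extn{\sS}\bigr) = \Sigma\extn{\sS}*\extn{\sS}$, whose two inclusions are the inductive hypothesis and the base case. This closes the induction.

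The step I expect to be the main obstacle is the $i=1$ base case. For $i \ge 2$ the relevant connecting morphism is forced to vanish by $w$-orthogonality, so the triangle splits and nothing needs proving; but for $i=1$ it need not vanish, and what rescues the argument --- and then propagates through the induction via the composition-series peeling --- is Lemma~\ref{lem:Dugas}(2), that the cone of a nonzero morphism out of an object of $\sS$ lowers the $\sS$-length by one. The remaining care is bookkeeping with the octahedral axiom (equivalently, re-association of $*$) and keeping the peeling honest, so that a single object of $\sS$, not merely an object of $\add(\sS)$, sits as the third term at the moment the base case is invoked.
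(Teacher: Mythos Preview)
Your proof is correct, but it takes a genuinely different route from the paper's. The paper treats all $0 < i < w$ uniformly and performs a double induction on $\sS$-length: first it shows $(\sS)_n * \Sigma^i \sS \subseteq \Sigma^i \sS * \extn{\sS}$ by induction on the length of the \emph{left} factor (the base case $\sS * \Sigma^i \sS$ uses that a morphism in $\Hom(\Sigma^{i-1}\sS,\sS)$ is zero or an isomorphism, so $d$ is a direct sum or zero), and then it inducts on the length of the \emph{right} factor to obtain $\extn{\sS} * \Sigma^i (\sS)_n \subseteq \Sigma^i \extn{\sS} * \extn{\sS}$. You instead split off $i \geq 2$, where the vanishing $\Hom(\Sigma^{i-1}\extn{\sS},\extn{\sS}) = 0$ forces every triangle to split and no induction is needed at all, and for $i = 1$ you run a single induction on the right factor only, using Lemma~\ref{lem:Dugas}(2) to absorb the base case directly into $\extn{\sS}$. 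Your argument is shorter and avoids one layer of octahedral diagrams; the paper's has the advantage of not needing the case split and of not invoking Lemma~\ref{lem:Dugas}(2), relying only on the brick property at the level of single objects of $\sS$.
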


In general the inclusion of Lemma~\ref{lem:reverseorderextension} is strict. 

\begin{proof} 
By Lemma~\ref{lem:Dugas}, $\extn{\sS} = \bigcup_{n \geq 0} (\sS)_n$. Let $0 < i < w$. First, we prove, by induction on $n$, that $\extn{\sS} * \Sigma^i \sS = \Sigma^i \sS * \extn{\sS}$. Let $d \in \sS * \Sigma^i \sS$. We have a triangle $\trilabels{s'}{d}{\Sigma^i s''}{}{}{f}$, with $s', s'' \in \sS$. Since $\sS$ is a $w$-orthogonal collection, we have $f = 0$ or $f$ is an isomorphism. Therefore, $d \simeq s' \oplus \Sigma^i s''$ or $d=0$, and in both cases, we have $d \in \Sigma^i \sS * \sS$. 

Assume, by induction hypothesis, that $(\sS)_n * \Sigma^i \sS \subseteq \Sigma^i \sS * \extn{\sS}$. Let $d \in (\sS)_{n+1} * \Sigma^i \sS$, and write $\tri{s'}{d}{\Sigma^i s''}$, with $s' \in (\sS)_{n+1}$ and $s^{''} \in \sS$. Since $(\sS)_{n+1} = (\sS)_n * \sS$, we have a triangle of the form $\tri{x}{s'}{s}$, with $x \in (\sS)_n$ and $s \in \sS$. By the Octahedral Axiom in $\sD$, we have the following diagram.
\[
\xymatrix@!R=8px{
                    & \Sigma^{i-1} s^{''} \ar@{=}[r] \ar[d] & \Sigma^{i-1} s^{''} \ar[d]^f  & \\
x \ar[r] \ar@{=}[d] & s' \ar[r] \ar[d]                    & s \ar[r] \ar[d]             & \Sigma x \ar@{=}[d] \\
x \ar[r]            & d \ar[r] \ar[d]                     & y \ar[r] \ar[d]             & \Sigma x \\
                    & \Sigma^i s^{''} \ar@{=}[r]           & \Sigma^i s^{''}              &
}
\]
Since $\sS$ is a $w$-orthogonal collection, it follows that $f$ is an isomorphism or $f= 0$. Hence,  either $y \simeq 0$, which implies $d \simeq x \in (\sS)_n \subseteq \Sigma^i \sS * \extn{\sS}$, or $y \simeq s \oplus \Sigma^i s^{''}$. In this case, we have the following octahedral diagam.
\[
\xymatrix@!R=8px{
                    & \Sigma^i s^{''} \ar@{=}[r] \ar[d]       & \Sigma^i s^{''} \ar[d]  & \\
d \ar[r] \ar@{=}[d] & s \oplus \Sigma^i s^{''} \ar[r] \ar[d]  & \Sigma x \ar[r] \ar[d] & \Sigma d \ar@{=}[d] \\
d \ar[r]            & s \ar[r] \ar[d]                        & \Sigma z \ar[r] \ar[d] & \Sigma d \\
                    & \Sigma^{i+1} s^{''} \ar@{=}[r]           & \Sigma^{i+1} s^{''}      & 
}
\]
From the right-hand vertical triangle, we have $z \in (\sS)_n * \Sigma^i \sS$. Hence, by the induction hypothesis, $z \in \Sigma^i \sS * \extn{\sS}$.
Thus $d \in \Sigma^i \sS * \extn{\sS} * \sS = \Sigma^i \sS * \extn{\sS}$.
This finishes the proof that $\extn{\sS} * \Sigma^i \sS \subseteq \Sigma^i \sS * \extn{\sS}$. 

Now assume by induction that $\extn{\sS} * \Sigma^i (\sS)_n \subseteq \Sigma^i \extn{\sS} * \extn{\sS}$. We want to prove that $\extn{\sS} * \Sigma^i (\sS)_{n+1} \subseteq \Sigma^i \extn{\sS} * \extn{\sS}$. Consider the triangle $\tri{s'}{d}{\Sigma^i s^{''}}$, with $s' \in \extn{\sS}, s^{''} \in (\sS)_{n+1}$. Then there is a triangle $\tri{x}{s^{''}}{s_1}$, with $x \in (\sS)_n$ and $s_1 \in \sS$, and by the Octahedral Axiom, we have the commutative diagram below.
\[
\xymatrix@!R=8px{
                              & d \ar@{=}[r] \ar[d]           & d \ar[d]                   & \\
\Sigma^i x \ar[r] \ar@{=}[d]  & \Sigma^i s^{''} \ar[r] \ar[d]  & \Sigma^i s_1 \ar[r] \ar[d]  & \Sigma^{i+1} x \ar@{=}[d] \\
\Sigma^i x \ar[r]             & \Sigma s' \ar[r] \ar[d]       & \Sigma y \ar[r] \ar[d]      & \Sigma^{i+1} x \\
                              & \Sigma d \ar@{=}[r]           &  \Sigma d                   &  
}
\]
Hence, from the lower horizontal triangle we obtain $y \in \extn{\sS} * \Sigma^i (\sS)_n$. Hence, by induction, $y \in \Sigma^i \extn{\sS} * \extn{\sS}$.
Thus, from the right-hand column we have $d \in \Sigma^i \extn{\sS} * \extn{\sS} * \Sigma^i \sS \subseteq \Sigma^i \extn{\sS} * \Sigma^i \sS * \extn{\sS} = \Sigma^i \extn{\sS} * \extn{\sS}$, as required.
\end{proof}

\begin{lemma}
Let $w \geq 1$ and $\sS$ be a $w$-orthogonal collection. Then $\extn{\sS, \Sigma^{-1} \sS, \ldots, \Sigma^{1-w} \sS} = \extn{\sS} * \Sigma^{-1} \extn{\sS} * \cdots * \Sigma^{1-w} \extn{\sS}$.
Moreover, $\add(\extn{\sS, \Sigma^{-1} \sS, \ldots, \Sigma^{1-w} \sS}) = \extn{\sS, \Sigma^{-1} \sS, \ldots, \Sigma^{1-w} \sS}$.   
\end{lemma}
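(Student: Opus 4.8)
The plan is to establish the equality first and then deduce the ``moreover'' statement from it by induction on $w$. Throughout I use that $*$ is associative and monotone in each variable (a standard consequence of the octahedral axiom, cf.\ \cite[\S2]{IY}). For $w=1$ both assertions are trivial, since then both sides equal $\extn{\sS}$, which is extension-closed by definition and summand-closed by Lemma~\ref{lem:Dugas}(3); so assume $w\geq 2$.

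For the equality, write $\sY\coloneqq\extn{\sS}*\Sigma^{-1}\extn{\sS}*\cdots*\Sigma^{1-w}\extn{\sS}$ and $\mathsf{C}\coloneqq\extn{\sS,\Sigma^{-1}\sS,\ldots,\Sigma^{1-w}\sS}$. The inclusion $\sY\subseteq\mathsf{C}$ is clear: each factor $\Sigma^{-j}\extn{\sS}=\extn{\Sigma^{-j}\sS}$ lies in $\mathsf{C}$ because $\mathsf{C}$ is extension-closed and contains $\Sigma^{-j}\sS$, and an iterated $*$-product of subcategories contained in an extension-closed subcategory is again contained in it. For $\mathsf{C}\subseteq\sY$, since $\mathsf{C}$ is the smallest extension-closed subcategory containing $\bigcup_j\Sigma^{-j}\sS$, it suffices to check that $\sY$ is extension-closed and contains each $\Sigma^{-j}\sS$; the latter holds because $0\in\Sigma^{-i}\extn{\sS}$ for each $i$, so $\Sigma^{-j}\sS\subseteq\Sigma^{-j}\extn{\sS}\subseteq\sY$. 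The content is $\sY*\sY\subseteq\sY$, and for this the key observation is the ``swap'' statement: for $0\leq j<i\leq w-1$,
\[
\Sigma^{-i}\extn{\sS}*\Sigma^{-j}\extn{\sS}\subseteq\Sigma^{-j}\extn{\sS}*\Sigma^{-i}\extn{\sS},
\]
obtained by applying the autoequivalence $\Sigma^{-i}$ to Lemma~\ref{lem:reverseorderextension} with its ``$i$'' taken to be $i-j\in\{1,\ldots,w-1\}$. Now $\sY*\sY$ is the $*$-product of $2w$ factors with exponents $0,-1,\ldots,1-w,0,-1,\ldots,1-w$; as all these exponents lie in $\{0,-1,\ldots,1-w\}$, any pair of adjacent factors whose exponents are out of non-increasing order can be transposed by the swap statement, shrinking the subcategory. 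After finitely many such transpositions one arrives at the product whose exponents are in non-increasing order, each value occurring twice, and collapsing each repeated pair via $\Sigma^{-a}\extn{\sS}*\Sigma^{-a}\extn{\sS}=\Sigma^{-a}(\extn{\sS}*\extn{\sS})=\Sigma^{-a}\extn{\sS}$ (as $\extn{\sS}$ is extension-closed) gives $\sY*\sY\subseteq\sY$. Hence $\mathsf{C}=\sY$.

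For the ``moreover'' statement I would first warn that the naive route — checking that $\sS'\coloneqq\sS\cup\Sigma^{-1}\sS\cup\cdots\cup\Sigma^{1-w}\sS$ is an orthogonal collection and applying Lemma~\ref{lem:Dugas}(3) — does \emph{not} work, since $\sS'$ is in general not orthogonal (for $s,t\in\sS$ one has $\Hom(\Sigma^{-1}s,t)\cong\Hom(s,\Sigma t)$, which need not vanish). Instead, using the equality just proved, write $\mathsf{C}=\extn{\sS}*\mathsf{W}$ with $\mathsf{W}\coloneqq\Sigma^{-1}\extn{\sS}*\cdots*\Sigma^{1-w}\extn{\sS}=\Sigma^{-1}\bigl(\extn{\sS}*\cdots*\Sigma^{2-w}\extn{\sS}\bigr)$. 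Here $\extn{\sS}$ is extension-closed and summand-closed (Lemma~\ref{lem:Dugas}(3)); $\mathsf{W}$ is summand-closed because, by the equality and the inductive hypothesis applied with $w-1$ in place of $w$, the bracket equals $\extn{\sS,\Sigma^{-1}\sS,\ldots,\Sigma^{2-w}\sS}$ and is summand-closed; and $\Hom(\extn{\sS},\mathsf{W})=0$, by a dévissage from condition (2) of Definition~\ref{def:w-sms}, which yields $\Hom(\Sigma^{k}\extn{\sS},\extn{\sS})=\Hom(\extn{\sS},\Sigma^{-k}\extn{\sS})=0$ for $1\leq k\leq w-1$. It then remains to invoke the general fact that $\sX*\sY$ is summand-closed whenever $\sX$ is extension-closed and summand-closed, $\sY$ is summand-closed, and $\Hom(\sX,\sY)=0$; this is proved by taking, for $d=d'\oplus d''\in\sX*\sY$, a minimal right $\sX$-approximation $f$ of $d$ (Lemma~\ref{lem:basic-props}(1); its cone lies in $\sY$ by Lemma~\ref{lem:basic-props}(2) and summand-closedness of $\sY$), observing that $f$ is, up to isomorphism, the direct sum $\bar f'\oplus\bar f''$ of minimal right $\sX$-approximations of $d'$ and $d''$ (a direct sum of minimal approximations being again a minimal approximation of the direct sum, by uniqueness), and deducing that $\operatorname{cone}(\bar f')$, a direct summand of $\operatorname{cone}(f)\in\sY$, lies in $\sY$, so that $d'\in\sX*\sY$. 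Applying this with $\sX=\extn{\sS}$ and $\sY=\mathsf{W}$ shows $\mathsf{C}$ is summand-closed, i.e.\ $\add\mathsf{C}=\mathsf{C}$.

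The step I expect to be the main obstacle is the ``moreover'' part: one must recognise that $\sS'$ is not orthogonal (so Lemma~\ref{lem:Dugas}(3) is not directly available), set up the induction on $w$ so that the $w-1$ case of the equality supplies the summand-closedness of $\mathsf{W}$, verify the $\Hom$-vanishing $\Hom(\extn{\sS},\mathsf{W})=0$, and carry out the summand-closedness of a Hom-orthogonal $*$-product. By contrast, the reorganisation of the iterated $*$-product for the equality is routine once the swap statement is in hand, the only point requiring care being that every transposition involves an exponent difference strictly between $0$ and $w$, which holds automatically.
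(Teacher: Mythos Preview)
Your proposal is correct and follows essentially the same route as the paper. For the equality, both arguments rest on the swap inclusion of Lemma~\ref{lem:reverseorderextension}; the paper takes an element of $\extn{\sS,\Sigma^{-1}\sS,\ldots,\Sigma^{1-w}\sS}$, writes it via a tower as lying in some $\Sigma^{i_1}\extn{\sS}*\cdots*\Sigma^{i_n}\extn{\sS}$, and reorders the exponents, whereas you show directly that $\sY=\extn{\sS}*\cdots*\Sigma^{1-w}\extn{\sS}$ is extension-closed by reordering $\sY*\sY$ --- two equivalent packagings of the same bubble-sort. For the ``moreover'' part, the paper simply invokes \cite[Proposition~2.1]{IY} (together with Lemma~\ref{lem:Dugas}(3)), which is precisely the general fact about summand-closedness of a Hom-orthogonal $*$-product that you unpack by hand; your induction on $w$ is exactly what is implicit in applying that proposition iteratively, and your warning that $\sS\cup\Sigma^{-1}\sS\cup\cdots$ is not orthogonal is well taken.
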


\begin{proof}
The inclusion $\extn{\sS} * \Sigma^{-1} \extn{\sS} * \cdots * \Sigma^{1-w} \extn{\sS} \subseteq \extn{\sS,\Sigma^{-1} \sS, \ldots, \Sigma^{1-w} \sS}$ is clear. For the other inclusion,
let $d \in \extn{\sS, \Sigma^{-1} \sS, \ldots, \Sigma^{1-w} \sS}$. This means there is a tower
\[
\xymatrix@!R=8px{
0 = d_0 \ar[r] & d_1 \ar[r] \ar[d] & d_2 \ar[r] \ar[d] & \cdots \ar[r] & d_{n-1} \ar[r] \ar[d] & d_n = d\ar[d] \\
&                                \Sigma^{i_1} s_{j_1} \ar@{~>}[ul] & \Sigma^{i_2} s_{j_2} \ar@{~>}[ul] & \cdots & \Sigma^{i_n-1} s_{j_n-1} \ar@{~>}[ul] & \Sigma^{i_n} s_{j_n} \ar@{~>}[ul]
}
\]
where $i_1, \ldots, i_n \in \{1-w, \ldots, 0\}$, and $s_{j_k} \in \sS$, with $1 \leq k \leq n$. In other words, we have $d \in \Sigma^{i_1} \sS * \Sigma^{i_2} \sS * \cdots * \Sigma^{i_n} \sS \subseteq \Sigma^{i_1} \extn{\sS} * \Sigma^{i_2} \extn{\sS} * \cdots * \Sigma^{i_n} \extn{\sS}$. By Lemma \ref{lem:reverseorderextension}, we can re-order the $i_k$ so that $0\geq i_1 \geq i_2 \geq \cdots \geq i_n \geq 1-w$, which implies that $d \in \extn{\sS} * \Sigma^{-1} \extn{\sS} * \cdots * \Sigma^{1-w} \extn{\sS}$.
The second statement follows immediately by \cite[Proposition 2.1]{IY} using the fact that $\extn{\sS}$ is closed under summands by Lemma~\ref{lem:Dugas}.
 \end{proof}

\begin{corollary}\label{cor:smsimpliesff}
If $\sS$ is a $w$-simple-minded system in $\sD$ then $\extn{\sS}$ is functorially finite in $\sD$.
\end{corollary}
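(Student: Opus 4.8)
Write $\sX = \extn{\sS}$. The plan is to combine the description of $\sD$ furnished by the preceding lemma together with Definition~\ref{def:w-sms}(3),
\[
\sD = \sX * \Sigma^{-1}\sX * \cdots * \Sigma^{1-w}\sX,
\]
with the $w$-orthogonality of $\sS$: I want to show that $\sX$ can always be split off as the outermost factor of this iterated extension, yielding a right $\sX$-approximation from the left end and a left $\sX$-approximation from the right end (the latter after rotating the whole expression by the autoequivalence $\Sigma^{w-1}$). For $w = 1$ the display reads $\sD = \sX$ and the corollary is immediate, so I may assume $w \geq 2$.

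The one substantive step is the auxiliary vanishing $\Hom(\Sigma^{i}\sX, \sX) = 0$ for $1 \leq i \leq w-1$, which I would prove by d\'evissage. By Lemma~\ref{lem:Dugas}(1) every object of $\sX$ has finite $\sS$-length, so applying the cohomological functor $\Hom(\Sigma^{i}-,-)$ in each variable to the triangles defining the $(\sS)_{n}$, and using that $\Hom$ of a direct summand is a direct summand, an induction on the two $\sS$-lengths reduces the claim to objects of $\sS$, where it is exactly Definition~\ref{def:w-sms}(2). Granting this, $\Sigma^{-i}\sX \subseteq \sX\orth$ and $\Sigma^{i}\sX \subseteq {}\orth\sX$ for $1 \leq i \leq w-1$, and since $\sX\orth$ and ${}\orth\sX$ are extension-closed it follows that $\sY \coloneqq \Sigma^{-1}\sX * \cdots * \Sigma^{1-w}\sX \subseteq \sX\orth$ and, symmetrically, $\Sigma^{w-1}\sX * \cdots * \Sigma\sX \subseteq {}\orth\sX$.

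For the conclusion, fix $d \in \sD$. By associativity of $*$ we have $\sD = \sX * \sY$, so $d$ lies in a triangle $\tri{a}{d}{y}$ with $a \in \sX$ and $y \in \sY \subseteq \sX\orth$; applying $\Hom(\sX,-)$ shows that $a \to d$ is a right $\sX$-approximation, so $\sX$ is contravariantly finite. Applying $\Sigma^{w-1}$ to the display yields $\sD = (\Sigma^{w-1}\sX * \cdots * \Sigma\sX) * \sX$, so $d$ lies in a triangle $\tri{y'}{d}{a'}$ with $y' \in \Sigma^{w-1}\sX * \cdots * \Sigma\sX \subseteq {}\orth\sX$ and $a' \in \sX$; applying $\Hom(-,\sX)$ shows that $d \to a'$ is a left $\sX$-approximation, so $\sX$ is covariantly finite. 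Hence $\extn{\sS}$ is functorially finite. The only place that calls for real (if routine) work is the d\'evissage behind the auxiliary Hom-vanishing; everything else is formal bookkeeping with triangles and perpendicular subcategories.
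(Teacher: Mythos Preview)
Your argument is correct and is precisely the intended one: the paper states the corollary without proof immediately after the lemma identifying $\sD$ with $\extn{\sS} * \Sigma^{-1}\extn{\sS} * \cdots * \Sigma^{1-w}\extn{\sS}$, and you have spelled out exactly the standard d\'evissage and torsion-pair argument that this placement implies. There is nothing to add.
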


\begin{lemma} \label{lem:funct-finite}
Let $\sS$ be a $w$-orthogonal collection in $\sD$ such that $\extn{\sS}$ is functorially finite in $\sD$. Then for $0 \leq k \leq w$, $\extn{\sS} * \Sigma^{-1} \extn{\sS} * \cdots * \Sigma^{-k} \extn{\sS}$ is functorially finite in $\sD$.
\end{lemma}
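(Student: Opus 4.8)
The plan is to induct on $k$, the case $k=0$ being exactly the hypothesis that $\extn{\sS}$ is functorially finite. For the inductive step I would use associativity of the operation $*$ (a standard consequence of the Octahedral Axiom) to write
\[
\extn{\sS} * \Sigma^{-1}\extn{\sS} * \cdots * \Sigma^{-k}\extn{\sS} = \big(\extn{\sS} * \cdots * \Sigma^{1-k}\extn{\sS}\big) * \Sigma^{-k}\extn{\sS}.
\]
Granting the inductive hypothesis that the first factor is functorially finite, and noting that $\Sigma^{-k}\extn{\sS}$ is functorially finite since $\Sigma^{-k}$ is an autoequivalence and $\extn{\sS}$ is functorially finite, everything reduces to the general claim: \emph{if $\sX$ and $\sY$ are contravariantly (resp.\ covariantly) finite subcategories of $\sD$, then so is $\sX * \sY$}. (This two-factor statement may well already be available, cf.\ \cite[Section~2]{IY}, in which case one simply cites it.)

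To prove contravariant finiteness of $\sX * \sY$, given $d \in \sD$ I would take a right $\sX$-approximation $\alpha \colon x \to d$, complete it to a triangle $x \xrightarrow{\alpha} d \xrightarrow{p} c \to \Sigma x$, and then take a right $\sY$-approximation $\beta \colon y \to c$. Let $e$ be the homotopy pullback of $d \xrightarrow{p} c \xleftarrow{\beta} y$. Since the cone of $p$ is $\Sigma x$, the homotopy pullback property gives $\mathrm{cone}(e \to y) \cong \Sigma x$, so rotating the corresponding triangle exhibits $e$ in a triangle $\tri{x}{e}{y}$; hence $e \in \sX * \sY$. I then claim the induced map $\eta \colon e \to d$ is a right $(\sX * \sY)$-approximation: for any $\mu \colon e_1 \to d$ with $e_1 \in \sX * \sY$, say sitting in a triangle $\tri{x_1}{e_1}{y_1}$, the composite $x_1 \to e_1 \xrightarrow{\mu} d$ factors through $\alpha$, so $p\mu$ annihilates $x_1 \to e_1$ and therefore factors through $e_1 \to y_1$; the resulting map $y_1 \to c$ factors through $\beta$, and feeding the pair $(\mu, \,\cdot\,)\colon e_1 \to d \oplus y$ into the defining triangle of the homotopy pullback yields a factorisation $\mu = \eta\nu$. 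The covariant case is dual.

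The main obstacle is the bookkeeping in this last step: one must track the successive factorisations through $\alpha$, through $\beta$, and finally through the (weak) universal property of the homotopy pullback --- equivalently, one further application of the Octahedral Axiom --- and check their compatibility; the identification $e \in \sX * \sY$ similarly needs a brief octahedral argument for $\mathrm{cone}(e \to y) \cong \mathrm{cone}(p) \cong \Sigma x$. None of this is deep, but it is the part that must be written out with care. Once it is in place the induction closes at once and gives the statement for all $0 \leq k \leq w$ (indeed for all $k \geq 0$).
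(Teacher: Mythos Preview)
Your argument is correct, and in fact proves more than the paper does: the general two-factor claim that $\sX * \sY$ is contravariantly finite whenever $\sX$ and $\sY$ are holds without any orthogonality assumptions, so the restriction $0 \leq k \leq w$ and the $w$-orthogonality of $\sS$ are not actually needed for the conclusion. Your homotopy-pullback verification of the approximation property is clean and goes through exactly as you sketch.

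The paper's proof builds the same object --- its octahedral diagram applied to the composition $h_k g_{k-1}$ \emph{is} your homotopy pullback --- but checks that $f_k \colon x_k \to d$ is a right approximation differently: rather than factoring an arbitrary test map through $f_k$ directly, it shows the cone $d_{k+1}$ lies in $(\extn{\sS} * \cdots * \Sigma^{-k}\extn{\sS})\orth$, whence surjectivity of $\Hom(-,f_k)$ on the relevant subcategory is immediate from the long exact sequence. Establishing that perpendicularity is where the paper spends its effort, invoking $w$-orthogonality of $\sS$ for the terms $0 < i \leq k$ and Lemma~\ref{lem:torsion-pair}(1) for $i = 0$. So the paper's route yields the extra information that the successive cones $d_{k+1}$ lie in the perpendicular category, at the cost of using hypotheses your argument shows are unnecessary for the bare statement. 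Your approach is more elementary and more general; the paper's gives a slightly sharper structural conclusion along the way.
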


\begin{proof}
We proceed by induction on $k$. For $k = 0$ this is by assumption. Let $d \in \sD$ and fix $d_0 = d$. Suppose, by induction, for $0 \leq i < k$ we have constructed triangles
\[
x_i \rightlabel{f_i} d \rightlabel{g_i} d_{i+1} \too \Sigma x_i
\]
in which $f_i \colon x_i \to d$ is a right $(\extn{\sS} * \Sigma^{-1} \extn{\sS} * \cdots * \Sigma^{-i} \extn{\sS})$-approximation.
Note that by Lemma~\ref{lem:basic-props}(4) we have $d_{i+1} \in (\extn{\sS} * \Sigma^{-1} \extn{\sS} * \cdots * \Sigma^{-i} \extn{\sS})\orth$.
Now take a right $(\Sigma^{-k} \extn{\sS})$-approximation of $d_k$,
\[
\Sigma^{-k} s_k \too d_k \rightlabel{h_k} d_{k+1} \too \Sigma^{-k+1} s_k.
\]
Applying the Octahedral Axiom to the composition $h_k g_{k-1}$ we get the following commutative diagram.
\[
\xymatrix@!R=8px{
                                              & \Sigma^{-k} s_k \ar[d] \ar@{=}[r]    & \Sigma^{-k} s_k \ar[d]                           &  \\
d \ar@{=}[d] \ar[r]^-{g_{k-1}} & d_k \ar[d]^-{h_k} \ar[r]                     & \Sigma x_{k-1} \ar[d] \ar[r]                    & \Sigma d \ar@{=}[d] \\
d \ar[r]                                   & d_{k+1} \ar[d] \ar[r]                         & \Sigma x_k \ar[d] \ar[r]_-{-\Sigma f_k}  & \Sigma d \\
                                             & \Sigma^{-k+1} s_k \ar@{=}[r]          & \Sigma^{-k+1} s_k                                &
}
\] 
By construction, we have $x_k \in \extn{\sS} * \Sigma^{-1} \extn{\sS} * \cdots * \Sigma^{-k} \extn{\sS}$.
We claim that $d_{k+1} \in (\extn{\sS} * \Sigma^{-1} \extn{\sS} * \cdots * \Sigma^{-k} \extn{\sS})\orth$. By Lemma~\ref{lem:basic-props}(4), we have $d_{k+1} \in (\Sigma^{-k} \extn{\sS})\orth$. Consider the long exact sequence for $0 \leq i \leq k$:
\[
(\Sigma^{-k+i+1} \sS, d_k) \too (\Sigma^{-k+i+1} \sS, d_{k+1}) \too (\Sigma^{-k+i} \sS, \Sigma^{-k} s_k) \too (\Sigma^{-k+i} \sS, d_k).
\] 
When $0 < i \leq k$, the first and third terms are zero by induction and $w$-orthogonality of $\sS$.
When $i = 0$, the first term is zero and the morphism $\Hom(\Sigma^{-k} \sS, \Sigma^{-k} s_k) \too \Hom(\Sigma^{-k} \sS, d_k)$ is an isomorphism by Lemma~\ref{lem:torsion-pair}(1) applied to the orthogonal collection $\Sigma^{-k} \sS$. 
This gives the claim and shows that $f_k \colon x_k \to d$ is a right $(\extn{\sS} * \Sigma^{-1} \extn{\sS} * \cdots * \Sigma^{-k} \extn{\sS})$-approximation.
Covariant finiteness is proved analogously.
\end{proof}

The following theorem is essentially \cite[Theorem 3.3]{Dugas}. For our purposes we require some information that was implicit in the proof in \cite{Dugas} but not its statement. For the convenience of the reader we include a proof to make these details explicit.

\begin{theorem}[{\cite[Theorem 3.3]{Dugas}}] \label{thm:funct-finite}
Suppose $\sS \subseteq \sT$ for an orthogonal collection $\sT$ in $\sD$. Then $\extn{\sS}$ is functorially finite in $\extn{\sT}$ and for each $x \in (\sT)_n$, with $n$ minimally chosen, there is a right $\extn{\sS}$-approximation triangle
\[
s_x \to x \to t_x \to \Sigma s_x
\]
with $t_x \in (\sT)_m \cap \sS\orth$ for some $m \leq n$ with equality if and only if $t_x \simeq x$.
\end{theorem}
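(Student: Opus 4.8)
The plan is to prove the displayed approximation statement by induction on the $\sT$-length $n$ of $x$ (which is well defined by Lemma~\ref{lem:Dugas} applied to the orthogonal collection $\sT$), and to read off from it both functorial finiteness and the ``equality if and only if'' clause. To make the induction go through I would prove the slightly stronger assertion: for every $x$ of $\sT$-length $n$ there is a right $\extn{\sS}$-approximation triangle $s_x \to x \to t_x \to \Sigma s_x$ with $s_x \in \extn{\sS}$ and $t_x \in (\sT)_m \cap \sS\orth$, where $m$ denotes the $\sT$-length of $t_x$, such that $m \leq n$ and, moreover, $s_x = 0$ if and only if $m = n$. This equivalence is exactly what yields the theorem's final clause: $s_x = 0$ forces $x \simeq t_x$, and conversely $x \simeq t_x$ forces $m = n$. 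Granting the assertion, functorial finiteness of $\extn{\sS}$ in $\extn{\sT} = \bigcup_n (\sT)_n$ is immediate for right approximations, and the covariant finiteness statement follows by the dual argument. Two elementary facts will be used throughout: (a) $\sS\orth = \extn{\sS}\orth$, proved by induction along $(\sS)_k = (\sS * (\sS)_{k-1})^\oplus$ using the long exact sequences of the relevant triangles, so that any morphism $e \to d$ with $e \in \extn{\sS}$ whose cone lies in $\sS\orth$ is automatically a right $\extn{\sS}$-approximation of $d$; and (b) $(\sT)_a * (\sT)_b \subseteq (\sT)_{a+b}$, i.e. $\sT$-length is subadditive along triangles (a short induction using Lemma~\ref{lem:Dugas}(3)).

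For the inductive step, fix $x$ of $\sT$-length $n \geq 1$ and choose, via Definition~\ref{def:length} for $\sT$, a triangle $x' \to x \to t \to \Sigma x'$ with $t \in \sT$ and $x' \in (\sT)_{n-1}$; by fact (b), $x'$ has $\sT$-length exactly $n-1$. Apply the inductive hypothesis to $x'$, producing $s' \to x' \to t' \to \Sigma s'$ with $s' \in \extn{\sS}$, $t' \in (\sT)_{m'} \cap \sS\orth$, $m' \leq n-1$ and $s' = 0 \iff m' = n-1$. Applying $\TRfour$ to the composite $s' \to x' \to x$ gives a triangle $s' \to x \to c \to \Sigma s'$ together with a triangle $t' \to c \xrightarrow{q} t \xrightarrow{\rho} \Sigma t'$. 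I would then distinguish three cases. \emph{Case 1: $t \notin \sS$.} Since $\sT$ is orthogonal, $t \in \sS\orth$, so the long exact sequences attached to $t' \to c \to t$ give $c \in \sS\orth$; set $s_x := s'$ and $t_x := c \in (\sT)_{m'+1}$. \emph{Case 2: $t \in \sS$ and $\rho \neq 0$} (equivalently $q$ is not a split epimorphism). Here $\Hom(s,t) = 0$ for $s \in \sS$ with $s \not\simeq t$, while $\Hom(t,t) \cong \kk$ and $\rho \circ (-)$ is injective on this line; hence the long exact sequences force $\Hom(s,c) = 0$ for all $s \in \sS$, i.e. $c \in \sS\orth$; set $s_x := s'$ and $t_x := c \in (\sT)_{m'+1}$. \emph{Case 3: $t \in \sS$ and $\rho = 0$.} Then $c \simeq t' \oplus t$, and applying $\TRfour$ to the composite $x \to c \to t'$ (the split projection) produces a triangle $s_x \to x \to t' \to \Sigma s_x$ with $s_x \in s' * t \subseteq \extn{\sS}$ (the inclusion because $\extn{\sS}$ is extension-closed); set $t_x := t' \in (\sT)_{m'}$.

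In all three cases $s_x \in \extn{\sS}$ and $t_x \in \sS\orth$, so by fact (a) the morphism $s_x \to x$ is a right $\extn{\sS}$-approximation with cone $t_x \in (\sT)_m \cap \sS\orth$. The numerics are then checked case by case. In Cases 1 and 2, $s_x = s'$ and $m \leq m'+1 \leq n$; if $s' = 0$ then $c \simeq x$, so $m = n$, whereas if $s' \neq 0$ then $m' < n-1$, so $m < n$ -- hence $m = n \iff s' = 0 = s_x$. In Case 3, $m \leq m' \leq n-1 < n$ (as $t_x = t' \in (\sT)_{m'}$) and $s_x \neq 0$ (otherwise $t' \simeq x$, contradicting $\sT$-length $\leq n-1$), so again $m = n \iff s_x = 0$. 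This closes the induction; the base case $n = 0$ ($x = 0$) is trivial.

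I expect the one genuine difficulty to be Case 2: when the new brick $t$ lies in $\sS$ and the connecting morphism $\rho$ is nonzero, the cone $c$ is a non-split extension and there is no formal reason for it to lie in $\sS\orth$. What rescues the argument is precisely the defining property of a brick in $\sS$ -- that $\Hom(-,t)$ restricted to $\sS$ is one-dimensional -- so that a single nonzero $\rho$ kills the only potentially surviving class. The remaining subtlety is organisational: one must arrange the three-case split so that ``$t_x \in \sS\orth$'' and the tight relation between $m$, $n$ and $s_x$ (needed for the final clause) emerge simultaneously. Beyond that the proof is routine manipulation of octahedra and long exact sequences, and the covariant half of functorial finiteness is the exact dual of the above.
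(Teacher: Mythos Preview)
Your argument is correct, but it takes a different route from the paper's. Both proofs are by induction on the $\sT$-length $n$, but they reduce $n$ in opposite ways. The paper first checks whether $x \in \sS\orth$; if not, it uses Lemma~\ref{lem:Dugas}(2) (applied to the orthogonal collection $\sT$) to pick some $s \in \sS$ with a nonzero map $\sigma \colon s \to x$, whose cone $y$ then lies in $(\sT)_{n-1}$. One octahedron applied to the composite $s \to x \to t_y$ (with $t_y$ the inductively obtained object for $y$) yields the approximation triangle directly, with $t_x = t_y \in (\sT)_m$ for $m \leq n-1 < n$. No case analysis is needed: the point is that by peeling off an element of $\sS$ rather than of $\sT$, the cone automatically lands in $\sS\orth$ after the induction step.

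Your approach instead peels off the \emph{last} factor $t \in \sT$ of a $\sT$-composition series and then splits into three cases according to whether $t \in \sS$ and whether the connecting map $\rho$ vanishes. This is longer, and your Case~2 computation (the one you flagged as the genuine difficulty) is exactly the step that the paper's approach sidesteps entirely. On the other hand, your version makes the bookkeeping for the ``equality if and only if $t_x \simeq x$'' clause completely explicit via the strengthened induction hypothesis $s_x = 0 \iff m = n$, whereas in the paper this clause is left somewhat implicit (it follows because in the non-trivial branch one always has $m < n$ and $t_x \in \sS\orth$ while $x \notin \sS\orth$). A minor point: your base case should be stated at $n = 1$ rather than $n = 0$, matching the indexing of $(\sT)_n$ in Lemma~\ref{lem:Dugas}; this is cosmetic and your inductive step in fact already handles $n = 1$ correctly.
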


\begin{corollary}
If $\sS$ is a $w$-orthogonal collection such that $\sS \subseteq \sT$ for some $w$-simple-minded system $\sT$, then $\extn{\sS}$ is functorially finite in $\sD$.
\end{corollary}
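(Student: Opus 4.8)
The plan is to obtain the statement as an instance of transitivity of functorial finiteness along the tower of extension-closed subcategories $\extn{\sS} \subseteq \extn{\sT} \subseteq \sD$. First I would note that the collection $\sT$, being a $w$-simple-minded system, is in particular $w$-orthogonal, and hence satisfies condition (1) of Definition~\ref{def:w-sms}, so it is an orthogonal collection in the sense used by Theorem~\ref{thm:funct-finite}; applying that theorem to the inclusion $\sS \subseteq \sT$ (note $\sS$ need not even be assumed $w$-orthogonal, this being automatic from $\sS \subseteq \sT$) shows that $\extn{\sS}$ is functorially finite in $\extn{\sT}$. On the other hand, applying Corollary~\ref{cor:smsimpliesff} to the $w$-simple-minded system $\sT$ itself shows that $\extn{\sT}$ is functorially finite in $\sD$. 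It then remains only to check that these two facts combine to give functorial finiteness of $\extn{\sS}$ in $\sD$.

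Concretely, I would argue as follows for contravariant finiteness; covariant finiteness is dual. Given $d \in \sD$, choose a right $\extn{\sT}$-approximation $\beta \colon t \to d$ in $\sD$, and then a right $\extn{\sS}$-approximation $\alpha \colon s \to t$ computed inside $\extn{\sT}$ (which exists since $t \in \extn{\sT}$ and $\extn{\sS}$ is functorially finite there). I claim the composite $\beta\alpha \colon s \to d$ is a right $\extn{\sS}$-approximation of $d$. Indeed, for any $s' \in \extn{\sS}$ and any $\phi \colon s' \to d$, the containment $s' \in \extn{\sS} \subseteq \extn{\sT}$ together with the approximation property of $\beta$ gives a factorisation $\phi = \beta\psi$ with $\psi \colon s' \to t$; since $\alpha$ is a right $\extn{\sS}$-approximation of $t$ and $s' \in \extn{\sS}$, we get $\psi = \alpha\chi$ with $\chi \colon s' \to s$, whence $\phi = (\beta\alpha)\chi$ factors through $\beta\alpha$. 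Thus $\extn{\sS}$ is contravariantly finite in $\sD$, and dually covariantly finite, hence functorially finite.

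I do not expect a genuine obstacle here: the argument is a routine transitivity check. The only points requiring (minor) care are verifying that $\sT$ simultaneously meets the hypotheses of Theorem~\ref{thm:funct-finite} (being an orthogonal collection) and of Corollary~\ref{cor:smsimpliesff} (being a $w$-simple-minded system), and the elementary verification, spelled out above, that a composite of approximations along a nested pair of subcategories is again an approximation. If one preferred to keep the write-up leaner one could simply cite this transitivity as a standard fact about functorially finite subcategories, but including the three-line proof seems cleanest.
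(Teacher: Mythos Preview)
Your proposal is correct and is exactly the argument the paper intends: the corollary is stated without proof immediately after Theorem~\ref{thm:funct-finite}, and the implicit reasoning is precisely to combine that theorem (giving $\extn{\sS}$ functorially finite in $\extn{\sT}$) with Corollary~\ref{cor:smsimpliesff} (giving $\extn{\sT}$ functorially finite in $\sD$) via the transitivity you spell out.
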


There is a dual of Theorem~\ref{thm:funct-finite} using left $\extn{\sS}$-approximation triangles.

\begin{proof}[Proof of Theorem~\ref{thm:funct-finite}]
Let $x \in (\sT)_n$ with $n$ chosen minimally. We proceed on induction on $n$. For $n =1$, one of the triangles $s_x \to x \to 0 \to \Sigma s_x$ or  $0 \to x \to t_x \to 0$ is the required triangle and the statement holds. Suppose the statement is true for objects in $(\sT)_{n-1}$.
If $x \in \sS\orth$, then there is nothing to show, so suppose $x \notin \sS\orth$. By Lemma~\ref{lem:Dugas}(2), there exists a triangle $s \rightlabel{\sigma} x \too y \too \Sigma s$ with $s\in \sS$, $\sigma \neq 0$, $y \in (\sT)_{n-1}$.
By induction, there is a right $\extn{\sS}$-approximation triangle for $y$: $s_y \to y \to t_y \to \Sigma s_y$ with $s_y \in \extn{\sS}$ and  $t_y \in (\sT)_m \cap \sS\orth$ with $m \leq n-1$ and equality if and only if $y \simeq t_y$.
Applying the Octahedral Axiom we get the following commutative diagram.
\[
\xymatrix@!R=8px{
                             & s_y \ar[d] \ar@{=}[r]    & s_y \ar[d]                      &                                 \\
x \ar@{=}[d] \ar[r] & y \ar[d] \ar[r]                 & \Sigma s \ar[d] \ar[r]      & \Sigma x \ar@{=}[d] \\
x \ar[r]                  & t_y \ar[d] \ar[r]              & \Sigma s_x \ar[d] \ar[r]  & \Sigma x   \\
                             & \Sigma s_y \ar@{=}[r] & \Sigma s_y                    &
}
\]
Clearly, $s_x \in \extn{\sS}$ and $t_y \in (\sT)_m \cap \sS\orth$ with $m \leq n-1 < n$, giving the desired right approximation triangle.
\end{proof}

The next proposition says that a $w$-simple-minded system is precisely a $w$-Riedtmann configuration in which $\extn{\sS}$ is functorially-finite, generalising \cite[Theorem 3.8]{CS17}. This supports the view advanced in \cite{CS17} that $w$-Riedtmann configurations are a negative CY analogue of weakly cluster-tilting subcategories whilst $w$-simple-minded systems correspond to cluster-tilting subcategories; cf. \cite{HJ}. 

\begin{proposition} \label{prop:Riedtmann}
Let $\sS$ be a collection of indecomposable objects in $\sD$, and $w \geq 1$. The following conditions are equivalent:
\begin{enumerate}
\item $\sS$ is a $w$-simple-minded system.
\item $\sS$ is a right $w$-Riedtmann configuration and $\extn{\sS}$ is covariantly finite. 
\item $\sS$ is a left $w$-Riedtmann configuration and $\extn{\sS}$ is contravariantly finite. 
\end{enumerate}
\end{proposition}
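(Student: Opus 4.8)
The plan is to prove $(1)\Rightarrow(2)\Rightarrow(1)$ and, by a mirror argument, $(1)\Rightarrow(3)\Rightarrow(1)$. The two halves are formally dual: everywhere in the second half one replaces left approximations by right approximations, covariant finiteness of $\extn{\sS}$ by contravariant finiteness, the left perpendicular by the right perpendicular, and Lemma~\ref{lem:basic-props}(5) by Lemma~\ref{lem:basic-props}(4). The one twist is that for the left-Riedtmann side one works with $\extn{\sS,\Sigma\sS,\ldots,\Sigma^{w-1}\sS}$ in place of $\extn{\sS,\Sigma^{-1}\sS,\ldots,\Sigma^{1-w}\sS}$; the two are interchanged by the autoequivalence $\Sigma^{w-1}$, so a $w$-orthogonal collection $\sS$ is a $w$-simple-minded system if and only if $\sD=\add\extn{\sS,\Sigma\sS,\ldots,\Sigma^{w-1}\sS}$. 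I shall spell out only the $(1)\Leftrightarrow(2)$ half.

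Write $\mathsf{A}\coloneqq\extn{\sS,\Sigma^{-1}\sS,\ldots,\Sigma^{1-w}\sS}$. As established above, $\mathsf{A}=\extn{\sS}*\Sigma^{-1}\extn{\sS}*\cdots*\Sigma^{1-w}\extn{\sS}$; being an extension closure it is extension-closed, and it is closed under direct summands. The crucial point is a reformulation of the perpendicular: since $\Sigma^{-j}\sS\subseteq\mathsf{A}$ for $0\leq j\leq w-1$ and $\Hom(d,-)$ is cohomological, one gets ${}\orth\mathsf{A}=\{d\in\sD\mid\Hom(\Sigma^k d,\sS)=0 \text{ for } 0\leq k\leq w-1\}$, so that $\sS$ is a right $w$-Riedtmann configuration precisely when ${}\orth\mathsf{A}=0$.

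Now $(1)\Rightarrow(2)$ is immediate: if $\sS$ is a $w$-simple-minded system then $\extn{\sS}$ is functorially finite by Corollary~\ref{cor:smsimpliesff}, in particular covariantly finite, and $\sD=\add\extn{\sS,\Sigma^{-1}\sS,\ldots,\Sigma^{1-w}\sS}=\mathsf{A}$, whence ${}\orth\mathsf{A}={}\orth\sD=0$ and $\sS$ is a right $w$-Riedtmann configuration by the reformulation above. Conversely, for $(2)\Rightarrow(1)$, assume $\sS$ is a right $w$-Riedtmann configuration with $\extn{\sS}$ covariantly finite. Then each $\Sigma^{-j}\extn{\sS}$ is covariantly finite, and hence so is $\mathsf{A}$ (the covariant direction of Lemma~\ref{lem:funct-finite}, with $k=w-1$; its proof uses only covariant finiteness of $\extn{\sS}$, or one may invoke \cite[Prop. 2.1]{IY}). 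Given $d\in\sD$, take a left $\mathsf{A}$-approximation $d\rightlabel{\beta}a$ and complete it to a triangle $\tri{z}{d}{a}$. Since $\mathsf{A}$ is extension-closed, the Wakamatsu lemma (Lemma~\ref{lem:basic-props}(5)) yields $z\in{}\orth\mathsf{A}=0$, so $d\simeq a\in\mathsf{A}$. Therefore $\sD=\mathsf{A}=\add\extn{\sS,\Sigma^{-1}\sS,\ldots,\Sigma^{1-w}\sS}$, and as $\sS$ is $w$-orthogonal by hypothesis it is a $w$-simple-minded system.

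The only step that is not pure bookkeeping — and hence the main obstacle — is the covariant finiteness of the large subcategory $\mathsf{A}$ from covariant finiteness of $\extn{\sS}$ alone: one must observe that the closure of covariantly finite subcategories under the $*$-operation (equivalently, the relevant part of the proof of Lemma~\ref{lem:funct-finite}) does not in fact require the contravariant hypothesis. Once $\mathsf{A}$ is known to be extension-closed, summand-closed and covariantly finite, and ${}\orth\mathsf{A}$ has been identified with the Riedtmann condition, the Wakamatsu lemma does all the remaining work; the same is true, mutatis mutandis, for the $(1)\Leftrightarrow(3)$ half, where the self-duality of the simple-minded system condition under $\Sigma^{w-1}$ is the only extra ingredient.
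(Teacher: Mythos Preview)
Your proof is correct and is, in spirit, a cleaner repackaging of the paper's argument. The paper proves $(2)\Rightarrow(1)$ by a direct induction: given $d\neq 0$, it takes the maximal $k$ with $\Hom(\Sigma^k d,\sS)\neq 0$, forms the minimal left $\extn{\sS}$-approximation triangle of $\Sigma^k d$, and shows (using Wakamatsu, $w$-orthogonality, and the dual of Lemma~\ref{lem:torsion-pair}(1)) that the cocone has strictly more vanishing, eventually forcing $d\in\extn{\sS}*\cdots*\Sigma^{1-w}\extn{\sS}$. You instead package the whole iteration into the single statement ``$\mathsf{A}$ is covariantly finite'', identify ${}\orth\mathsf{A}$ with the right Riedtmann condition, and then apply Wakamatsu once to $\mathsf{A}$. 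This is more conceptual and makes the role of the torsion pair $({}\orth\mathsf{A},\mathsf{A})$ transparent; the paper's version, on the other hand, is entirely self-contained and does not need to revisit the hypotheses of Lemma~\ref{lem:funct-finite}.

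Two small remarks. First, your observation that the covariant half of Lemma~\ref{lem:funct-finite} needs only covariant finiteness of $\extn{\sS}$ is correct---the dual argument uses left approximations together with the dual of Lemma~\ref{lem:torsion-pair}(1), and the delicate step (the analogue of the ``$i=0$'' case) is handled by exactly that isomorphism---but it is worth saying explicitly, since the lemma as stated assumes functorial finiteness. The alternative citation of \cite[Prop.~2.1]{IY} is not quite apt: that result concerns closure under summands rather than functorial finiteness of $*$-products, and in general $\sX*\sY$ need not be covariantly finite just because $\sX$ and $\sY$ are; here the $w$-orthogonality of $\sS$ is genuinely used. Second, your handling of $(1)\Leftrightarrow(3)$ via the autoequivalence $\Sigma^{w-1}$ matches the paper's one-line reduction at the end of its proof.
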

\begin{proof}
The implications $(1) \implies (2)$ and $(1) \implies (3)$ follow from Corollary~\ref{cor:smsimpliesff} and \cite[Lemma 2.1]{CS17}. 

For the implication $(2) \implies (1)$, let $\sS$ be a right $w$-Riedtmann configuration and assume $\extn{\sS}$ is covariantly finite. By definition, $\sS$ is $w$-orthogonal. Let $0 \neq d \in \sD$. We want to prove that $d \in \extn{\sS} * \Sigma^{-1} \extn{\sS} * \cdots * \Sigma^{1-w} \extn{\sS}$.  

Since $\sS$ is a right $w$-Riedtmann configuration, we can take $k$ maximal with $0 \leq k \leq w-1$ such that $\Hom (\Sigma^k d, s) \neq 0$ for some $s \in \sS$. 
Since $\extn{\sS}$ is covariantly finite, we can consider the triangle occurring from a minimal left $\extn{\sS}$-approximation of $\Sigma^k d$: 
\[
\Sigma^{-1} s_k \to \Sigma^k d_k \to \Sigma^k d \to s_k.
\]
We claim that $\Hom(\Sigma^i d_k, \sS) = 0$ for $k \leq i \leq w-1$. By Wakamatsu's Lemma~\ref{lem:basic-props}, $\Hom(\Sigma^k d_k, \sS) = 0$.
By $w$-orthogonality of $\sS$ and maximality of $k$, we have $\Hom (\Sigma^i d_k, \sS) = 0$, for $k+2 \leq i \leq w-1$.  Finally, we have a short exact sequence:
\[
\xymatrix{
0 \ar[r] & \Hom (\Sigma^{k+1} d_k, \sS) \ar[r]^-{f} & \Hom (s_k, \sS) \ar[r]^-{g} & \Hom (\Sigma^k d,\sS) \ar[r] & 0,
}
\]
where $g$ is an isomorphism by the dual of Lemma~\ref{lem:torsion-pair}(1). Hence, since $f$ is a monomorphism and a zero map, it follows that $\Hom (\Sigma^{k+1} d_k, \sS) = 0$.

Finally, we show that if $d \in \sD$ satisfies $\Hom(\Sigma^i d, \sS) = 0$ for $k \leq i \leq w-1$, then $d \in \extn{\sS} * \cdots * \Sigma^{1-k}\extn{\sS}$. For $k = 1$, by the above we have $\Hom(\Sigma^i d_0, \sS) = 0$ for $0 \leq i \leq w-1$. Since $\sS$ is right $w$-Riedtmann, this implies that $d_0 = 0$, in which case $d \simeq s_0 \in \extn{\sS}$. For $1 < k \leq w-1$, by induction we have $d_k \in \extn{\sS} * \cdots * \Sigma^{1-k}\extn{\sS}$, whence it follows that $d \in \extn{\sS} * \cdots * \Sigma^{-k}\extn{\sS}$, completing the proof of the implication $(2) \implies (1)$.

The proof of $(3) \implies (1)$ is similar. Indeed, it is enough to prove that $d \in \Sigma^{w-1} \extn{\sS} * \Sigma^{w-2} \extn{\sS} * \cdots * \extn{\sS}$, whose proof is dual to the one above. 
\end{proof}

%===================================================================
% SECTION
\section{Simple-minded mutation pairs} \label{sec:mutation}
%===================================================================

In this section we introduce simple-minded mutation pairs as an analogue of the mutation pairs studied in \cite{IY}. The definition of simple-minded mutation goes back to \cite{Dugas} and \cite{Koenig-Yang}. In this article we employ the conventions of \cite{Dugas}.
To start, $\sS$ will simply be a collection of objects of $\sD$.

\begin{definition}[{\cite[Definition 4.1]{Dugas}}]
Let $\sS$ be a collection of objects in $\sD$ such that $\extn{\sS}$ is functorially finite in $\sD$. Let $d$ be an object of $\sD$.
\begin{enumerate}
\item The \textit{right mutation of $d$, $\Rmu{d}$, with respect to $\sS$} is obtained from the triangle:
\[
\Sigma^{-1} \Rmu{d} \too s_d \rightlabel{\alpha_d} \Sigma d \too \Rmu{d},
\]
where $\alpha_d \colon s_d \to \Sigma d$ is a minimal right $\extn{\sS}$-approximation.
\item The \textit{left mutation of $d$, $\Lmu{d}$, with respect to $\sS$} is obtained from the triangle:
\[
\Lmu{d} \too \Sigma^{-1} d \rightlabel{\alpha^d} s^d \too \Sigma \Lmu{d},
\]
where $\alpha^d \colon \Sigma^{-1} d \to s^d$ is a minimal left $\extn{\sS}$-approximation.
\end{enumerate}
For a subcategory $\sX \subseteq \sD$ we write $\Rmu{\sX} \coloneqq \{\Rmu{x} \mid x \in \sX\}$; analogously for $\Lmu{\sX}$.
\end{definition}

The following is an analogue of \cite[Definition 2.5]{IY}. Note that there are subtle differences in the setup, for example, in \cite{IY}, the analogue of $\sS$ is required to sit inside each part of the mutation pair.

\begin{definition}
Let $\sS$ be a full subcategory of $\sD$. A pair $(\sX,\sY)$ of subcategories of $\sD$ is an \emph{$\sS$-mutation pair} if  
$\sX = {}\orth \sS \orth \cap {}\orth(\Sigma^{-1} \sS) \cap \Sigma^{-1} (\extn{\sS} * \sY)$ and $\sY = {}\orth \sS \orth \cap (\Sigma \sS)\orth \cap \Sigma (\sX * \extn{\sS})$.
\end{definition}

The following lemma is contained implicitly in \cite[Proposition 7.6]{Koenig-Yang}, however, a proof is not explicitly given, so we give one for convenience.

\begin{lemma}\label{lem:approxmutationtriangle}
Let $(\sX, \sY)$ be an $\sS$-mutation pair.
\begin{enumerate}
\item For $x \in \sX$ consider the right mutation triangle $\Sigma^{-1} \Rmu{x} \rightlabel{f} s_x \rightlabel{g} \Sigma x \too \Rmu{x}$. The morphism $f \colon \Sigma^{-1} \Rmu{x} \to s_x$ is a minimal left $\extn{\sS}$-approximation.
\item For $y \in \sY$ consider the left mutation triangle $\Lmu{y} \too \Sigma^{-1} y \rightlabel{f} s^y \rightlabel{g} \Sigma \Lmu{y}$. The morphism $g \colon s^y \to \Sigma \Lmu{y}$ is a minimal right $\extn{\sS}$-approximation.
\end{enumerate}
\end{lemma}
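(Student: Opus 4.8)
The plan is to give the argument for (1) in full and to obtain (2) by formal duality. Observe at the outset that neither part needs the whole mutation-pair hypothesis: all that will be used is that, for an $\sS$-mutation pair, $x \in {}\orth\sS \cap {}\orth(\Sigma^{-1}\sS)$ for every $x \in \sX$ and $y \in \sS\orth \cap (\Sigma\sS)\orth$ for every $y \in \sY$, together with the functorial finiteness of $\extn\sS$ (without which the minimal approximations $\alpha_x$ and $\alpha^y$ occurring in the mutation triangles would not exist in the first place).

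For (1), fix $x \in \sX$ and rotate the right mutation triangle to the form $\trilabels{x}{\Sigma^{-1}\Rmu{x}}{s_x}{a}{f}{g}$, so that $f$ becomes the central morphism while $g = \alpha_x \colon s_x \to \Sigma x$ remains the prescribed minimal right $\extn\sS$-approximation; in particular $s_x \in \extn\sS$. I would first verify the approximation property. For $s \in \extn\sS$, applying $\Hom(-,s)$ to this triangle gives an exact sequence $\Hom(s_x,s) \xrightarrow{f^*} \Hom(\Sigma^{-1}\Rmu{x},s) \xrightarrow{a^*} \Hom(x,s)$, so $f^*$ is surjective as soon as $\Hom(x,s)=0$; and $x \in {}\orth\sS$, together with the homological nature of $\Hom(x,-)$, yields $\Hom(x,\extn\sS)=0$. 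Hence $f$ is a left $\extn\sS$-approximation of $\Sigma^{-1}\Rmu{x}$. To see it is minimal, suppose not; in a Krull-Schmidt category this means $s_x \simeq s_1 \oplus \tilde s$ with $\tilde s \neq 0$ and, under this decomposition, $f = \begin{pmat} f_1 \\ 0 \end{pmat}$. An application of the octahedral axiom to the factorisation $\Sigma^{-1}\Rmu{x} \xrightarrow{f_1} s_1 \hookrightarrow s_1 \oplus \tilde s$ (the inclusion splits, so the resulting connecting morphism vanishes) identifies the cone of $f$, namely $\Sigma x$, with $\mathrm{cone}(f_1) \oplus \tilde s$. Thus $\tilde s$ is a common direct summand of $s_x$ and of $\Sigma x$, so the composite $\Sigma x \twoheadrightarrow \tilde s \hookrightarrow s_x$ is a nonzero morphism. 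However $s_x \in \extn\sS$ and $x \in {}\orth(\Sigma^{-1}\sS)$ force $\Hom(\Sigma x, s_x) \cong \Hom(x, \Sigma^{-1} s_x) = 0$, a contradiction; so $f$ is a minimal left $\extn\sS$-approximation, which is (1).

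Statement (2) is then obtained by running the same argument with all arrows reversed: for $y \in \sY$ the left mutation triangle rotates to a triangle in which $g \colon s^y \to \Sigma\Lmu{y}$ is the central morphism and $f = \alpha^y$ is the given minimal left $\extn\sS$-approximation (so $s^y \in \extn\sS$), and the conditions $y \in \sS\orth$ and $y \in (\Sigma\sS)\orth$ now take over the roles played above by $x \in {}\orth\sS$ and $x \in {}\orth(\Sigma^{-1}\sS)$, delivering first that $g$ is a right $\extn\sS$-approximation of $\Sigma\Lmu{y}$ and then that it is right minimal. I expect the minimality step to be the only delicate point: one has to recognise that a failure of minimality manufactures a superfluous summand living at once inside $s_x$ and inside the cone $\Sigma x$, and that it is exactly the condition $x \in {}\orth(\Sigma^{-1}\sS)$ --- not the condition $x \in {}\orth\sS$ that governs the approximation property --- which excludes this. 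The rest is a straightforward reading of the long exact sequences attached to the mutation triangles.
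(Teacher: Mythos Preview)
Your proof is correct and follows essentially the same route as the paper: both use $x \in {}\orth\sS$ to verify that $f$ is a left $\extn{\sS}$-approximation, then argue minimality by contradiction, observing that a failure of minimality forces a nonzero object of $\extn{\sS}$ to appear as a direct summand of $\Sigma x$, which is forbidden by $x \in {}\orth(\Sigma^{-1}\sS)$. The only cosmetic difference is that the paper invokes Lemma~\ref{lem:basic-props}(2) to obtain the block decomposition of the triangle, whereas you spell out the cone splitting via the octahedral axiom.
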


\begin{proof}
We prove statement $(1)$, statement $(2)$ is analogous. We first claim that $\Sigma x$ contains no summands in $\sS$. Suppose, for a contradiction, that $\Sigma x = x' \oplus s$ for some $s \in \sS$, and observe that $\Hom(\Sigma x, s) = \Hom(x' \oplus s, s) \neq 0$ contradicts $\sX \subseteq {}\orth (\Sigma^{-1} \sS)$.

The fact that $x \in \sX$ and $\Hom (\sX,\sS) = 0$ immediately gives $f$ is a left $\extn{\sS}$-approximation, we just need to show minimality. Suppose $f$ is not left minimal. Then by Lemma~\ref{lem:basic-props}(2) we have the following isomorphism of distinguished triangles in $\sD$:
\[
\xymatrix@!R=8px{
\Sigma^{-1} \Rmu{x} \ar[r]^-{f} \ar[d]^-{\sim} & s_x \ar[r]^-{g} \ar[d]^-{\sim}                    & \Sigma x \ar[r] \ar[d]^-{\sim} & \Rmu{x} \ar[d]^-{\sim} \\
\Sigma^{-1} \Rmu{x} \ar[r]_-{\begin{pmat} f' \\0 \end{pmat}}   & s_1 \oplus s_2 \ar[r]                               & x' \oplus s_2 \ar[r]                 & \Rmu{x}.
}
\]
Thus, $s_2$ is a direct summand of $\Sigma x$. Hence, by above, $s_2 = 0$ and $f$ is a left minimal $\extn{\sS}$-approximation of $\Sigma^{-1} \Rmu{x}$, as claimed.
\end{proof}

The following straightforward lemma will be useful in shortening arguments throughout the article.

\begin{lemma} \label{lem:zeromaptoZ}
Suppose $(\sX,\sY)$ is an $\sS$-mutation pair.
Let $\trilabels{s}{d}{y}{\alpha}{\beta}{}$ be a triangle with $s \in \extn{\sS}$ and $y \in \sY$.
\begin{enumerate}
\item If $f,g \colon y \to y'$ are morphisms in $\sY$ such that $(f-g)\beta = 0$, then $f = g$.
\item If $\sigma, \tau \colon s' \to s$ are morphisms in $\extn{\sS}$ such that $\alpha(\sigma - \tau) = 0$, then $\sigma = \tau$.
\end{enumerate}
\end{lemma}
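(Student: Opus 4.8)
The statement is Lemma~\ref{lem:zeromaptoZ}, asserting that if $(\sX,\sY)$ is an $\sS$-mutation pair and $\trilabels{s}{d}{y}{\alpha}{\beta}{}$ is a triangle with $s\in\extn{\sS}$ and $y\in\sY$, then $\beta$ is ``left-faithful'' on morphisms out of $y$ in $\sY$ (part (1)) and $\alpha$ is ``right-faithful'' on morphisms into $s$ in $\extn{\sS}$ (part (2)). Both statements are, at heart, an application of the long exact sequence obtained by applying a $\Hom$-functor to the triangle, combined with the defining orthogonality relations of the mutation pair. I would prove part (1) and remark that part (2) is dual.

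\textbf{Part (1).} Complete the triangle to $\trilabels{s}{d}{y}{\alpha}{\beta}{\gamma}$ with $\gamma\colon y\to\Sigma s$. It suffices to show that if $h\colon y\to y'$ is a morphism with $y'\in\sY$ and $h\beta=0$, then $h=0$ (apply this to $h=f-g$). Apply the functor $\Hom(-,y')$ to the triangle: the relevant piece of the long exact sequence is
\[
\Hom(\Sigma s, y') \rightlabel{\Hom(\gamma,y')} \Hom(y,y') \rightlabel{\Hom(\beta,y')} \Hom(d,y'),
\]
so $h\beta=0$ forces $h$ to factor through $\gamma$, i.e. $h = h'\gamma$ for some $h'\colon\Sigma s\to y'$. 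Since $s\in\extn{\sS}$, also $\Sigma s\in\Sigma\extn{\sS}$, and by the definition of an $\sS$-mutation pair $\sY\subseteq (\Sigma\sS)\orth$; more precisely I would argue $\Hom(\Sigma\extn{\sS},\sY)=0$. This vanishing gives $h'=0$, hence $h=0$, as required. The only point needing care is the passage from $\Hom(\Sigma\sS,\sY)=0$ (which is what ``$\sY\subseteq(\Sigma\sS)\orth$'' literally says) to $\Hom(\Sigma\extn{\sS},\sY)=0$: this follows because $\Hom(-,y')$ is a cohomological functor, so the class of objects $e$ with $\Hom(\Sigma e, y')=0$ is closed under extensions (given a triangle $\tri{e_1}{e}{e_2}$ with $\Hom(\Sigma e_1,y')=\Hom(\Sigma e_2,y')=0$, the long exact sequence sandwiches $\Hom(\Sigma e,y')$ between two zeros), and it contains $\sS$, hence contains $\extn{\sS}$.

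\textbf{Part (2).} This is the dual statement: given $\sigma,\tau\colon s'\to s$ in $\extn{\sS}$ with $\alpha(\sigma-\tau)=0$, apply $\Hom(s',-)$ to the triangle $\Sigma^{-1}y\rightlabel{-\Sigma^{-1}\gamma} s\rightlabel{\alpha} d\rightlabel{\beta} y$ (rotated once to the left) to get
\[
\Hom(s',\Sigma^{-1}y)\rightlabel{} \Hom(s',s)\rightlabel{\Hom(s',\alpha)} \Hom(s',d),
\]
so $\alpha(\sigma-\tau)=0$ implies $\sigma-\tau$ factors through $-\Sigma^{-1}\gamma\colon\Sigma^{-1}y\to s$; but $\Hom(\extn{\sS},\Sigma^{-1}\sY)=0$ since $\sX\subseteq{}\orth(\Sigma^{-1}\sS)$ forces, dually, $\sY\subseteq(\Sigma\sS)\orth$ hence $\Sigma^{-1}\sY\subseteq\sS\orth$, and the same extension-closure argument upgrades this to $\Hom(\extn{\sS},\Sigma^{-1}\sY)=0$. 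Therefore $\sigma-\tau=0$.

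\textbf{Main obstacle.} There is no serious obstacle here; this is a routine diagram-chase lemma. The one genuinely substantive point — and the only place I would be careful to spell out — is the upgrade from an orthogonality statement about $\sS$ to the corresponding statement about $\extn{\sS}$, for which one invokes that ``$\Hom(\Sigma e,y')=0$'' (equivalently ``$\Hom(e,\Sigma^{-1}y')=0$'') defines an extension-closed subcategory because $\Hom$ is cohomological; alternatively one could simply cite that $\sY\subseteq{}\orth\sS\orth$ implies $\Hom(\extn\sS,\sY)=0$ directly, and shift. I would phrase the proof to handle both parts uniformly by noting that part (2) follows from part (1) applied in the opposite category $\sD\orth$, where an $\sS\orth$-mutation pair $(\sY\orth,\sX\orth)$ (with the roles of $\sX$ and $\sY$ swapped) arises from the same data.
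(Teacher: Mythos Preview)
Your proof is correct and follows essentially the same approach as the paper: factor $f-g$ (resp.\ $\sigma-\tau$) through the connecting morphism and kill the factorisation using $\sY\subseteq(\Sigma\sS)\orth$. You are simply more explicit than the paper about the extension-closure step upgrading $(\Sigma\sS)\orth$ to $(\Sigma\extn{\sS})\orth$; one small remark: in part~(2) you need not invoke $\sX\subseteq{}\orth(\Sigma^{-1}\sS)$ at all, since $\sY\subseteq(\Sigma\sS)\orth$ is already part of the definition of an $\sS$-mutation pair and is exactly what gives $\Hom(\extn{\sS},\Sigma^{-1}\sY)=0$.
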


\begin{proof}
We prove the first statement, the second is analogous. By assumption, we have the following factorisation.
\[
\xymatrix{
s \ar[r] & d \ar[r]^-{\beta} \ar@{..>}[dr]_-{0} & y \ar[d]^-{f-g} \ar[r] & \Sigma s \ar@/^1pc/@{-->}[dl]^-{\exists} \\
           &                                                      & y'                            &
}
\]
Now $y' \in \sY \subset (\Sigma \sS)\orth$ implies that $f-g = 0$, as required.
\end{proof}

The next lemma will be used to define the shift functor in a pretriangulated category obtained from an $\sS$-mutation pair in the next section. 
Before stating it, we impose the blanket setup that will be used for the remainder of the article.

\begin{setup} \label{blanket-setup}
Let $\sS$ be an orthogonal collection of objects of $\sD$ such that $\extn{\sS}$ is functorially finite in $\sD$. Suppose one of the two conditions holds
\begin{enumerate}
\item $\sS$ is an $\SSS_{-1}$-subcategory; or,
\item $\Hom(\Sigma \sS,\sS) = 0$.
\end{enumerate}
\end{setup}

\begin{lemma} \label{lem:mutation}
Assume the hypotheses of Setup~\ref{blanket-setup}. Let $(\sX,\sY)$ be an $\sS$-mutation pair. The following assertions hold.
\begin{enumerate}
\item $\sX = \Lmu{\sY}$ and $\sY = \Rmu{\sX}$.
\item There is an equivalence of categories $G \colon \sX \to \sY$.
\end{enumerate}
\end{lemma}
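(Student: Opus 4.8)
The plan is to prove the two statements in tandem, using the mutation triangles as the explicit witnesses. For statement (1), I would first show $\Rmu{\sX} \subseteq \sY$ and $\Lmu{\sY} \subseteq \sX$, then show these are in fact surjective onto $\sY$ and $\sX$ respectively.

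\emph{Step 1: $\Rmu{\sX} \subseteq \sY$.} Take $x \in \sX$ and the right mutation triangle $\Sigma^{-1}\Rmu{x} \rightlabel{f} s_x \rightlabel{g} \Sigma x \too \Rmu{x}$. By Lemma~\ref{lem:approxmutationtriangle}(1), $f$ is a minimal left $\extn{\sS}$-approximation. I must verify $\Rmu{x} \in {}\orth\sS\orth \cap (\Sigma\sS)\orth \cap \Sigma(\sX * \extn{\sS})$. The last membership is immediate: rotating the mutation triangle gives $x \to \Rmu{x} \to \Sigma s_x \to \Sigma x$ wait — more precisely, from $\Sigma^{-1}\Rmu{x} \to s_x \to \Sigma x \to \Rmu{x}$ we read $\Rmu{x} \in \Sigma x * \Sigma s_x = \Sigma(x * s_x) \subseteq \Sigma(\sX * \extn{\sS})$. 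For $\Rmu{x} \in (\Sigma\sS)\orth$: apply $\Hom(-, \Sigma\sS)$... actually $\Hom(\Sigma\sS, \Rmu{x}) = 0$ follows from the long exact sequence $\Hom(\Sigma\sS, \Sigma x) \to \Hom(\Sigma\sS,\Rmu{x}) \to \Hom(\Sigma\sS, \Sigma s_x)$; here $\Hom(\Sigma\sS,\Sigma x) = \Hom(\sS, x) = 0$ since $\sX \subseteq {}\orth\sS\orth$ — wait, that gives $\Hom(x,\sS)=0$ not $\Hom(\sS,x)=0$. Let me instead use: under Setup~\ref{blanket-setup}(a), $\sS$ is an $\SSS_{-1}$-subcategory, so $\Hom(\Sigma\sS, \Sigma x) \simeq \Hom(\sS, x) \simeq D\Hom(x, \SSS\sS) = D\Hom(x, \Sigma^{-1}\sS)$, which vanishes since $x \in \sX \subseteq {}\orth(\Sigma^{-1}\sS)$; under Setup~\ref{blanket-setup}(b) one argues directly using $\Hom(\Sigma\sS,\sS) = 0$ applied to $f$ being a left $\extn{\sS}$-approximation together with $\Hom(\Sigma\sS, x) = 0$ (which needs care — I would use the $\SSS_{-1}$ route as the main line and adapt). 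For $\Rmu{x} \in {}\orth\sS\orth$: $\Hom(\sS, \Rmu{x})$ sits between $\Hom(\sS, \Sigma x)$ and $\Hom(\sS, \Sigma s_x)$; the latter is $0$ by $\sS$-orthogonality of $\extn{\sS}$ (since $s_x \in \extn{\sS}$, $\Hom(\Sigma^{-1}\sS, s_x) = \Hom(\sS,\Sigma s_x) = 0$... this needs $\extn{\sS} \subseteq (\Sigma^{-1}\sS)\orth$, which under Setup(a) follows from Serre duality, under Setup(b) is immediate); and $\Hom(\sS, \Sigma x) = 0$ as above. Symmetrically $\Lmu{\sY} \subseteq \sX$.

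\emph{Step 2: surjectivity.} Given $y \in \sY$, by definition $y \in \Sigma(\sX * \extn{\sS})$, so $\Sigma^{-1}y$ fits in a triangle $x \to \Sigma^{-1}y \to s \to \Sigma x$ with $x \in \sX$, $s \in \extn{\sS}$. Rotating: $\Sigma^{-1}y \to s \to \Sigma x \to y$. I would like to identify this (after minimalising the approximation $s \to \Sigma x$) with the mutation triangle of $x$, hence $y \cong \Rmu{x}$. The key point is that $s \to \Sigma x$ is automatically a \emph{right} $\extn{\sS}$-approximation: applying $\Hom(\extn{\sS}, -)$ to $\Sigma^{-1}y \to s \to \Sigma x \to y$, surjectivity of $\Hom(\extn{\sS}, s) \to \Hom(\extn{\sS}, \Sigma x)$ follows once $\Hom(\extn{\sS}, y) = 0$, i.e. $\Hom(\Sigma^{-1}\sS, y) = 0$; but $y \in \sY \subseteq (\Sigma\sS)\orth$ gives $\Hom(\Sigma\sS, y) = 0$, and under Setup(a) Serre duality converts $\Hom(\Sigma^{-1}\sS, y) = \Hom(\sS, \Sigma y)... $ hmm, I want $\Hom(\sS, \Sigma^{-1}... )$. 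Actually the needed vanishing $\Hom(\extn{\sS}, y)=0$ means $\Hom(\sS, \Sigma^i y) = 0$ — no: $\Hom(\extn{\sS},y)$ built from $\Hom(\sS, y)$; and $y \in \sY \subseteq {}\orth\sS\orth$ gives precisely $\Hom(\sS, y) = 0$, hence $\Hom(\extn{\sS}, y) = 0$ since $\extn{\sS} = \bigcup(\sS)_n$ (Lemma~\ref{lem:Dugas}(1)) and $\Hom(-, y)$ is half-exact. Good. Then minimalising $s \to \Sigma x$ via Lemma~\ref{lem:basic-props} only changes $s$ by a summand and leaves the triangle isomorphic, so $y \cong \Rmu{x}$ with $x \in \sX$. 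Thus $\sY = \Rmu{\sX}$; dually $\sX = \Lmu{\sY}$, proving (1).

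\emph{Step 3: the equivalence $G$.} Define $G = \Rmu{-} \colon \sX \to \sY$ on objects. To make it a functor I would use that each mutation triangle's approximation morphism is both a minimal right $\extn{\sS}$-approximation ($\alpha_x$) and, by Lemma~\ref{lem:approxmutationtriangle}(1), that $f\colon \Sigma^{-1}\Rmu{x} \to s_x$ is a minimal left $\extn{\sS}$-approximation; a morphism $x \to x'$ in $\sX$ lifts (non-uniquely) to a morphism of mutation triangles, and Lemma~\ref{lem:zeromaptoZ} pins down the induced map $\Rmu{x} \to \Rmu{x'}$ uniquely (the relevant zero-map lemma kills the ambiguity in the lift). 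Functoriality then follows from uniqueness. That $G$ is an equivalence follows from Step 2: $\Lmu{-}\colon \sY \to \sX$ is a quasi-inverse, because $\Lmu{\Rmu{x}}$ is obtained by completing the minimal left $\extn{\sS}$-approximation $f$ of $\Sigma^{-1}\Rmu{x}$, and the mutation triangle for $x$ already exhibits $f$ as exactly that — so $\Lmu{\Rmu{x}} \cong x$ naturally, and symmetrically $\Rmu{\Lmu{y}} \cong y$.

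\emph{Main obstacle.} The delicate point is Step 3: verifying well-definedness and functoriality of $G$ on morphisms, i.e. that the non-unique lift of $x \to x'$ to the mutation triangles induces a \emph{well-defined} map $\Rmu{x} \to \Rmu{x'}$. This is where Lemma~\ref{lem:zeromaptoZ} and the minimality statements of Lemma~\ref{lem:approxmutationtriangle} do the real work, and where the two alternatives of Setup~\ref{blanket-setup} must both be accommodated — in case (b) one does not have Serre duality, so the perpendicularity conditions must be extracted directly from $\Hom(\Sigma\sS,\sS)=0$ and the definition of the mutation pair, which requires slightly more care with the long exact sequences.
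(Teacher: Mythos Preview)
Your overall strategy matches the paper's: establish both containments for (1), then define $G$ on objects via the right mutation triangle and on morphisms via $\TRthree$ together with Lemma~\ref{lem:zeromaptoZ}, with quasi-inverse coming from Lemma~\ref{lem:approxmutationtriangle}. Step 3 is essentially the paper's argument and is fine.

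There is, however, a genuine gap in Step 1. Your long exact sequence for $\Hom(\Sigma\sS,\Rmu{x})=0$ does not close: the outer term $\Hom(\Sigma\sS,\Sigma s_x)=\Hom(\sS,s_x)$ is typically nonzero, so neither the naive sequence nor the Serre-duality rewriting of $\Hom(\Sigma\sS,\Sigma x)$ helps. What is missing is precisely Lemma~\ref{lem:torsion-pair}(1): since $\alpha_x\colon s_x\to\Sigma x$ is a minimal right $\extn{\sS}$-approximation, $\Hom(\sS,\alpha_x)$ is an \emph{isomorphism}, and feeding this into the long exact sequence from $\Sigma^{-1}\Rmu{x}\to s_x\to\Sigma x$ (together with $\Hom(\sS,x)=0$) kills $\Hom(\sS,\Sigma^{-1}\Rmu{x})$. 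The paper uses exactly this (in dual form, working with $\Lmu{\sY}\subseteq\sX$), and the same lemma is what makes the two alternatives of Setup~\ref{blanket-setup} go through uniformly for the remaining perpendicularity $\Rmu{x}\in{}\orth\sS$ (via part (3) of Lemma~\ref{lem:torsion-pair} in the $\SSS_{-1}$ case). Incidentally, your self-doubt about $\Hom(\sS,x)=0$ was unwarranted: $\sX\subseteq{}\orth\sS\orth$ gives both $\Hom(\sS,x)=0$ and $\Hom(x,\sS)=0$.

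A smaller issue in Step 2: ``minimalising only changes $s$ by a summand and leaves the triangle isomorphic'' is not correct --- dropping a summand of $s$ genuinely changes the triangle. The right statement (dual to the paper's) is that if the approximation $s\to\Sigma x$ were not minimal, then $\Sigma^{-1}y$ would acquire a nonzero summand $s'\in\extn{\sS}$, whence $\Sigma s'$ is a summand of $y$; but $\Hom(\Sigma\sS,\Sigma s')=\Hom(\sS,s')\neq 0$ contradicts $y\in(\Sigma\sS)\orth$. So the approximation is already minimal and $y\cong\Rmu{x}$.
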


\begin{proof}
For the first statement, we have 
\[
\Lmu{\sY} = \{ x \mid \Sigma^{-1} s^y \too x \rightlabel{\beta^y} \Sigma^{-1} y \rightlabel{\alpha^y} s^y \text{ for } y \in \sY \}, 
\]
where $\alpha^y$ is a minimal left $\extn{\sS}$-approximation.
By Lemma~\ref{lem:basic-props}(5), we have $\Lmu{\sY} \subseteq {}\orth \sS$. By definition, we have $\Lmu{\sY} \subseteq \Sigma^{-1} (\extn{\sS}*\sY)$. 

We need to show that $\Lmu{\sY} \subseteq {}\orth (\Sigma^{-1} \sS) \cap \sS \orth$. 
By applying $\Hom(-, \sS)$ to the triangle defining $x$ in $\Lmu{\sY}$, we have the following exact sequence:
\[
\xymatrix{
\Hom(y,\sS) \ar[r] &  \Hom(\Sigma x, \sS) \ar[r]^-{(\beta^y,\sS)} & \Hom(s^y, \sS) \ar[r]^-{(\alpha^y,\sS)} & \Hom(\Sigma^{-1} y, \sS) \ar[r] & 0.}
\]
By the dual of Lemma~\ref{lem:torsion-pair}(1), $\Hom(\alpha^y,\sS)$ is an isomorphism. Hence $\Hom(\beta^y,\sS) = 0$. But $y \in {}\orth \sS$, which implies that $\Hom(\beta^y,\sS)$ is a monomorphism, and so $x \in {}\orth (\Sigma^{-1} \sS)$.
Now, to show that $x \in \sS \orth$, first observe that $\Sigma^{-1} y \in \sS \orth$. If $\Hom(\Sigma \sS,\sS) = 0$ holds, then $x \in \sS\orth$ is immediate. If $\sS$ is an $\SSS_{-1}$-subcategory then applying the dual of Lemma~\ref{lem:torsion-pair}(3) also gives $x \in \sS\orth$.

For the reverse inclusion, suppose $x \in \sX$. Since $x \in \Sigma^{-1}(\extn{\sS} * \sY)$ there is a triangle
\begin{equation} \label{y-approx}
x \too \Sigma^{-1} y \rightlabel{f} s \too \Sigma x
\end{equation}
with $s \in \extn{\sS}$ and $y \in \sY$. The fact that $x \in {}\orth \sS$ means that $f \colon \Sigma^{-1} y \to s$ is a left $\extn{\sS}$-approximation. We claim that it is a minimal approximation. Indeed, if not, Lemma~\ref{lem:basic-props}(2) implies that \eqref{y-approx} is isomorphic to
\[
x \too \Sigma^{-1} y \rightlabel{\begin{pmat} f' \\ 0 \end{pmat}} s_1 \oplus s_2 \too \Sigma x,
\]
in which case $x \simeq x' \oplus \Sigma^{-1} s_2$ with $s_2 \in \extn{\sS}$ (see for example \cite[Lemma 3.1]{CSP}), giving a contradiction. A similar argument shows the statement for $\sY$. 

Now we turn to the second statement. We define a functor $G \colon \sX \to \sY$ as follows. 
For each $x \in \sX$ we fix a triangle
\[
s_x \rightlabel{\alpha_x} \Sigma x \rightlabel{\beta_x} Gx \rightlabel{\gamma_x} \Sigma s_x
\]
in which $\alpha_x$ is a minimal right $\extn{\sS}$-approximation. Note that $Gx \in \sY$ by part (1) of the lemma.

We now define $G$ on morphisms. Let $f \colon x \to x'$ in $\sX$. We explain below how to obtain the following commutative diagram from the morphism $\Sigma f$.
\[
\xymatrix@!R=8px{
\Sigma^{-1} Gx \ar@{-->}[d]_-{\Sigma^{-1} g} \ar[r]^-{-\Sigma^{-1} \gamma_x} & s_x \ar[r]^-{\alpha_x} \ar@{..>}[d]^-{\sigma} & \Sigma x \ar[d]^-{\Sigma f} \ar[r]^-{\beta_x} & Gx \ar@{-->}[d]^-{g} \\
\Sigma^{-1} Gx' \ar[r]_-{-\Sigma^{-1} \gamma_{x^\prime}}                                              & s_{x'} \ar[r]_-{\alpha_{x'}}                              & \Sigma x' \ar[r]_-{\beta_{x^\prime}}                           & Gx'
}
\]
First observe that the dotted arrow $\sigma \colon s_x \to s_{x'}$ exists because $\alpha_{x'}$ is a right $\extn{\sS}$-approximation; uniqueness follows by Lemma~\ref{lem:zeromaptoZ}(2). The map $g \colon Gx \to Gx'$ exists by $\TRthree$; uniqueness of $g$ follows from Lemma~\ref{lem:zeromaptoZ}(1). We therefore define $Gf = g$.
The functor $H \colon \sY \to \sX$ is defined dually. 

We now show that $HG \simeq 1_{\sX}$.
Let $x \in \sX$. Since $\alpha_x \colon s_x \to \Sigma x$ is a minimal right $\extn{\sS}$-approximation, it follows from Lemma~\ref{lem:approxmutationtriangle}(1) that $-\Sigma^{-1} \gamma_x \colon \Sigma^{-1} Gx \to s_x$ is a minimal left $\extn{\sS}$-approximation. 
Hence, we have a diagram in which the bottom row is the fixed minimal left $\extn{\sS}$-approximation triangle for $\Sigma^{-1} Gx$ used to construct $H$.
\[
\xymatrix@!R=8px{
x \ar@{-->}[d]_-{\varphi_x} \ar[r]^-{-\Sigma^{-1} \beta_x} & \Sigma^{-1} Gx \ar@{=}[d] \ar[r]^-{-\Sigma^{-1} \gamma_x} & s_x \ar@{-->}[d]_-{\theta_x} \ar[r]^-{\alpha_x} & \Sigma x \ar@{-->}[d]^-{\Sigma \varphi_x}\\
HGx \ar[r]_-{\beta^{Gx}} & \Sigma^{-1} Gx \ar[r]_-{\alpha^{Gx}} & s^{Gx} \ar[r]_-{\gamma^{Gx}} & \Sigma HGx
}
\]
By Lemma~\ref{lem:basic-props}(1), $\theta_x$ is an isomorphism; it is unique making the central square commute by the dual of Lemma~\ref{lem:zeromaptoZ}(2). It then follows that $\varphi_x$ exists and is an isomorphism; $\varphi_x$ is also unique making the left square commute by the dual of Lemma~\ref{lem:zeromaptoZ}(1). Hence $x \simeq HG x$.

Now, let $f \colon x \to x^\prime$ be a morphism in $\sX$. We need to show that $HG(f) \varphi_x = \varphi_{x^\prime} f$. The map $H(Gf)$ is defined by the following diagram:
\[
\xymatrix@!R=8px{
HG(x) \ar[d]_-{H(Gf)} \ar[r]^-{\beta^{Gx}} & \Sigma^{-1} Gx \ar[d]_-{\Sigma^{-1} Gf} \ar[r]^-{\alpha^{Gx}} & s^{Gx} \ar[d] \ar[r]^-{\gamma^{Gx}} & \Sigma HG(x) \ar[d] \\
HG(x^\prime) \ar[r]_-{\beta^{Gx^\prime}} & \Sigma^{-1} Gx^\prime \ar[r]_-{\alpha^{Gx^\prime}} & s^{Gx^\prime} \ar[r]_-{\gamma^{Gx^\prime}} & \Sigma HG(x^\prime).
}
\]
Now, $\beta^{Gx^\prime} HG(f) \varphi_x = (\Sigma^{-1} Gf) \beta^{Gx} \varphi_x = -(\Sigma^{-1} Gf) (\Sigma^{-1} \beta_x) = - (\Sigma^{-1} \beta_{x^\prime}) f = \beta^{Gx^\prime} \varphi_{x^\prime} f$. Hence, by the dual of Lemma~\ref{lem:zeromaptoZ}(1) it follows that $HG(f) \varphi_x = \varphi_{x^\prime} f$, as required. Similarly, we can show that $GH \simeq 1_\sY$. 
\end{proof}

%===================================================================
%Section
\section{Pretriangulated categories from simple-minded mutation pairs} \label{sec:pretriangulated}
%===================================================================

Throughout this section $\sD$ will be a Hom-finite $\kk$-linear triangulated category. The aim of this section is to establish the following theorem. Throughout the section we shall assume without further comment that the hypotheses of the theorem hold.

\begin{theorem} \label{thm:pretriangulated}
Assume the hypotheses of Setup~\ref{blanket-setup}. Let $\sZ$ be a subcategory of $\sD$ such that $(\sZ,\sZ)$ is an $\sS$-mutation pair satisfying, 
\begin{itemize}
\item[$\Zone$] $\sZ$ is closed under extensions and direct summands;  
\item[$\Ztwo$] the cones in $\sD$ of maps in $\sZ$ lie in $\extn{\sS} * \sZ$; and
\item[${\bf (Z2')}$] the cocones in $\sD$ of maps in $\sZ$ lie in $\sZ * \extn{\sS}$.
\end{itemize}
Then there is a functor $\extn{1} \colon \sZ \to \sZ$ and for each morphism $f \colon x \to y$ in $\sZ$ there is a diagram $x \rightlabel{f} y \too z_f \too x\extn{1}$ giving rise to a class of triangles $\Delta$ which makes $\sZ$ into a pretriangulated category.
\end{theorem}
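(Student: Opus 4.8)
The plan is to transport the mutation machinery of Section~\ref{sec:mutation} to $\sZ$. First I would apply Lemma~\ref{lem:mutation} with $\sX = \sY = \sZ$, which is legitimate because $(\sZ,\sZ)$ is an $\sS$-mutation pair: this produces an additive autoequivalence $G \colon \sZ \to \sZ$, which I rename as the functor $\extn{1}$ (so $x\extn{1} = Gx$), with quasi-inverse $\extn{-1}$. By construction each $x \in \sZ$ is equipped with a fixed triangle $\trilabels{s_x}{\Sigma x}{x\extn{1}}{\alpha_x}{\beta_x}{\gamma_x}$ in $\sD$ in which $\alpha_x$ is a minimal right $\extn{\sS}$-approximation. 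I would also record at this point that $\sZ$ is itself a Hom-finite, Krull--Schmidt, $\kk$-linear additive category: it is closed under summands by $\Zone$, and closed under biproducts because a split triangle exhibits $x \oplus x'$ as an extension of $x'$ by $x$, so $\Zone$ forces $x \oplus x' \in \sZ$.

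Next I would build the class $\Delta$. Given $f \colon x \to y$ in $\sZ$, complete $f$ to a triangle $\trilabels{x}{y}{\tilde z}{f}{g_0}{h_0}$ in $\sD$. By $\Ztwo$ we have $\tilde z \in \extn{\sS} * \sZ$; since $(\extn{\sS},\sS\orth)$ is a torsion pair and $\sZ \subseteq \sS\orth$, uniqueness of torsion decompositions forces the $\sS\orth$-part of $\tilde z$ to lie in $\sZ$, so there is a canonical triangle $\trilabels{s}{\tilde z}{z_f}{a}{b}{c}$ with $s \in \extn{\sS}$ and $z_f \in \sZ$. Since $\sZ \subseteq \sS\orth = \extn{\sS}\orth$ we get $\Hom(s, x\extn{1}) = 0$, so $\beta_x h_0$ factors through $b$, say $\beta_x h_0 = e b$, and the factorisation $e \colon z_f \to x\extn{1}$ is unique because $\Hom(\Sigma s, x\extn{1}) = 0$ — which holds in case~(1) of Setup~\ref{blanket-setup} by Serre duality together with the $\SSS_{-1}$-stability of $\extn{\sS}$, and in case~(2) by a short induction along an $\sS$-composition series of $s$ using $x\extn{1} \in (\Sigma\sS)\orth$. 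I then declare $x \rightlabel{f} y \rightlabel{bg_0} z_f \rightlabel{e} x\extn{1}$ to be a \emph{standard triangle}, and take $\Delta$ to be the class of all diagrams in $\sZ$ isomorphic to a standard one; in particular, since the cone of $\id_x$ is $0$, the diagram $x \rightlabel{\id_x} x \too 0 \too x\extn{1}$ is standard.

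With $\extn{1}$ and $\Delta$ in place, $\TRone$ is immediate. For $\TRthree$, given a commutative square between the base maps of two standard triangles, I would lift it — via $\TRthree$ in $\sD$ — to a morphism of the two defining $\sD$-triangles; the induced morphism on $\sD$-cones carries $s \in \extn{\sS}$ into $s'$, hence descends to a morphism $z_f \to z_{f'}$, and the required commutativities are then pinned down using Lemma~\ref{lem:zeromaptoZ} and its dual. The real content is $\TRtwo$: I must show that the rotation $y \rightlabel{g} z \rightlabel{h} x\extn{1} \rightlabel{-f\extn{1}} y\extn{1}$ of a standard triangle $x \rightlabel{f} y \rightlabel{g} z \rightlabel{h} x\extn{1}$ again belongs to $\Delta$ (the reverse rotation then follows formally, using that $\extn{1}$ is an equivalence; this is the direction that consumes $\mathbf{(Z2')}$, the cocone counterpart of $\Ztwo$). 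For this I would form the standard triangle on $g = bg_0 \colon y \to z$ and compare: by $\TRfour$ in $\sD$ applied to $g_0$ and $b$, the $\sD$-cone $\tilde w$ of $g$ sits in a triangle $\Sigma x \too \tilde w \too \Sigma s \too \Sigma^2 x$, and $\tilde w$, being the cone of a morphism of $\sZ$, lies in $\extn{\sS} * \sZ$ by $\Ztwo$, so its $\sS\orth$-part $w_g$ lies in $\sZ$. One then has to produce an isomorphism $x\extn{1} \rightlabel{\sim} w_g$ identifying $h$ with the third map of the standard triangle on $g$ and $-f\extn{1}$ with its connecting morphism; this is a diagram chase through the $\TRfour$ diagram in which the comparison map is built from the facts that $\beta_x$ is a left $\sS\orth$-approximation and $\alpha_x, \alpha_y$ are minimal right $\extn{\sS}$-approximations, and is determined — together with the identification of the connecting morphism — by Lemma~\ref{lem:zeromaptoZ}, its dual, and the triangle defining $\extn{1}$.

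The hard part is exactly this last step. Two independent $\extn{\sS}$-corrections are at work — the one turning the $\sD$-cone of $f$ into $z_f$, and the ones built into the mutation triangles defining $\extn{1}$ — and one must carry both of them through the octahedral diagram and verify that the resulting comparison morphism really is an isomorphism, compatibly with all the structure in sight. The uniqueness statements of Section~\ref{sec:mutation} (Lemmas~\ref{lem:zeromaptoZ} and \ref{lem:mutation}) are precisely what keep this bookkeeping under control; once $\TRtwo$ is established, $\TRone$ and $\TRthree$ are routine.
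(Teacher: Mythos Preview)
Your proposal is correct and follows essentially the same route as the paper: the shift is the mutation equivalence $G$ of Lemma~\ref{lem:mutation}, standard triangles are built by passing to the $\sS\orth$-part of the $\sD$-cone, and $\TRtwo$ is checked via the octahedral axiom applied to the composite $g=\beta_f g_1$, after which one identifies the $\sZ$-cone of $g$ with $x\extn{1}$ by recognising a minimal right $\extn{\sS}$-approximation of $\Sigma x$.

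Two small remarks. First, your case distinction for the uniqueness of $e$ is unnecessary: the condition $\sZ \subseteq (\Sigma\sS)\orth$ is part of the definition of an $\sS$-mutation pair (it is the $(\Sigma\sS)\orth$ clause in the description of $\sY=\sZ$), and extension closure then gives $\Hom(\Sigma\extn{\sS},\sZ)=0$ outright, without appeal to Setup~\ref{blanket-setup}. The paper obtains the same map $h$ by instead first producing $\sigma\colon s_f\to s_x$ from the approximation property of $\alpha_x$ and then invoking $\TRthree$; your factorisation-through-$b$ argument is an equivalent and perfectly valid alternative. Second, your parenthetical about the reverse rotation is slightly muddled: either it ``follows formally'' from the forward direction (in which case $\mathbf{(Z2')}$ is not consumed), or it is done by the dual argument using cocones (which is what the paper does, and which does use $\mathbf{(Z2')}$). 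These are cosmetic points; the substance of your plan matches the paper.
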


Before proving the theorem, we define the functor $\extn{1}$ and the standard triangle $x \rightlabel{f} y \too z_f \too x\extn{1}$. We point out that the definition of $\extn{1}$ is the only place in the proof of Theorem~\ref{thm:pretriangulated} that we use the full force of the hypotheses of Setup~\ref{blanket-setup}.
 
\begin{definition}[Shift in $\sZ$] \label{def:shift-in-Z}
We define the shift functor $\extn{1} \coloneqq G \colon \sZ \to \sZ$, where $G$ is defined as in Lemma~\ref{lem:mutation}. The inverse shift functor $\extn{-1} = H$, as defined in Lemma~\ref{lem:mutation}.
\end{definition}

Before we can define the cones of morphisms in $\sZ$, we need the following observation.

\begin{lemma} \label{lem:cone-in-Z}
Let $f \colon x \to y$ be a morphism in $\sZ$ and consider the triangle 
$\trilabels{x}{y}{c_f}{f}{g_1}{h_1}$
in $\sD$. Let $\trilabels{s_f}{c_f}{z_f}{\alpha_f}{\beta_f}{\gamma_f}$ be the corresponding minimal right $\extn{\sS}$-approximation triangle. Then $z_f \in \sZ$.
\end{lemma}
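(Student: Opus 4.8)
The plan is to show $z_f \in \sZ$ by using the characterisation of $\sZ$ as an $\sS$-mutation pair together with condition $\Zone$, exactly as in the analogous step in \cite{IY}. Since $\sS$ is an orthogonal collection with $\extn{\sS}$ functorially finite and $(\sZ,\sZ)$ is an $\sS$-mutation pair, we have in particular $\sZ = {}\orth \sS \orth \cap {}\orth (\Sigma^{-1} \sS) \cap \Sigma^{-1}(\extn{\sS} * \sZ)$. Here the main point to verify will be the membership $z_f \in \Sigma^{-1}(\extn{\sS} * \sZ)$, i.e. that $\Sigma z_f \in \extn{\sS} * \sZ$; the perpendicularity conditions will come more or less for free from Lemma~\ref{lem:basic-props}(4).

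First I would record the perpendicularity. By construction $\alpha_f \colon s_f \to c_f$ is a (minimal) right $\extn{\sS}$-approximation, so by the Wakamatsu Lemma~\ref{lem:basic-props}(4) we get $z_f \in \sS\orth$. For the other two conditions I would use $\Ztwo$: the cone $c_f$ of the map $f$ in $\sZ$ lies in $\extn{\sS} * \sZ$, say $\trilabels{s}{c_f}{z}{}{}{}$ with $s \in \extn{\sS}$, $z \in \sZ$. Since $z \in \sZ \subseteq {}\orth\sS \orth \cap {}\orth(\Sigma^{-1}\sS)$ and $s \in \extn{\sS}$, while $s_f \in \extn{\sS}$ too, comparing the two triangles over $c_f$ and using that $\Hom(\extn{\sS}, \Sigma^i\extn{\sS}) $ behaves well for the relevant shifts, one sees $z_f$ and $z$ differ by a summand in $\extn{\sS}$; more precisely, since both $s_f \to c_f$ and $s \to c_f$ are right $\extn{\sS}$-approximations with $s_f \to c_f$ minimal, Lemma~\ref{lem:basic-props}(2)--(3) give that $s_f$ is a summand of $s$ and hence $z_f$ is a summand of an object of $\{$objects built from $z$ and a complement of $s_f$ in $s\}$; carefully, $z_f$ is a direct summand of $z \oplus (\text{shift of that complement})$, but since that complement lies in $\extn{\sS}$ and $z_f \in \sS\orth$, it follows that $z_f$ is a summand of $z$, hence $z_f \in \sZ$ by $\Zone$ (closure under summands). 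Actually this already finishes the proof, so the cleanest route is: exhibit $z_f$ as a summand of the $\sZ$-part $z$ of the decomposition $c_f \in \extn{\sS} * \sZ$ furnished by $\Ztwo$, then invoke closure under summands.

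Let me restate the key step more carefully, as I expect it to be the main obstacle. From $\Ztwo$ we have a triangle $\trilabels{s}{c_f}{z}{a}{b}{}$ with $s\in\extn{\sS}$, $z\in\sZ$. Since $z \in \sZ \subseteq \sS\orth$, the map $a \colon s \to c_f$ is a right $\extn{\sS}$-approximation of $c_f$ (apply $\Hom(\extn{\sS},-)$ to the triangle and use $\Hom(\extn{\sS},\Sigma^{-1}z)=0$, which holds since $z \in {}\orth(\Sigma^{-1}\sS)$ gives $\Hom(\Sigma\sS, z)=0$ hence $\Hom(\sS, \Sigma^{-1}z)=0$, and $\extn{\sS}$ is generated by $\sS$ under extensions). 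Now $\alpha_f\colon s_f \to c_f$ is a \emph{minimal} right $\extn{\sS}$-approximation, so by Lemma~\ref{lem:basic-props}(2) the approximation $a$ is, up to isomorphism, $\begin{pmat}\alpha_f & 0\end{pmat}\colon s_f \oplus s' \to c_f$ with $s' \in \extn{\sS}$. Completing to triangles and comparing, the cone $z$ of $a$ is isomorphic to $z_f \oplus \Sigma s'$ (the cone of $s_f \oplus s' \to c_f$ is the cone of $s_f \to c_f$ shifted-summed with the cone of $0 \to s'$, i.e. $z_f \oplus \Sigma s'$). Hence $z_f$ is a direct summand of $z \in \sZ$, and by $\Zone$ we conclude $z_f \in \sZ$, as required. $\qed$
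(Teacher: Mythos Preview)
Your final, ``restated'' argument is correct and is essentially identical to the paper's proof: use $\Ztwo$ to write $c_f \in \extn{\sS}*\sZ$, observe that $z \in \sZ \subseteq \sS\orth$ forces $s \to c_f$ to be a right $\extn{\sS}$-approximation, compare with the minimal one to get $z_f$ as a summand of $z$, and conclude by $\Zone$. One minor quibble: your parenthetical justification invokes $\Hom(\extn{\sS},\Sigma^{-1}z)=0$ and derives it from $z \in {}\orth(\Sigma^{-1}\sS)$, but $z \in {}\orth(\Sigma^{-1}\sS)$ means $\Hom(z,\Sigma^{-1}\sS)=0$, not $\Hom(\Sigma\sS,z)=0$; in any case that vanishing is irrelevant, since surjectivity of $\Hom(\extn{\sS},a)$ only needs $\Hom(\extn{\sS},z)=0$, which follows directly from $z \in \sS\orth$.
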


\begin{proof}
By $\Ztwo$ there is a triangle $\trilabel{s}{c_f}{z}{\alpha}$ in $\sD$ with $s \in \extn{S}$ and $z \in \sZ$. Since $z \in \extn{\sS}\orth$, we have that $\alpha$ is a right $\extn{S}$-approximation of $c_f$. Hence, by Lemma~\ref{lem:basic-props}, $s_f$ is a summand of $s$ and $z_f$ is a summand of $z$ and lies in $\sZ$ by $\Zone$.
\end{proof}

Let $f \colon x \to y$ be a morphism in $\sZ$ and consider the triangle 
$\trilabels{x}{y}{c_f}{f}{g_1}{h_1}$ in $\sD$ together with the minimal right $\extn{\sS}$-approximation triangles of $c_f$ and $\Sigma x$ in the diagram below. 
\begin{equation} \label{eq:cone}
\xymatrix{& & s_f \ar[d]^{\alpha_f} \ar[r]^{\sigma} & s_x \ar[d]^{\alpha_x} \\
x \ar[r]^f & y \ar[r]^{g_1} \ar[dr]_g & c_f \ar[r]^{h_1} \ar[d]^{\beta_f} & \Sigma x \ar[d]^{\beta_x} \\
& & z_f \ar[r]^h \ar[d]^{\gamma_f} & x \extn{1} \ar[d]^{\gamma_x} \\
& & \Sigma s_f \ar[r]_{\Sigma \sigma} & \Sigma s_x,}
\end{equation}
The morphism $\sigma$ exists since $\alpha_x$ is a right $\extn{S}$-approximation of $\Sigma x$; it is unique by the fact that $x\extn{1} \in \sZ$ and $(\sZ,\sZ)$ is an $\sS$-mutation pair. 
The existence of $h$ follows from $\TRthree$ in $\sD$; it is unique making the squares commute by the same argument as above and Lemma~\ref{lem:cone-in-Z}. The object $z_f$ will be called the \emph{cone of $f$ in $\sZ$}. There is a natural dual construction of the \emph{cocone of $f$ in $\sZ$}.

\begin{definition}[Triangles in $\sZ$] \label{def:cones-in-Z}
Let $f \colon x \to y$ be a morphism in $\sZ$. The diagrams $\trilabelsZ{x}{y}{z_f}{f}{g}{h}$ will be called the \emph{standard triangles} of $\sZ$. We define $\Delta$, the set of diagrams of the form $\triZ{x}{y}{z}$ with $x, y, z \in \sZ$ isomorphic to a standard triangle, to be a set of triangles in $\sZ$.
\end{definition}

Before proving Theorem~\ref{thm:pretriangulated} we record an observation that will be useful later.

\begin{lemma} \label{lem:cone-orth}
Let $f \colon x \to y$ be a morphism in $\sZ$. Then $c_f \in (\Sigma \sS)\orth$.
\end{lemma}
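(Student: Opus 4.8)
We want to show that for a morphism $f \colon x \to y$ in $\sZ$, the cone $c_f$ in $\sD$ of $f$ satisfies $c_f \in (\Sigma\sS)\orth$, i.e. $\Hom(\Sigma s, c_f) = 0$ for every $s \in \sS$. The natural approach is to apply the cohomological functor $\Hom(\Sigma s, -)$ to the defining triangle $\trilabels{x}{y}{c_f}{f}{g_1}{h_1}$ in $\sD$, obtaining the long exact sequence
\[
\Hom(\Sigma s, y) \too \Hom(\Sigma s, c_f) \too \Hom(\Sigma s, \Sigma x) \rightlabel{\Hom(\Sigma s, \Sigma f)} \Hom(\Sigma s, \Sigma y).
\]
Since $x, y \in \sZ$ and $(\sZ,\sZ)$ is an $\sS$-mutation pair, we have $\sZ \subseteq {}\orth \sS \orth \cap (\Sigma\sS)\orth$, so in particular $\Hom(\Sigma s, y) = 0$. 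Thus $\Hom(\Sigma s, c_f)$ injects into $\Hom(\Sigma s, \Sigma x) = \Hom(s, x)$, and it suffices to show that the connecting map $\Hom(s,x) \to \Hom(s,y)$ induced by $f$ is injective — equivalently, that the first map $\Hom(\Sigma s, c_f) \to \Hom(\Sigma s, \Sigma x)$ is zero.

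First I would observe that $\Hom(s, x) = 0$ for all $s \in \sS$ whenever $x \in \sZ$: indeed $\sZ \subseteq {}\orth\sS\orth$ means $\sS \subseteq \sZ\orth$... wait, that gives $\Hom(z, s) = 0$, not $\Hom(s,z) = 0$. The correct observation is that the mutation-pair condition gives $\sZ \subseteq {}\orth(\Sigma^{-1}\sS)$ as well (after shifting, $\Hom(\Sigma s, \Sigma x) = \Hom(s,x)$, and $\sZ \subseteq {}\orth(\Sigma^{-1}\sS)$ is part of the defining intersection for an $\sS$-mutation pair). So in fact $\Hom(s, x) = \Hom(\Sigma s, \Sigma x) = 0$ directly, since $\sZ = {}\orth\sS\orth \cap {}\orth(\Sigma^{-1}\sS) \cap \Sigma^{-1}(\extn{\sS}*\sZ)$ forces $x \in {}\orth(\Sigma^{-1}\sS)$, i.e. $\Hom(x, \Sigma^{-1}\sS) = 0$... hmm, that is $\Hom(x, \Sigma^{-1}s) = \Hom(\Sigma x, s)$, the wrong variance again. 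Let me instead use the reformulation in the introduction: $\sZ = \{d \mid \Hom(\Sigma^i\sS, d) = 0 = \Hom(d, \Sigma^{-i}\sS),\ 0 \le i \le w-1\}$ with $w=1$, which directly gives $\Hom(\sS, x) = 0$, hence $\Hom(\Sigma s, \Sigma x) = \Hom(s,x) = 0$. Then the long exact sequence above collapses to $\Hom(\Sigma s, c_f) = 0$ immediately, with no need to analyze the connecting map at all.

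So the clean argument is: apply $\Hom(\Sigma s, -)$ to $\trilabels{x}{y}{c_f}{f}{g_1}{h_1}$; the term $\Hom(\Sigma s, y)$ vanishes because $y \in \sZ \subseteq (\Sigma\sS)\orth$; the term $\Hom(\Sigma s, \Sigma x) \cong \Hom(s,x)$ vanishes because $x \in \sZ \subseteq {}\orth\sS\orth$ forces $\Hom(s,x)=0$ (equivalently by the membership description of $\sZ$); hence $\Hom(\Sigma s, c_f) = 0$ by exactness. The only subtlety — and the one place to be careful — is extracting the two vanishing statements from the precise mutation-pair definition rather than the introduction's description of $\sZ$; both $\Hom(\Sigma\sS, \sZ) = 0$ and $\Hom(\sS, \sZ) = 0$ are consequences of $\sZ \subseteq {}\orth\sS\orth \cap {}\orth(\Sigma^{-1}\sS)$ once one tracks variances correctly, but this bookkeeping is the main (very minor) obstacle; there is no real difficulty here.
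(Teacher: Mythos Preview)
Your clean argument is correct and matches the paper's proof exactly: apply $\Hom_\sD(\Sigma\sS,-)$ to the cone triangle, then $\Hom(\Sigma s, y) = 0$ since $\sZ \subseteq (\Sigma\sS)\orth$ and $\Hom(s,x) = 0$ since $\sZ \subseteq \sS\orth$. Your only slip is in the final sentence, where you claim both vanishings follow from $\sZ \subseteq {}\orth\sS\orth \cap {}\orth(\Sigma^{-1}\sS)$; in fact $\Hom(\Sigma\sS,\sZ)=0$ needs the $\sY$-side condition $\sZ \subseteq (\Sigma\sS)\orth$ of the mutation-pair definition --- which you already invoked correctly a few lines earlier.
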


\begin{proof}
Simply apply the functor $\Hom_\sD(\Sigma \sS,-)$ to the triangle $\trilabel{x}{y}{c_f}{f}$ in $\sD$ and use the fact that $x,y \in \sZ$.
\end{proof}

\begin{proof}[Proof of Theorem~\ref{thm:pretriangulated}]
To show that $\sZ$ is a pretriangulated category with the given pretriangulated structure we must verify axioms $\TRone$, $\TRtwo$ and $\TRthree$. The verification of $\TRone$ is immediate.

\medskip
\noindent
For $\TRtwo$ it is sufficient to show that for a standard triangle $\trilabelsZ{x}{y}{z_f}{f}{g}{h}$ the diagram $\trilabelsZ{y}{z_f}{x\extn{1}}{g}{h}{-f\extn{1}}$ is isomorphic to a standard triangle $\trilabelsZ{y}{z_f}{z_g}{g}{a}{b}$. This will establish $\TRtwo$ in one direction; the other direction is analogous. 

Recall the diagram \eqref{eq:cone} defining the cone of $f$ in $\sZ$ and consider the octahedral diagram coming from the composition $g= \beta_f g_1$.
\begin{equation}\label{eq:TR2-1}
\xymatrix{
                         & s_f \ar[d]_{\alpha_f} \ar@{=}[r]   & s_f \ar[d]^{h_1 \alpha_f}                 & \\
y \ar@{=}[d] \ar[r]^{g_1} & c_f \ar[r]^{h_1} \ar[d]_{\beta_f}   & \Sigma x \ar[r]^{-\Sigma f} \ar[d]^{\beta} & \Sigma y \ar@{=}[d] \\
y \ar[r]_g               & z_f \ar[r]_{a_1} \ar[d]_{\gamma_f}  & c_g \ar[r]_{b_1} \ar[d]^{\gamma}          & \Sigma y \\
                         & \Sigma s_f \ar@{=}[r]           & \Sigma s_f                             &
}
\end{equation}
We now define the cone of $g$ in $\sZ$.
\begin{equation}\label{eq:TR2-2}
\xymatrix{
            &                           & s_g \ar[r]^{\tau} \ar[d]_{\alpha_g}  & s_y \ar[d]^{\alpha_y} \\
y \ar[r]^g  & z_f \ar[r]^{a_1} \ar[dr]_a  & c_g \ar[r]^{b_1} \ar[d]^{\beta_g}    & \Sigma y \ar[d]^{\beta_y} \\
            &                           & z_g \ar[r]^b \ar[d]^{\gamma_g}      & y \extn{1} \ar[d]^{\gamma_y} \\
            &                           & \Sigma s_g \ar[r]_{\Sigma \tau}     & \Sigma s_y
}
\end{equation}
Applying $\TRthree$ in $\sD$ to the second vertical triangle in diagram \eqref{eq:TR2-1} and the defining triangle of $x \extn{1}$, we get the following commutative diagram.
\begin{equation}\label{eq:TR2-3}
\xymatrix@!R=8px{
s_\alpha \ar[r]^{h_1 \alpha_f} \ar[d]_{\sigma} & \Sigma x \ar[r]^{\beta} \ar@{=}[d] & c_g \ar[r]^{\gamma} \ar[d]_{k^\prime} & \Sigma s_f \ar[d]_{\Sigma \sigma} \\
s_x \ar[r]_{\alpha_x}                      & \Sigma x \ar[r]_{\beta_x}           & x \extn{1} \ar[r]_{\gamma_x}       & \Sigma s_x
}
\end{equation}
Recall that $\sigma$ is the unique map such that $\alpha_x \sigma = h_1 \alpha_f$. By the same argument, the map $k' \colon c_g \to x \extn{1}$ is the unique completion to a morphism of triangles given by $\TRthree$ in $\sD$.

Note that $k^\prime \alpha_g = 0$ since $\sZ \subset \sS\orth$. Therefore, there is a map $k \colon z_g \to x \extn{1}$ such that $k^\prime = k \beta_g$. We claim that the following diagram commutes.
\begin{equation}\label{eq:iso}
\xymatrix@!R=8px{
y \ar[r]^g \ar@{=}[d] & z_f \ar[r]^a \ar@{=}[d] & z_g \ar[r]^b \ar[d]_{k}       & y \extn{1} \ar@{=}[d] \\
y \ar[r]_g            & z_f \ar[r]_h            & x \extn{1} \ar[r]_{-f \extn{1}} & y \extn{1}
}
\end{equation}
It is clear that the left-hand square commutes. For the central square, we have:
\[
h \beta_f \stackrel{\eqref{eq:cone}}{=} \beta_x h_1
          \stackrel{\eqref{eq:TR2-3}}{=} k' \beta h_1
          \stackrel{\eqref{eq:TR2-1}}{=} k' a_1 \beta_f
          =                              k \beta_g a_1 \beta_f
          \stackrel{\eqref{eq:TR2-2}}{=} k a \beta_f,
\]
so that $(h - ka) \beta_f = 0$. Therefore, by Lemma~\ref{lem:zeromaptoZ}, $h = ka$, showing that the central square of diagram \eqref{eq:iso} commutes.

For the right-hand square, we have,
\[
b \beta_g \beta \stackrel{\eqref{eq:TR2-2}}{=} \beta_y b_1 \beta
                \stackrel{\eqref{eq:TR2-1}}{=} \beta_y (-\Sigma f) 
                =                              (-f \extn{1}) \beta_x
                \stackrel{\eqref{eq:TR2-3}}{=} (-f \extn{1}) k' \beta
                =                              (-f \extn{1}) k \beta_g \beta,
\]
where the middle equality is by the definition of $f \extn{1}$.
Hence, $(b + (f \extn{1})k) \beta_g \beta = 0$. 
Now two applications of Lemma~\ref{lem:zeromaptoZ} shows that $b = -f \extn{1} k$, giving the commutativity of the right-hand square of diagram \eqref{eq:iso}.

Finally, if $k$ is an isomorphism, we have the required isomorphism of diagrams. To show this, consider the composition $\beta_g \beta \colon \Sigma x \to z_g$. Applying the Octahedral Axiom in $\sD$ to this composition shows that the cone $c_{\beta_g\beta} \simeq \Sigma s$ for some $s \in \extn{\sS}$, giving rise to the triangle,
\[
\trilabels{s}{\Sigma x}{z_g}{\tilde{\alpha}}{\beta_g\beta}{\tilde{\gamma}},
\]
in $\sD$. We claim that $\tilde{\alpha} \colon s \to \Sigma x$ is a minimal right $\extn{\sS}$-approximation. Since $z_g \in \sZ \subset \sS\orth$, $\tilde{\alpha}$ is clearly a right $\extn{\sS}$-approximation. Suppose that $\tilde{\alpha}$ is not right minimal. Then $s \simeq s_x \oplus s^\prime$, for some $s^\prime \in \extn{\sS}$, and $z_g \simeq x\extn{1} \oplus \Sigma s^\prime$. Therefore, $\Hom_\sD(\Sigma s^\prime, z_g) \neq 0$, contradicting the fact that $(\sZ,\sZ)$ is an $\sS$-mutation pair. 
Hence, we have a commutative diagram, as follows,
\[
\xymatrix@!R=10pt{
s \ar[r]^{\tilde{\alpha}} \ar[d]_{\simeq} & \Sigma x \ar[r]^{\beta_g \beta} \ar@{=}[d] & z_g \ar[r]^{\tilde{\gamma}} \ar[d]_{\simeq}^{\tilde{k}} & \Sigma s \ar[d]_{\simeq} \\
s_x \ar[r]_{\alpha_x}                  & \Sigma x \ar[r]_{\beta_x}                 & x \extn{1} \ar[r]_{\gamma_x}                      & \Sigma s_x},
\]
where by the usual argument $\tilde{k}$ is unique making the middle square commute. Now $\beta_x = k^\prime \beta = k \beta_g \beta$. Hence $k = \tilde{k}$ is an isomorphism, and so 
$\trilabelsZ{y}{z_f}{x \extn{1}}{g}{h}{-f \extn{1}} \in \Delta$. 

\medskip
\noindent
We now turn to the verification of $\TRthree$ in $\sZ$. It is enough to show for two standard triangles, indicated below, in which the left-hand square commutes, then there exists a third arrow $c$ making the whole diagram commute.
\begin{equation} \label{eq:TR3}
\xymatrix@!R=10pt{
x \ar[r]^f \ar[d]^a       & y \ar[r]^g \ar[d]^b      & z_f \ar[r]^h \ar@{-->}[d]^c  & x \extn{1} \ar[d]^{f \extn{1}} \\
x^\prime \ar[r]_-{f^\prime} & y^\prime \ar[r]_-{g^\prime} & z_{f^\prime} \ar[r]_-{h^\prime}  &  x^\prime \extn{1}
}
\end{equation}
We require the following diagrams; the notation is set up as in Definitions \ref{def:shift-in-Z} and \ref{def:cones-in-Z}. 
\[
(A) \quad
\xymatrix@!R=10pt{
x \ar[r]^-{f} \ar[d]^-{a} & y \ar[r]^-{g_1} \ar[d]^-{b} & c_f \ar[r]^-{h_1} \ar@{-->}[d]^-{c_1} & \Sigma x \ar[d]^-{\Sigma a} \\
x^\prime \ar[r]_-{f^\prime}       & y^\prime \ar[r]_-{g^\prime_1} & c_{f^\prime_1} \ar[r]_-{h^\prime_1}       & \Sigma x^\prime
}
\qquad (B) \quad
\xymatrix@!R=10pt{
s_f \ar[r]^-{\alpha_f} \ar[d]    & c_f \ar[r]^-{\beta_f} \ar[d]^-{c_1} & z_f \ar[r]^-{\gamma} \ar@{-->}[d]^-{c} & \Sigma s_f \ar[d] \\
s_{f^\prime} \ar[r]_-{\alpha_{f'}}  & c_{f^\prime} \ar[r]_-{\beta_{f^\prime}}  & z_{f^\prime} \ar[r]_-{\gamma_{f'}}         & \Sigma s_{f^\prime}.
}
\]
Diagram $(A)$ is simply $\TRthree$ in $\sD$ applied to the cone of $f$ and $f'$ triangles in $\sD$. Diagram $(B)$ takes the morphism $c_1$ obtained in diagram $(A)$ and is constructed in an analogous manner to the vertical part of diagram \eqref{eq:cone}.
Finally, the following diagram follows by the definition of $a \extn{1}$ in Definition \ref{def:shift-in-Z}.
\begin{equation}\label{eq:C}
\xymatrix@!R=10pt{
s_x \ar[r]^-{\alpha_x} \ar[d]_-{\sigma} & \Sigma x \ar[r]^-{\beta_x} \ar[d]^-{\Sigma a} & x \extn{1} \ar[r]^-{\gamma_x} \ar[d]^-{a \extn{1}} & \Sigma s_x \ar[d] \\
s_{x^\prime} \ar[r]_-{\alpha_{x^\prime}}      & \Sigma x^\prime \ar[r]_-{\beta_{x'}} & x^\prime \extn{1} \ar[r]_-{\gamma_{x'}}                         & \Sigma s_{x^\prime}.
}
\tag{$C$}
\end{equation}

We first show that the central square of diagram \eqref{eq:TR3} commutes. We have, $g = \beta_f g_1$ and $h \beta_f = \beta_x h_1$ from diagram \eqref{eq:cone}, and $g' = \beta_{f'} g'_1$ and $h' \beta_{f'} = \beta_{x'} h'_1$ from the corresponding diagram from $f' \colon x' \to y'$. It follows that
\[
c g =                           c \beta_f g_1
    \stackrel{(B)}{=} \beta_{f'} c_1 g_1
    \stackrel{(A)}{=} \beta_{f'} g'_1 b
    =                 g' b. 
\]
For the right-hand square of diagram \eqref{eq:TR3}, we have
\[
h' c \beta_f \stackrel{(B)}{=}           h' \beta_{f'} c_1
             =                           \beta_{x'} h'_1 c_1
             \stackrel{(A)}{=}           \beta_{x'} (\Sigma a) h_1
             \stackrel{\eqref{eq:C}}{=}  (a \extn{1}) \beta_x h_1
             =                           (a \extn{1}) h \beta_f,
\]
from which it follows that $(h'c - a\extn{1}h)\beta_f = 0$. 
Applying Lemma~\ref{lem:zeromaptoZ}, we obtain that $h' c = a\extn{1} h$ and diagram \eqref{eq:TR3} commutes.

We therefore conclude that $\sZ$ with the pretriangulated structure given by $\extn{1} \colon \sZ \to \sZ$ and $\Delta$ is a pretriangulated category, completing the proof of Theorem~\ref{thm:pretriangulated}. 
\end{proof}

%===================================================================================
% SECTION
\section{The Octahedral Axiom} \label{sec:triangulated}
%==================================================================================

In this section we show that the Octahedral Axiom also holds for the pretriangulated structure defined in Theorem~\ref{thm:pretriangulated}.

\begin{theorem} \label{thm:triangulated}
Assume the hypotheses of Setup~\ref{blanket-setup} and Theorem~\ref{thm:pretriangulated} hold.
Then the pretriangulated structure of Theorem~\ref{thm:pretriangulated} on $\sZ$ satisfies the Octahedral Axiom.
\end{theorem}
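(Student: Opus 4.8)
The plan is to derive $\TRfour$ in $\sZ$ from $\TRfour$ in $\sD$, transporting the octahedral diagram through the minimal right $\extn{\sS}$-approximation construction of Definition~\ref{def:cones-in-Z}. As in the verifications of $\TRtwo$ and $\TRthree$, it suffices to begin with two standard triangles $\trilabelsZ{x}{y}{z_f}{f}{g}{h}$ and $\trilabelsZ{y}{y'}{z_u}{u}{\bar g}{\bar h}$ together with all the diagram~\eqref{eq:cone} data in $\sD$: the $\sD$-cone triangles $\tri{x}{y}{c_f}$, $\tri{y}{y'}{c_u}$ and $\tri{x}{y'}{c_{uf}}$ of $f$, $u$ and $uf$; their minimal right $\extn{\sS}$-approximation triangles $\trilabels{s_f}{c_f}{z_f}{\alpha_f}{\beta_f}{\gamma_f}$, and similarly for $c_u$ and $c_{uf}$; and the minimal right $\extn{\sS}$-approximation triangles $\trilabels{s_x}{\Sigma x}{x\extn{1}}{\alpha_x}{\beta_x}{\gamma_x}$ and $\trilabels{s_y}{\Sigma y}{y\extn{1}}{\alpha_y}{\beta_y}{\gamma_y}$ defining the shift. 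Applying $\TRfour$ in $\sD$ to the composition $uf$ produces a $\sD$-triangle $\trilabels{c_f}{c_{uf}}{c_u}{a}{b}{\omega}$ together with the relations $ag_1 = g_1^{uf}u$, $h_1^{uf}a = h_1$, $bg_1^{uf} = g_1^u$ and $(\Sigma f)h_1^{uf} = h_1^u b$. By construction $z_{uf}$ is the cone of $uf$ in $\sZ$, with standard triangle $\trilabelsZ{x}{y'}{z_{uf}}{uf}{g''}{h''}$; this must be the third row of the octahedral diagram in $\sZ$.

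The vertical maps come next. Since $s_f \in \extn{\sS}$ and $z_{uf} \in \sZ \subseteq \sS\orth$, the morphism $\beta_{uf}a \colon c_f \to z_{uf}$ kills $\alpha_f$, hence factors uniquely (Lemma~\ref{lem:zeromaptoZ}) as $p\beta_f$ for some $p \colon z_f \to z_{uf}$; likewise $\beta_u b$ factors as $q\beta_{uf}$ for a unique $q \colon z_{uf} \to z_u$. The relations required by $\TRfour$ in $\sZ$ then follow from their $\sD$-counterparts and Lemma~\ref{lem:zeromaptoZ}: $pg = g''u$ and $qg'' = \bar g$ are immediate from the diagram~\eqref{eq:cone} data; $h''p\beta_f = h''\beta_{uf}a = \beta_x h_1^{uf}a = \beta_x h_1 = h\beta_f$ gives $h''p = h$, using that the same $\beta_x \colon \Sigma x \to x\extn{1}$ occurs in the diagrams for $f$ and for $uf$; and $(f\extn{1})h''\beta_{uf} = (f\extn{1})\beta_x h_1^{uf} = \beta_y(\Sigma f)h_1^{uf} = \beta_y h_1^u b = \bar h\beta_u b = \bar hq\beta_{uf}$ gives $(f\extn{1})h'' = \bar hq$, where $(f\extn{1})\beta_x = \beta_y(\Sigma f)$ is the defining relation of the shift (Lemma~\ref{lem:mutation}). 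Finally $qp\beta_f = \beta_u(ba) = 0$ forces $qp = 0$.

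The heart of the proof is to show that the diagram $\trilabelsZ{z_f}{z_{uf}}{z_u}{p}{q}{r}$, for a suitable connecting map $r$, is isomorphic to the standard triangle of $p$ in $\sZ$; equivalently, that $z_u$ is the cone of $p$. I would first build the standard triangle $\trilabelsZ{z_f}{z_{uf}}{z_p}{p}{g_p}{h_p}$ from diagram~\eqref{eq:cone} applied to $p$, via the $\sD$-cone $c_p$ of $p$ and its minimal right $\extn{\sS}$-approximation $\trilabels{s_p}{c_p}{z_p}{\alpha_p}{\beta_p}{\gamma_p}$. Since $\sZ$ is pretriangulated (Theorem~\ref{thm:pretriangulated}) and $qp = 0$, we obtain $q = kg_p$ for some $k \colon z_p \to z_u$. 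To identify $z_p$ with $z_u$ I would follow the template of the $\TRtwo$ argument above and analyse $c_p$ inside $\sD$: applying $\TRfour$ in $\sD$ to the two factorisations of the single morphism $\beta_{uf}a = p\beta_f \colon c_f \to z_{uf}$ — first through $c_{uf}$, then through $z_f$ — produces a common cone $U$ sitting in $\sD$-triangles $\trilabels{c_u}{U}{\Sigma s_{uf}}{}{}{}$ and $\trilabels{\Sigma s_f}{U}{c_p}{}{}{}$. Composing $c_u \to U \to c_p$ with $\beta_p$ and cancelling $s_u$ (again using $z_p \in \sS\orth$) produces a map $z_u \to z_p$; a diagram chase — organised exactly as in the $\TRtwo$ proof around exhibiting a $\sD$-triangle $\trilabels{s}{c_p}{z_u}{\tilde\alpha}{}{}$ with $s \in \extn{\sS}$ and showing $\tilde\alpha$ is a minimal right $\extn{\sS}$-approximation (which uses $\sZ \subseteq (\Sigma\sS)\orth$, see Lemma~\ref{lem:cone-orth}, to rule out spurious $\extn{\sS}$-summands) — identifies this with the defining approximation triangle of $c_p$. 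Hence $z_p \simeq z_u$, $k$ is the comparison isomorphism, and setting $r = h_p k^{-1}$ completes the octahedral diagram of $\sZ$; the remaining commutativities are routine via Lemma~\ref{lem:zeromaptoZ}.

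I expect this last identification to be the main obstacle. Unlike in \cite{IY}, a triangle of $\sZ$ is not a triangle of $\sD$, so the octahedral diagram in $\sD$ does not directly supply the third column of the octahedral diagram in $\sZ$; one must genuinely reconstruct the $\sZ$-cone of $p$ and match it with $z_u$. Carrying the objects $\Sigma s_f$ and $\Sigma s_{uf}$ — which lie in $\Sigma\extn{\sS}$ but not in $\extn{\sS}$ — through the several octahedra in $\sD$ while keeping every square commutative, and keeping track of which approximations are minimal, is where the argument becomes long; everything else reduces to repeated use of Lemma~\ref{lem:zeromaptoZ} and the Wakamatsu-type Lemma~\ref{lem:basic-props}.
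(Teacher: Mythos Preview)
Your overall architecture matches the paper's: build the maps between the $\sZ$-cones from the $\sD$-octahedron of $uf$, verify the commutativities via Lemma~\ref{lem:zeromaptoZ}, then compare $z_u$ with the standard $\sZ$-cone $z_p$ of $p$. Your intermediate object $U$ is what one obtains by unpacking the $3\times 3$ diagram of \cite[Proposition~1.1.11]{BBD} that the paper invokes; the composite $c_u \to U \to c_p$ is (up to the usual choices) the paper's map $\kappa \colon c_a \to c_r$, and its cocone in $\sD$ is the object the paper calls $c_\sigma$, the cone of the induced map $\sigma \colon s_f \to s_{uf}$ between the approximating objects.

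The gap is in the sentence ``a diagram chase --- organised exactly as in the $\TRtwo$ proof around exhibiting a $\sD$-triangle $\trilabels{s}{c_p}{z_u}{\tilde\alpha}{}{}$ with $s \in \extn{\sS}$''. First a minor point: the direction is off. The comparison triangle one actually produces is $\trilabels{s}{c_u}{z_p}{}{}{}$, to be matched against the defining triangle $\trilabels{s_u}{c_u}{z_u}{\alpha_u}{\beta_u}{}$; there is no map $c_p \to z_u$ in sight. More importantly, the analogy with $\TRtwo$ breaks down exactly here. In the $\TRtwo$ argument the relevant cocone was an extension of two objects of $\extn{\sS}$, so membership in $\extn{\sS}$ was automatic. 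Here the cocone of $c_u \to c_p$ (the paper's $c_\sigma$) sits a priori only in $\extn{\sS} * \Sigma\extn{\sS}$, as the cone of a map $s_f \to s_{uf}$ between objects of $\extn{\sS}$, and one must \emph{prove} it lies in $\extn{\sS}$. This is not a diagram chase: it is the paper's Lemma~\ref{lem:cone-of-sigma}, whose proof proceeds by induction on the $\sS$-length of $s_f$ via an explicit $\sS$-composition series (Definition~\ref{def:length} and Lemma~\ref{lem:Dugas}), and at the inductive step crucially invokes the full hypothesis of Setup~\ref{blanket-setup} --- either $\Hom(\Sigma\sS,\sS)=0$, or the $\SSS_{-1}$-subcategory condition together with the auxiliary Lemma~\ref{lem:serre-factor} on universal maps into the Serre image. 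The paper itself remarks that under the extra hypothesis $\sZ \subset (\Sigma^2\sS)\orth$ one can bypass this lemma, but that hypothesis fails for $w=1$, so your ``as in $\TRtwo$'' strategy cannot cover the general statement. You have correctly located the $\Sigma\extn{\sS}$ terms as the obstruction, but ``carrying them through octahedra while tracking minimality'' does not by itself force them to vanish; the composition-series induction is the missing idea.
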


\begin{proof}
We need to verify that the Octahedral Axiom holds for the triangulated structure defined on $\sZ$ in Section~\ref{sec:pretriangulated}.
Let $\trilabelsZ{u}{v}{z_f}{f}{g}{h}$ and $\trilabelsZ{v}{w}{z_a}{a}{b}{c}$ be two standard triangles in $\sZ$.
We claim there is a commutative diagram of the following form in which each row and column is isomorphic to a standard triangle in $\sZ$ and $f\extn{1}q = cs$.
\begin{equation} \label{eq:octahedral}
\xymatrix{
  & z_a \extn{-1}  \ar@
  {=}[r] \ar[d]_{-c \extn{-1}} & z_a \extn{-1} \ar[d]^{-t \extn{-1}}   &                      \\
u \ar[r]^f \ar@{=}[d] & v \ar[r]^{g} \ar[d]_a                         & z_f \ar[r]^{h} \ar[d]^{r}           &  u\extn{1} \ar@{=}[d] \\
u \ar[r]_{af}          & w \ar[r]_{p} \ar[d]_{b}                       & z_{af} \ar[r]_{q} \ar[d]^{s}         & u\extn{1} \\
                      & z_a \ar@{=}[r]                               & z_a                                & 
}
\end{equation}
We first observe that the two rows are standard triangles in $\sZ$ by construction and the left-hand column is a standard triangle by $\TRtwo$ in $\sZ$. 
We break the rest of the proof up into three steps. Firstly, we define the maps $r$ and $s$ occurring in diagram \eqref{eq:octahedral}. Secondly, we show that diagram \eqref{eq:octahedral} commutes and $f\extn{1}q = cs$. Finally, in the most involved step, we show that the sequence $\trilabelsZ{z_f}{z_{af}}{z_a}{r}{s}{t}$ is isomorphic to a standard triangle in $\sZ$.

\Step{1} {\it The construction of diagram \eqref{eq:octahedral}, in particular, the maps $r$ and $s$.}

\medskip
\noindent
Let $\trilabels{u}{v}{c_f}{f}{g_1}{h_1}$ and $\trilabels{v}{w}{c_a}{a}{b_1}{c_1}$ be triangles in $\sD$ defined by $f$ and $a$. By the Octahedral Axiom in $\sD$, we have the following commutative diagram,
\begin{equation} \label{eq:octahedron-in-D}
\xymatrix{
                      & \Sigma^{-1} c_a \ar@{=}[r] \ar[d]_{-\Sigma^{-1} c_1} & \Sigma^{-1} c_a \ar[d]^{-\Sigma^{-1} t_1}  & \\
u \ar[r]^f \ar@{=}[d] & v \ar[r]^{g_1} \ar[d]_a                          & c_f \ar[r]^{h_1} \ar[d]^{r_1}           & \Sigma u \ar@{=}[d] \\
u \ar[r]_{af}         & w \ar[r]_{p_1} \ar[d]_{b_1}                        & c_{af} \ar[r]_{q_1} \ar[d]^{s_1}        & \Sigma u \\
                     & c_a \ar@{=}[r]                                   & c_a                                  &
}
\end{equation}
such that $(\Sigma f)q_1 = c_1 s_1$.
Considering the approximation triangles defining $z_f$, $z_{af}$ and $z_a$, we obtain the following commutative diagram.
\begin{equation} \label{eq:define-r-s}
\xymatrix@!R=8px{
s_f \ar[r]^-{\alpha_f} \ar@{-->}[d]_-{\sigma}   & c_f \ar[r]^-{\beta_f} \ar[d]_-{r_1}     & z_f \ar[r]^-{\gamma_f} \ar@{-->}[d]^-{r}      & \Sigma s_f \ar@{-->}[d]^-{\Sigma \sigma} \\
s_{af} \ar[r]_-{\alpha_{af}} \ar@{-->}[d]_-{\tau} & c_{af} \ar[r]^-{\beta_{af}} \ar[d]_-{s_1} & z_{af} \ar[r]^-{\gamma_{af}} \ar@{-->}[d]^-{s} & \Sigma s_{af} \ar@{-->}[d] \\
s_a \ar[r]_-{\alpha_a}                          & c_a \ar[r]_-{\beta_a}                    & z_a \ar[r]_-{\gamma_a}                       & \Sigma s_a
}
\end{equation}
Since $\alpha_{af}$ and $\alpha_a$ are right $\extn{\sS}$-approximations, the vertical morphisms $\sigma$ and $\tau$ exist making the left-hand squares commute. Moreover, by the dual of Lemma~\ref{lem:zeromaptoZ}, they are unique making those squares commute. Therefore, by $\TRthree$ in $\sD$, the vertical morphisms $r$ and $s$ exist; applying Lemma~\ref{lem:zeromaptoZ} shows that they are unique making the central squares commute.
We have now constructed diagram \eqref{eq:octahedral}.

\Step{2} {\it Diagram \eqref{eq:octahedral} commutes and $f\extn{1} q = cs$.}

\medskip
\noindent
Clearly, the topmost and leftmost squares of diagram \eqref{eq:octahedral} commute. Recall from Definition~\ref{def:cones-in-Z} that we have,
\begin{equation} \label{eq:commutes}
g= \beta_f g_1, \quad
h \beta_f = \beta_{u} h_1, \quad
b= \beta_a b_1, \quad 
c \beta_a = \beta_{v} c_1, \quad
p = \beta_{af} p_1, \quad  
q \beta_{af} = \beta_{u} q_1.
\end{equation}
It is now clear that the middle square commutes:
\[
pa \stackrel{\eqref{eq:commutes}}{=}        \beta_{af} p_1 a
   \stackrel{\eqref{eq:octahedron-in-D}}{=} \beta_{af} r_1 g_1
   \stackrel{\eqref{eq:define-r-s}}{=}      r \beta_f g_1 
   \stackrel{\eqref{eq:commutes}}{=}        rg.
\]
Similarly, for the bottommost square we have:
\[
sp \stackrel{\eqref{eq:commutes}}{=}        s \beta_{af} p_1 
   \stackrel{\eqref{eq:define-r-s}}{=}      \beta_a s_1 p_1 
   \stackrel{\eqref{eq:octahedron-in-D}}{=} \beta_a b_1 
   \stackrel{\eqref{eq:commutes}}{=} b.
\]
Finally, for the rightmost square, we get a similar chain of equalities:
\[
q r \beta_f \stackrel{\eqref{eq:define-r-s}}{=}      q \beta_{af} r_1 
            \stackrel{\eqref{eq:commutes}}{=}        \beta_{u} q_1 r_1 
            \stackrel{\eqref{eq:octahedron-in-D}}{=} \beta_{u} h_1 
            \stackrel{\eqref{eq:commutes}}{=}        h \beta_f.
\]
Thus, $(qr-h) \beta_f = 0$, and the rightmost square commutes by Lemma \ref{lem:zeromaptoZ}. 
Finally, we have
\[
c s \beta_{af} \stackrel{\eqref{eq:define-r-s}}{=}       c \beta_a s_1
              \stackrel{\eqref{eq:commutes}}{=}         \beta_v c_1 s_1
              \stackrel{\eqref{eq:octahedron-in-D}}{=}  \beta_v (\Sigma f)q_1
              =                                         f\extn{1} \beta_u q_1
              \stackrel{\eqref{eq:commutes}}{=}         f\extn{1} q \beta_{af},
\]
where the unmarked equality follows by definition of $f\extn{1}$ in Definition~\ref{def:shift-in-Z}. Therefore $(cs-f\extn{1}q)\beta_{af} = 0$, so that by Lemma~\ref{lem:zeromaptoZ} we have $cs = f\extn{1} q$.

\medskip

We now show that the sequence $\trilabelsZ{z_f}{z_{af}}{z_a}{r}{s}{t}$ is isomorphic to a standard triangle. We start by constructing the standard triangle in $\sZ$ corresponding to the map $r \colon z_f \to z_{af}$. For this we will have to choose a specific triangle occurring in a $3\times 3$ diagram.

\Step{3} {\it There is a $3\times 3$ diagram in which each square is commutative except the bottom right-hand square, which is anticommutative:
\begin{equation} \label{3by3}
\xymatrix@!R=8px{
s_f \ar[r]^{\alpha_f} \ar[d]_-{\sigma} & c_f \ar[r]^{\beta_f} \ar[d]^{r_1}       & z_f \ar[r]^{\gamma_f} \ar[d]^r        & \Sigma s_f \ar[d] \\
s_{af} \ar[r]_{\alpha_{af}} \ar[d]       & c_{af} \ar[r]_{\beta_{af}} \ar[d]^{s_1}  & z_{af} \ar[r]_{\gamma_{af}} \ar[d]^{s'}  & \Sigma s_{af} \ar[d] \\
c_\sigma \ar[d] \ar[r]                & c_a \ar[r]_-{\kappa} \ar[d]^{t_1}    & c_r \ar[r] \ar[d]^{t'}               & \Sigma c_\sigma \ar[d] \\
\Sigma s_f \ar[r]_{\Sigma \alpha_f}      & \Sigma c_f \ar[r]_{\Sigma \beta_f}     & \Sigma z_f \ar[r]_{\Sigma \gamma_f}     & \Sigma^2 s_f.
 }
\end{equation}}

\medskip
\noindent
Taking the top left-hand square of diagram \eqref{eq:define-r-s}, we can apply \cite[Proposition 1.1.11]{BBD} to obtain the $3\times 3$ diagram \eqref{3by3}. Note that in the proof of \cite[Proposition 1.1.11]{BBD}, the triangles corresponding to the two top rows and two left-hand columns can be freely chosen. Therefore, given this choice, as noted in Step 1, the uniqueness of $r$ making the top two right-hand squares commute forces the morphim in this position in the $3\times 3$ diagram to be $r$. 

\medskip

We now consider the triangle comprising the third column of diagram \eqref{3by3} and construct the corresponding triangle in $\sZ$ according to Definition~\ref{def:cones-in-Z}.
\begin{equation} \label{eq:cone-r}
\xymatrix@!R=8px{
             &                                       & s_r \ar[r] \ar[d]^{\alpha_r}           & s_{\Sigma z_f} \ar[d]^{\alpha_{z_f}} \\
z_f \ar[r]^r & z_{af} \ar[r]^{s^\prime} \ar[dr]_{\tilde{s}} & c_r \ar[r]^{t^\prime} \ar[d]^{\beta_r}   & \Sigma z_f \ar[d]^{\beta_{z_f}}  \\
             &                                       & z_r \ar[r]^{\tilde{t}} \ar[d]^{\gamma_r} & z_f \extn{1} \ar[d]^{\gamma_{z_f}} \\
             &                                       & \Sigma s_r \ar[r]                    & \Sigma s_{\Sigma z_f}
}
\end{equation}

\Step{4} {\it There is a morphism $\zeta \colon z_a \to z_r$ such that the following diagram commutes.
\begin{equation}\label{eq:isooftriangles}
\xymatrix@!R=8px{
z_f \ar[r]^r \ar@{=}[d] & z_{af} \ar[r]^s \ar@{=}[d] & z_a \ar[r]^t \ar[d]^{\zeta} & z_f\extn{1} \ar@{=}[d] \\
z_f \ar[r]_r            & z_{af} \ar[r]_{\tilde{s}}    & z_r \ar[r]_{\tilde{t}}            & z_f\extn{1}}
\end{equation}}

\medskip
\noindent
Consider the morphism $\kappa \colon c_a \to c_r$ occurring in diagram \eqref{3by3}. Since $\sZ \in \sS\orth$, we have $(\beta_r \kappa) \alpha_a = 0$. Therefore we have the following factorisation.
\begin{equation} \label{eq:define-zeta}
\xymatrix@!R=8px{
s_a \ar[r]^-{\alpha_a} \ar@{..>}[dr]_-{0} & c_a \ar[r]^-{\beta_a} \ar[d]^-{\beta_r\kappa} & z_a \ar[r]^-{\gamma_a} \ar@/^1pc/@{-->}[dl]^-{\zeta} & \Sigma s_a \\
                                         & z_r                                          &                                              &
}
\end{equation}
We now need to check that $\zeta$ makes diagram \eqref{eq:isooftriangles} commute.
To see that the central square of \eqref{eq:isooftriangles} commutes, we have the following sequence of equalities,
\[
\zeta s \beta_{af} \stackrel{\eqref{eq:define-r-s}}{=}   \zeta \beta_a s_1 
                  \stackrel{\eqref{eq:define-zeta}}{=}  \beta_r \kappa s_1 
                  \stackrel{\eqref{3by3}}{=}            \beta_r s^\prime \beta_{af}.
\]
Hence $(\zeta s - \beta_r s')\beta_{af} = 0$, so that  $\zeta s= \beta_r s^\prime = \tilde{s}$ by Lemma~\ref{lem:zeromaptoZ}. 

For the commutativity of the right-hand square of diagram \eqref{eq:isooftriangles}, we have,
\begin{align*}
\tilde{t} \zeta \beta_a \stackrel{\eqref{eq:define-zeta}}{=} &   \tilde{t} \beta_r \kappa
                        \stackrel{\eqref{eq:cone-r}}{=}          \beta_{z_f} t^\prime \kappa
                        \stackrel{\eqref{3by3}}{=}               \beta_{z_f} (\Sigma \beta_f) t_1      
                        \stackrel{\eqref{eq:octahedron-in-D}}{=} \beta_{ z_f} (\Sigma \beta_f) (\Sigma g_1) c_1 \\
                  &     \stackrel{\eqref{eq:commutes}}{=}         \beta_{z_f} (\Sigma g) c_1 
                        =                                         g\extn{1} \beta_{v} c_1        
                        \stackrel{\eqref{eq:commutes}}{=}         g\extn{1} c \beta_a  
                        \stackrel{\eqref{eq:octahedral}}{=}       t \beta_a,
\end{align*}
where the unlabelled equality follows by the definition of $g\extn{1}$ in Definition~\ref{def:shift-in-Z}.
It therefore follows that $(\tilde{t} \zeta - t)\beta_a = 0$, so that Lemma~\ref{lem:zeromaptoZ} implies $\tilde{t} \zeta = t$. Therefore, diagram \eqref{eq:isooftriangles} commutes.

\Step{5} {\it The morphism $\zeta \colon z_a \to z_r$ in diagram \eqref{eq:isooftriangles} is an isomorphism.}

\medskip
\noindent

In order to show that $\zeta \colon z_a \to z_r$ is an isomorphism we shall need the following lemma, which asserts that the cone of the morphism $\sigma \colon s_f \to s_{af}$ in diagram \eqref{3by3} lies in $\extn{\sS}$. As its proof is quite involved, we defer the proof of the lemma until after we have completed the proof of Theorem~\ref{thm:triangulated}.

\begin{lemma} \label{lem:cone-of-sigma}
In the $3 \times 3$ diagram \eqref{3by3} above, the object $c_\sigma \in \extn{S}$.
\end{lemma}

Consider the octahedral diagram in $\sD$ coming from the composition $\beta_r \kappa$.
\[
\xymatrix@!R=8px{
                             & s_r \ar@{=}[r] \ar[d]       & s_r \ar[d]                           &\\
c_a \ar[r]^{\kappa} \ar@{=}[d] & c_r \ar[r] \ar[d]^{\beta_r}   & \Sigma c_\sigma \ar[r] \ar[d]         & \Sigma c_a \ar@{=}[d] \\
c_a \ar[r]_{\beta_r \kappa}      & z_r \ar[r] \ar[d]           & \Sigma s \ar[r]_{\Sigma \alpha} \ar[d]  & \Sigma c_a \\
                             & \Sigma s_r \ar@{=}[r]       & \Sigma s_r                           &
}
\] 
Since, by Lemma~\ref{lem:cone-of-sigma}, $c_\sigma \in \extn{\sS}$, we have $s \in \extn{\sS}$. 

We now claim that $\alpha \colon s \to c_a$ is a minimal right $\extn{\sS}$-approximation of $c_a$.
Since $z_r \in \sZ \subset \sS\orth$, it is clear that $\alpha$ is a right $\extn{\sS}$-approximation. By Lemma~\ref{lem:basic-props}, $s \simeq s_a \oplus s'$ for some $s' \in \extn{\sS}$, and $z_r \simeq z_a \oplus \Sigma s'$. But since $\sZ \subset (\Sigma \sS)\orth$ it follows that $s' = 0$, i.e. $s \simeq s_a$ and $\alpha$ is minimal. 

Now consider the following diagram.
\[
\xymatrix@!R=8px{
s_a \ar[r]^{\alpha_a} \ar@{-->}[d]_{\pi} & c_a \ar[r]^{\beta_a} \ar@{=}[d]       & z_a \ar[r]^{\gamma_a} \ar[d]^{\zeta} & \Sigma s_a \ar@{-->}[d]^-{\Sigma \pi} \\
s \ar[r]_{\alpha} \ar@{-->}[d]_{\pi'}    & c_a \ar[r]_{\beta_r \kappa} \ar@{=}[d]  & z_r \ar[r] \ar@{-->}[d]^{\zeta'}   & \Sigma s \ar@{-->}[d]^-{\Sigma \pi'} \\
s_a \ar[r]_{\alpha_a}                   & c_a \ar[r]_{\beta_a}                  & z_a \ar[r]_{\gamma_a}               & \Sigma s_a
}
\]
By Step 4, $\zeta \beta_a = \beta_r \kappa$, so that $\TRthree$ in $\sD$ asserts the existence of the morphism $\pi \colon s_a \to s$. Using the fact that $\alpha_a \colon s_a \to c_a$ is a right $\extn{\sS}$-approximation, we obtain the existence of a morphism $\pi' \colon s \to s_a$ making the diagram commute. Finally, applying $\TRthree$ again provides the morphism $\zeta' \colon z_r \to z_a$.

Now, right minimality of $\alpha_a$ implies that $\pi' \pi$ is an isomorphism, whence it follows that $\zeta' \zeta$ is an isomorphism and therefore $\zeta$ is a split monomorphism. 
Similarly, right minimality of $\alpha$ implies that $\pi \pi'$ is an isomorphism and therefore so too is $\zeta \zeta'$, in which case $\zeta$ is a split epimorphism. 
Hence $\zeta \colon z_a \to z_r$ is an isomorphism, as claimed. 
This completes the proof of Theorem~\ref{thm:triangulated}.
\end{proof}

\begin{remark}
Note that if we additionally assume $\sZ \subset (\Sigma^2 \sS)\orth$ one can obtain that $\zeta \colon z_a \to z_r$ is an isomorphism avoiding Lemma~\ref{lem:cone-of-sigma}. This assumption is benign when $\sD$ is $(-w)$-Calabi-Yau for $w \geq 2$ and $\sS$. However, it is in general false when $w = 1$.
\end{remark}

\begin{proof}[Proof of Lemma~\ref{lem:cone-of-sigma}]
We note that the proof of this lemma also requires the full force of the hypotheses of Setup~\ref{blanket-setup}.
First observe that if $s_f = 0$ then $c_\sigma \simeq s_{af} \in \extn{\sS}$, so we may assume that $s_f \neq 0$.
The strategy is to use Lemma~\ref{lem:Dugas}.
The argument is rather intricate so we proceed in a sequence of steps.

\Step{1} 
{\it If $s_f \neq 0$ then $\sigma \neq 0$.}

\medskip
\noindent
Suppose $r_1 \alpha_f = 0$ and consider the octahedral diagram arising from this composition.
\[
\xymatrix{
                              & \Sigma^{-1} c_a \ar[d] \ar@{=}[r]   & \Sigma^{-1} c_a \ar[d]         & \\
s_f \ar@{=}[d] \ar[r]^-{\alpha_f} & c_f \ar[d]_-{r_1} \ar[r]^-{\beta_f} & z_f \ar[d] \ar[r]^-{\gamma_f} & \Sigma s_f \ar@{=}[d] \\
s_f \ar[r]_-{0}               & c_{af} \ar[d] \ar[r]                & c_{af} \oplus \Sigma s_f \ar[d] \ar[r] & \Sigma s_f \\
                             & c_a \ar@{=}[r]                      & c_a                         & }
\]
Now $z_f \in \sZ \subset (\Sigma \sS)\orth$ and $c_a \in (\Sigma \sS)\orth$ by Lemma~\ref{lem:cone-orth}, which forces $\Sigma s_f \in (\Sigma \sS)\orth$. Hence we obtain $s_f = 0$, a contradiction. Thus $\alpha_{af} \sigma = r_1 \alpha_f \neq 0$. In particular $\sigma \neq 0$.

\Step{2}
{\it If $s_f \neq 0$ then $\Hom_\sD(\extn{\sS},r_1) \colon \Hom_\sD(\extn{\sS},c_f) \to \Hom_\sD(\extn{\sS},c_{af})$ is injective.}

\medskip
\noindent
By Step 1 we also know that $r_1 \neq 0$.
Now let $s \in \extn{\sS}$ and suppose $\pi \colon s \to c_f$ satisfies $r_1 \pi = 0$. We therefore have the following factorisation,
\[
\xymatrix{
                                & \ar@{-->}[dl]_-{\pi'} s \ar[d]^-{\pi} \ar@{..>}[dr]^-{0} &                    & \\
\Sigma^{-1} c_a \ar[r]_-{-\Sigma t_1} & c_f \ar[r]_-{r_1}                                        & c_{af} \ar[r]_-{s_1} & c_a .
}
\]
But $\pi' = 0$ since $c_a \in (\Sigma \sS)\orth$. Thus $\pi = 0$ and $\Hom_\sD(\extn{\sS},r_1)$ is injective.

Note that by Step 1, if $s_f$ has $\sS$-length one, then $c_\sigma \in \extn{\sS}$ by Lemma~\ref{lem:Dugas}. Therefore, we now assume that $s_f$ has $\sS$-length $n > 1$ for the remainder of the argument. We now fix an $\sS$-composition series for $s_f$:
\begin{align*}
& s_1 \rightlabel{i_{n-1}} x_2 \rightlabel{j_{n-1}} s_2 \rightlabel{k_{n-1}} \Sigma s_1 \\
& x_2 \rightlabel{i_{n-2}} x_3 \rightlabel{j_{n-2}} s_3 \rightlabel{k_{n-2}} \Sigma x_2 \\
& \vdots \\
& x_{n-1} \rightlabel{i_1} s_f \rightlabel{j_1} s_n \rightlabel{k_1} \Sigma x_{n-1}.
\end{align*}
For each $1 \leq p < n$ we have the following octahedral diagram, where $x_1 = s_1$, $x_n = s_f$, $\sigma_0 = \sigma$ and $c_0 = c_\sigma$. 
\begin{equation}\label{eq:octahedralseries}
\xymatrix{
                                & \Sigma^{-1} c_{p-1} \ar[d] \ar@{=}[r]       & \Sigma^{-1} c_{p-1} \ar[d]              & \\  
x_{n-p} \ar@{=}[d] \ar[r]^-{i_p} & x_{n-p+1} \ar[d]^-{\sigma_{p-1}} \ar[r]^-{j_p} & s_{n-p+1} \ar[d]^-{\theta_p} \ar[r]^-{k_p} & \Sigma x_{n-p} \ar@{=}[d] \\
x_{n-p} \ar[r]_-{\sigma_p}       & s_{af} \ar[d]^-{\tau_{p-1}} \ar[r]_-{\tau_p}   & c_p \ar[d] \ar[r]                       & \Sigma x_{n-p} \\
                               & c_{p-1} \ar@{=}[r]                              & c_{p-1}                                        &
}
\end{equation}

\Step{3} {\it The map $\sigma_{p} \colon x_{n-p} \to s_{af}$ is nonzero for each $1 \leq p < n$.}

\medskip
\noindent
By repeated use of the Octahedral Axiom in $\sD$, there is a triangle 
\[
\xymatrix{
x_{n-p} \ar[r]^-{i_1 i_2 \cdots i_p} & s_f \ar[r] & y_p \ar[r] & \Sigma x_{n-p},
}
\]
in which $y_p \in \extn{\sS}$ has $\sS$-length $p < n$. Now consider the following commutative diagram.
\[
\xymatrix{
x_{n-p} \ar[dr]^-{i_1 \cdots i_p}  \ar@/^2pc/[drr]^-{\alpha_f i_1 \cdots i_p} \ar@/_2pc/[ddr]_-{\sigma_p} &                                        &                  \\
                                                                                                      & s_f \ar[d]^-{\sigma} \ar[r]^-{\alpha_f} & c_f \ar[d]^-{r_1} \\
                                                                                                      & s_{af} \ar[r]_-{\alpha_{af}}             & c_{af}
}
\]
If $\alpha_f i_1 \cdots i_p = 0$ then we have a factorisation
\[
\xymatrix{
x_{n-p} \ar@{..>}[dr]_-{0} \ar[r]^-{i_1 \cdots i_p} & s_f \ar[d]_-{\alpha_f} \ar[r] & y_p \ar@{-->}[dl]^-{\exists\, \alpha_p} \ar[r] & \Sigma x_{n-p} \\
                                                  & c_f                          &                               &
}
\]
making $\alpha_p \colon y_p \to c_f$ into a right $\extn{\sS}$-approximation. Since $\alpha_f \colon s_f \to c_f$ is a minimal right $\extn{\sS}$-approximation it follows that $s_f$ is a direct summand of $y_p$ by Lemma~\ref{lem:basic-props}. Therefore, by \ref{lem:Dugas} (3), $s_f$ has $\sS$-length at most $p  < n$, contradicting our assumption on the $\sS$-length of $s_f$. Hence $\alpha_f i_1 \cdots i_p \neq 0$. Now by the injectivity of $\Hom_\sD(\extn{\sS},r_1)$ from Step 2, it follows that $r_1 \alpha_f i_1 \cdots i_p \neq 0$. Hence $\alpha_{af} \sigma_p \neq 0$ so that $\sigma_p \neq 0$, as claimed.

\Step{4} {\it The map $\theta_1 \colon s_n \to c_1$ is nonzero.}

\medskip
\noindent
We establish the stronger statement that $\tau_1 \sigma \neq 0$. If $\tau_1 \sigma = 0$ then we have the following factorisation:
\begin{equation}\label{eq:i'1}
\xymatrix{
                         & s_f \ar@{-->}[dl]_-{i_1'} \ar[d]^-{\sigma} \ar@{..>}[dr]^-{0} &            & \\
x_{n-1} \ar[r]_-{\sigma_1} & s_{af} \ar[r]_-{\tau_1}                                       & c_1 \ar[r] & \Sigma x_{n-1}
}
\end{equation}
We claim that $\alpha_f i_1 \colon x_{n-1} \to c_f$ is a right $\extn{\sS}$-approximation. From this it follows that $s_f$ is a direct summand of $x_{n-1}$ by Lemma~\ref{lem:basic-props} so that by \cite[Lemma 2.7]{Dugas}, $s_f$ has $\sS$-length $n-1$, contradicting our starting assumption. Hence $\tau_1 \sigma \neq 0$. 

We now establish the claim. Suppose $\phi \colon x \to c_f$ is a morphism with $x \in \extn{\sS}$. Then since $\alpha_f$ is a right $\extn{\sS}$-approximation there exists $\phi' \colon x \to s_f$ such that $\phi = \alpha_f \phi'$. Now,
\[
r_1 \phi = r_1 \alpha_f \phi' \stackrel{\eqref{eq:define-r-s}}{=} \alpha_{af} \sigma \phi' \stackrel{\eqref{eq:i'1}}{=} \alpha_{af} \sigma_1 i_1' \phi' \stackrel{\eqref{eq:octahedralseries}}{=} \alpha_{af} \sigma i_1 i_1' \phi' \stackrel{\eqref{eq:define-r-s}}{=} r_1 \alpha_f i_1 i_1' \phi',
\]
whence $r_1(\phi - \alpha_f i_1 i_1' \phi') = 0$. We therefore have the following factorisation.
\[
\xymatrix{
                      & x \ar@{-->}[dl]_-{\exists} \ar[d]^-{\phi - \alpha_f i_1 i_1' \phi'} \ar@/^3.5pc/@{..>}[dr]^-{0} &              & \\
\Sigma^{-1} c_a \ar[r] & c_f \ar[r]_-{r_1}                                                                      & c_{af} \ar[r] & c_a
}
\]
However, $c_a \in (\Sigma \sS)\orth$ (see Lemma~\ref{lem:cone-orth}) implies that $\phi = \alpha_f i_1 i_1' \phi'$ and $\alpha_f i_1 \colon x_{n-1} \to c_f$ is a right $\extn{\sS}$-approximation, as claimed.

\Step{5} {\it The map $\theta_{n-1} \colon s_2 \to c_{n-1}$ is nonzero.}

\medskip
\noindent
We again show the stronger statement that $\tau_{n-1} \sigma_{n-2} \neq 0$. If $\tau_{n-1} \sigma_{n-2} = 0$ then we have the following factorisation.
\begin{equation}\label{eq:i-n-1-tilde}
\xymatrix{
                         & x_2 \ar@{-->}[dl]_-{\exists \, \tilde{i}_{n-1}} \ar[d]^-{\sigma_{n-2}} \ar@/^1pc/@{..>}[dr]^-{0} &               &  \\ 
s_1 \ar[r]_-{\sigma_{n-1}} & s_{af} \ar[r]_-{\tau_{n-1}}                                                            & c_{n-1} \ar[r] & \Sigma s_1
}
\end{equation}
Now $0 \neq \sigma_{n-1} = \sigma_{n-2} i_{n-1} = \sigma_{n-1} \tilde{i}_{n-1} i_{n-1}$ shows that $\tilde{i}_{n-1} i_{n-1} \neq 0$ and is thus an isomorphism. This means that $\tilde{i}_{n-1}$ is a split epimorhism (and $i_{n-1}$ is a split monomorphism). Hence, we can replace the triangle $\trilabels{s_1}{x_2}{s_2}{i_{n-1}}{j_{n-1}}{k_{n-1}}$ occurring in the $\sS$-composition series for $s_f$ with the triangle
\[
\trilabels{s_2}{x_2}{s_1}{\tilde{j}_{n-1}}{\tilde{i}_{n-1}}{0}.
\]
Applying the Octahedral Axiom in $\sD$ to the composition $\sigma_{n-2} \tilde{j}_{n-1}$ we get the following diagram.
\begin{equation}\label{eq:sigma-j-tilde-oct}
\xymatrix{
                                & \Sigma^{-1} c_{n-2} \ar[d] \ar@{=}[r]       & \Sigma^{-1} c_{n-2} \ar[d]              & \\  
s_2 \ar@{=}[d] \ar[r]^-{\tilde{j}_{n-1}} & x_2 \ar[d]^-{\sigma_{n-2}} \ar[r]^-{\tilde{i}_{n-1}} & s_1 \ar[d] \ar[r]^-{0} & \Sigma s_2 \ar@{=}[d] \\
s_2 \ar[r]_-{\tilde{\sigma}_{n-1}}       & s_{af} \ar[d]^-{\tau_{n-2}} \ar[r]_-{\tilde{\tau}_{n-1}}   & \tilde{c}_{n-1} \ar[d] \ar[r]                       & \Sigma s_2 \\
                               & c_{n-2} \ar@{=}[r]                              & c_{n-2}                                        &
}
\end{equation}
By the same argument as Step 3, the map $\tilde{\sigma}_{n-1} \neq 0$. But now 
\[
\tilde{\sigma}_{n-1} \stackrel{\eqref{eq:sigma-j-tilde-oct}}{=} \sigma_{n-2} \tilde{j}_{n-1} \stackrel{\eqref{eq:i-n-1-tilde}}{=} \sigma_{n-1} \tilde{i}_{n-1} \tilde{j}_{n-1} = 0, 
\]
giving a contradiction. Hence $\tau_{n-1} \sigma_{n-2} \neq 0$ and $\theta_{n-1} \neq 0$ as claimed.

\Step{6} {\it For $1 < p < n-1$, the map $\theta_p \colon s_{n-p+1} \to c_p$ is nonzero.}

\medskip
\noindent
If $\theta_p = 0$ then $k_p = 0$ and the triangle $\trilabels{x_{n-p}}{x_{n-p+1}}{s_{n-p+1}}{i_p}{j_p}{k_p}$ occurring in the $\sS$-composition series of $s_f$ is split. Therefore $j_p$ is a split epimorphism with right inverse $\tilde{j}_p$, say. Consider the corresponding split triangle,
\[
\trilabels{s_{n-p+1}}{x_{n-p+1}}{x_{n-p}}{\tilde{j}_p}{\tilde{i}_p}{0},
\]
and the octahedral diagram arising from the composition $\sigma_{p-1} \tilde{j}_p$.
\[
\xymatrix{
                                & \Sigma^{-1} c_{p-1} \ar[d] \ar@{=}[r]       & \Sigma^{-1} c_{p-1} \ar[d]              & \\  
s_{n-p+1} \ar@{=}[d] \ar[r]^-{\tilde{j}_p} & x_{n-p+1} \ar[d]^-{\sigma_{p-1}} \ar[r]^-{\tilde{i}_p} & x_{n-p} \ar[d] \ar[r]^-{0} & \Sigma s_{n-p+1} \ar@{=}[d] \\
s_{n-p+1} \ar[r]_-{\tilde{\sigma}_p}       & s_{af} \ar[d]^-{\tau_{p-1}} \ar[r]_-{\tilde{\tau}_p}   & \tilde{c}_p \ar[d] \ar[r]                       & \Sigma s_{n-p+1} \\
                               & c_{p-1} \ar@{=}[r]                              & c_{p-1}                                        &
}
\]
By the argument of Step 3, we see again that $\tilde{\sigma}_p = \sigma_{p-1} \tilde{j}_p \neq 0$. Now, $\tau_p \tilde{\sigma}_p = \tau_p \sigma_{p-1} \tilde{j}_p \stackrel{\eqref{eq:octahedralseries}}{=} \theta_p j_p \tilde{j}_p = \theta_p = 0$, so that we have the following factorisation.
\[
\xymatrix{
                         & s_{n-p+1} \ar@{-->}[dl]_-{\exists \psi} \ar[d]^-{\tilde{\sigma}_p} \ar@/^1pc/@{..>}[dr]^-{0} &            & \\
x_{n-p} \ar[r]_-{\sigma_p} & s_{af} \ar[r]_-{\tau_p}                                                                    & c_p \ar[r] & \Sigma x_{n-p}
}
\]
Note that $\psi \neq 0$ because $\sigma_p \psi = \tilde{\sigma}_p \neq 0$. We claim that $\tilde{j}_p = i_p \psi$. We first show that this claim completes the argument before establishing the claim. Since $\tilde{j}_p$ is a right inverse for $j_p$ we have $1_{s_{n-p+1}} = j_p \tilde{j}_p = j_p i_p \psi = 0$, where the final equality follows from the composition of two consecutive morphisms in a triangle. This can only occur if $s_{n-p+1} = 0$, in which case $x_{n-p+1} \cong x_{n-p}$ has $\sS$-length at most $n-p$. In particular, this means $s_f$ has $\sS$-length strictly smaller than $n$, contradicting our starting assumption. Therefore, if $\tilde{j}_p = i_p \psi$ then we obtain a contradiction to the assumption that $\theta_p = 0$.

In order to establish the claim, we will need the following well-known lemma.

\begin{lemma} \label{lem:serre-factor}
Let $\sD$ be a triangulated category with Serre functor $\SSS \colon \sD \to \sD$. Let $z, w \in \sD$ and suppose we have a composition of morphisms $z \rightlabel{h} \SSS z \rightlabel{h'} w$ in which $h'$ is not a split monomorphism, and $h$ is the universal map $z \to \SSS z$. Then $h' h = 0$.
\end{lemma}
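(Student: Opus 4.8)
The plan is to recognise the universal map $h\colon z\to\SSS z$ as the connecting morphism of the Auslander--Reiten triangle ending at $z$, and then to read off the conclusion from the defining property of a minimal left almost split morphism. We may assume that $z$ is indecomposable and nonzero: this is the situation in which the statement is applied, and for $z=0$ there is nothing to prove.

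Since $\sD$ is Hom-finite, Krull--Schmidt and $\kk$-linear over the algebraically closed field $\kk$ and carries a Serre functor $\SSS$, it has Auslander--Reiten triangles (\cite{RvdB}; cf. \cite{Happel}). Writing $\tau=\Sigma^{-1}\SSS$ for the Auslander--Reiten translate, the Auslander--Reiten triangle ending at the indecomposable $z$ has the form
\[
\tau z\rightlabel{a}E\rightlabel{b}z\rightlabel{\gamma}\Sigma\tau z,
\]
so that $\Sigma\tau z=\SSS z$ and $\gamma\colon z\to\SSS z$; here $a$ is minimal left almost split and $b$ is minimal right almost split. Applying $\Hom_\sD(z,-)$, any non-invertible $\phi\in\Hom_\sD(z,z)$ fails to be a split epimorphism, hence factors through $b$, hence satisfies $\gamma\phi=0$; since $\gamma\,\id_z=\gamma\neq 0$ and $\Hom_\sD(z,z)/\mathrm{rad}\simeq\kk$, this exhibits $\gamma$ as a generator of the one-dimensional socle of $\Hom_\sD(z,\SSS z)$, viewed as a right $\Hom_\sD(z,z)$-module, equivalently as (a scalar multiple of) the image of the canonical surjection $\Hom_\sD(z,z)\to\kk$ under the Serre duality isomorphism $\Hom_\sD(z,\SSS z)\simeq D\Hom_\sD(z,z)$. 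In either description $\gamma$ is the universal map up to a nonzero scalar, so $h=c\gamma$ for some $c\in\kk^{\times}$ and it suffices to prove $h'\gamma=0$.

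Rotating the Auslander--Reiten triangle gives the distinguished triangle $E\rightlabel{b}z\rightlabel{\gamma}\SSS z\rightlabel{-\Sigma a}\Sigma E$; in particular $(\Sigma a)\gamma=0$, and $\Sigma a\colon\SSS z\to\Sigma E$ is again minimal left almost split since $\Sigma$ is an autoequivalence. As $h'\colon\SSS z\to w$ is not a split monomorphism, the defining property of left almost split morphisms yields a factorisation $h'=\psi\circ(\Sigma a)$ for some $\psi\colon\Sigma E\to w$, whence $h'\gamma=\psi(\Sigma a)\gamma=0$ and therefore $h'h=c\,h'\gamma=0$. (Alternatively, one can remain inside Serre duality: $h'h=0$ if and only if the functional $g\mapsto\langle h'h,g\rangle$ on $\Hom_\sD(w,\SSS z)$ vanishes, and by the symmetry of the Serre pairing together with the socle characterisation of $h$ this comes down to the observation that $gh'\in\Hom_\sD(\SSS z,\SSS z)$ is non-invertible whenever $h'$ is not a split monomorphism.) There is no essential obstacle here; the points needing care are the identification of the paper's notion of ``universal map'' with the Auslander--Reiten connecting morphism, and the reduction to indecomposable $z$.
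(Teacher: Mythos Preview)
The paper does not prove this lemma; it is stated as ``well-known'' and invoked without argument. Your proof via Auslander--Reiten triangles is the standard one and is correct: identifying the universal map with (a scalar multiple of) the connecting morphism $\gamma$ of the AR triangle ending at $z$, then using that $\Sigma a$ is left almost split to factor any non-split-monomorphism $h'$ through it, gives $h'\gamma=0$ immediately from $(\Sigma a)\gamma=0$.

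Two small remarks. First, your reduction to indecomposable $z$ is entirely appropriate here: in the paper the lemma is applied only to objects $s_n\in\sS$, which are bricks (so in particular indecomposable with $\End(s_n)=\kk$), and the phrase ``universal map'' is only unambiguous in that setting anyway. Second, your parenthetical alternative via the Serre pairing is morally the same argument stripped of the AR-triangle language: the key point in both is that $h$ is annihilated by postcomposition with any non-invertible endomorphism of $\SSS z$, and $gh'$ is non-invertible precisely because $h'$ is not split mono. Either route is fine; the AR-triangle version you wrote out in full is the cleaner one to present.
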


Observing that $\tilde{\sigma}_p = \sigma_p \psi \stackrel{\eqref{eq:octahedralseries}}{=} \sigma_{p-1} i_p \psi$ and $\tilde{\sigma}_p = \sigma_{p-1} \tilde{j}_p$, we see that $\sigma_{p-1}(\tilde{j}_p - i_p \psi) = 0$. Recall that $\sigma_{p-1} = \sigma i_1 \cdots i_{p-1}$ and consider the following diagram.
\[
\xymatrix{
s_{n-p+1} \ar[dr]^-{i_1 \cdots i_{p-1}(\tilde{j}_p - i_p \psi)}  \ar@{..>}@/_2pc/[ddr]_-{0} &                                        &                  \\
                                                                                                      & s_f \ar[d]^-{\sigma} \ar[r]^-{\alpha_f} & c_f \ar[d]^-{r_1} \\
                                                                                                      & s_{af} \ar[r]_-{\alpha_{af}}             & c_{af}
}
\]
In particular, this means that $r_1 \alpha_f i_1 \cdots i_{p-1}(\tilde{j}_p - i_p \psi) = 0$, whence by the injectivity of $\Hom_\sD(\extn{\sS},r_1)$ shown in Step 2, we have $\alpha_f i_1 \cdots i_{p-1}(\tilde{j}_p - i_p \psi) = 0$. We therefore get the following factorisation.
\[
\xymatrix{
                      & s_{n-p+1} \ar@{-->}[dl]_-{\exists} \ar[d]^-{i_1 \cdots i_{p-1}(\tilde{j}_p - i_p \psi)}  &            & \\ 
\Sigma^{-1} z_f \ar[r] & s_f \ar[r]_-{\alpha_f}                                                                                         & c_f \ar[r] & z_f
}
\]
But since $z_f \in \sZ$, the morphism labelled $\exists$ must be zero, whence $i_1 \cdots i_{p-1}(\tilde{j}_p - i_p \psi) = 0$.

We now, therefore, have the factorisation below.
\[
\xymatrix{
                      & s_{n-p+1} \ar@{-->}[dl]_-{\exists} \ar[d]^-{i_2 \cdots i_{p-1}(\tilde{j}_p - i_p \psi)}  &            & \\ 
\Sigma^{-1} s_n \ar[r]_-{-\Sigma^{-1}k_1} & x_{n-1} \ar[r]_-{i_1}                                                                                         & s_f \ar[r] & s_n
}
\]
Now, if assumption (2) of Setup~\ref{blanket-setup} holds, the morphism labelled $\exists$ in this diagram is also zero, so that $i_2 \cdots i_{p-1}(\tilde{j}_p - i_p \psi) = 0$. Otherwise, if assumption (1) holds, then $\sS$ is an $\SSS_{-1}$-subcategory, so that
\[
\Hom_\sD(s_{n-p+1},\Sigma^{-1} s_n) = \Hom_\sD(s_{n-p+1}, \SSS s_n) \simeq D\Hom_\sD(s_n,s_{n-p+1}). %= \kk.
\]
If $s_n \not\simeq s_{n+p-1}$ then the morphism labelled $\exists$ is zero and we conclude that $i_2 \cdots i_{p-1}(\tilde{j}_p - i_p \psi) = 0$. If $s_n \simeq s_{n-p+1}$ and the morphism labelled $\exists$ is nonzero then it is, up to scalar, the universal morphism $s_n \to \SSS s_n$. Since $-\Sigma^{-1} k_1$ is not a split monomorphism, for otherwise $s_f$ would be a direct summand of $x_{n-1}$ and by \cite[Lemma 2.7]{Dugas} be of $\sS$-length strictly smaller than $n$, we can invoke Lemma~\ref{lem:serre-factor} to conclude that $i_2 \cdots i_{p-1}(\tilde{j}_p - i_p \psi) = 0$ also in this case. Repeating this argument a further $p-2$ times, we obtain that $\tilde{j}_p - i_p \psi = 0$, which is what we claimed, concluding Step 6.

\medskip
\noindent{\bf Conclusion.}
Since $\sigma_{n-1} \neq 0$, Lemma~\ref{lem:Dugas} implies that $c_{n-1} \in \extn{\sS}$. Using Lemma~\ref{lem:Dugas} again and the fact that $\theta_p \neq 0$ for $1 \leq p < n$, we obtain that $c_{p-1} \in \extn{\sS}$. In particular $c_0 = c_\sigma \in \extn{\sS}$, which is what we aimed to show. 
\end{proof}

%=========================================================================
% SECTION
\section{Calabi-Yau reduction} \label{sec:cy-reduction}
%=========================================================================

For a collection of objects $\sX$ of $\sD$ and $w \geq 1$, we define the following perpendicular categories: 
\begin{align*}
\sX\orthw    & \coloneqq \{d \in \sD \mid \Hom (\Sigma^i \sX, d) = 0 \text{ for } i= 0, \ldots, w\}, \text{ and} \\
{}\orthw \sX & \coloneqq \{d \in \sD \mid \Hom (d, \Sigma^i \sX) = 0 \text{ for } i= -w, \ldots, 0\}.
\end{align*}
Recall the definition of $\SSS_w$-subcategory from Definition~\ref{def:Sw-subcat}.
In this section, we will consider the following set up.

\begin{setup}\label{setup:reduction}
Let $w \geq 1$. Let $\sS$ be a $w$-orthogonal collection and $\sZ$ be a subcategory of $\sD$ satisfying the following conditions:
\begin{enumerate}
\item $\sS$ is an $\SSS_{-w}$-subcategory and $\extn{\sS}$ is functorially finite; and,
\item $\sZ = \sS\orthw$. 
\end{enumerate}
\end{setup}

The following lemma is a routine check.

\begin{lemma} \label{lem:Sw-subcat}
Let $\sS$ and $\sZ$ be as in Setup~\ref{setup:reduction}. Then $\sZ = \sS\orthw = {}\orthw \sS$ is also an $\SSS_{-w}$-subcategory.
\end{lemma}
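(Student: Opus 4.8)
The plan is to establish the two assertions independently, both by a direct appeal to Serre duality together with the identity $\SSS_{-w}=\SSS\Sigma^{w}$ and the hypothesis $\SSS_{-w}\sS=\sS=\SSS_{-w}^{-1}\sS$ that comes from $\sS$ being an $\SSS_{-w}$-subcategory. Throughout I would use the standard fact that the Serre functor $\SSS$ is a triangle functor, hence commutes with $\Sigma$ up to natural isomorphism, and therefore so do $\SSS_{-w}$ and $\SSS_{-w}^{-1}$.

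First I would prove $\sS\orthw={}\orthw\sS$. For any $d\in\sD$ and $0\leq i\leq w$, Serre duality gives a natural isomorphism $\Hom(\Sigma^{i}\sS,d)\cong D\Hom(d,\SSS\Sigma^{i}\sS)$. Writing $\SSS\Sigma^{i}=\SSS\Sigma^{w}\Sigma^{i-w}=\SSS_{-w}\Sigma^{i-w}\cong\Sigma^{i-w}\SSS_{-w}$ and using $\SSS_{-w}\sS=\sS$, this becomes $\Hom(\Sigma^{i}\sS,d)\cong D\Hom(d,\Sigma^{i-w}\sS)$. As $i$ ranges over $\{0,\ldots,w\}$ the shifted index $i-w$ ranges over $\{-w,\ldots,0\}$, so the vanishing conditions defining $\sS\orthw$ and ${}\orthw\sS$ coincide, giving the equality.

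Next I would check that $\sZ=\sS\orthw$ is an $\SSS_{-w}$-subcategory, that is, that $\SSS_{-w}\sZ=\sZ=\SSS_{-w}^{-1}\sZ$. For $d\in\sZ$ and $0\leq i\leq w$, since $\SSS_{-w}$ is an autoequivalence one has $\Hom(\Sigma^{i}\sS,\SSS_{-w}d)\cong\Hom(\SSS_{-w}^{-1}\Sigma^{i}\sS,d)=\Hom(\Sigma^{i}\SSS_{-w}^{-1}\sS,d)=\Hom(\Sigma^{i}\sS,d)=0$, using that $\SSS_{-w}^{-1}$ commutes with $\Sigma$ and fixes $\sS$; the same computation with $\SSS_{-w}$ and $\SSS_{-w}^{-1}$ swapped handles $\SSS_{-w}^{-1}d$. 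Hence $\SSS_{-w}d,\SSS_{-w}^{-1}d\in\sZ$, so $\SSS_{-w}\sZ\subseteq\sZ$ and $\SSS_{-w}^{-1}\sZ\subseteq\sZ$; applying $\SSS_{-w}$ to the second inclusion and combining with the first gives $\SSS_{-w}\sZ=\sZ$, and symmetrically $\SSS_{-w}^{-1}\sZ=\sZ$.

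As the paper indicates, there is no real obstacle here. The only points needing care are the bookkeeping of the index shift $i\mapsto i-w$ (so that $\{0,\ldots,w\}$ is carried bijectively onto $\{-w,\ldots,0\}$) and explicitly recording that $\SSS$, and hence $\SSS_{\pm w}$, commutes with $\Sigma$. It is worth noting that $w$-orthogonality of $\sS$ plays no role in this argument; only the $\SSS_{-w}$-subcategory hypothesis is used.
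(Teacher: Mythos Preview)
Your proof is correct and is precisely the routine verification the paper alludes to (the paper gives no proof beyond calling it a ``routine check''). The Serre-duality bookkeeping $\Hom(\Sigma^i\sS,d)\cong D\Hom(d,\Sigma^{i-w}\sS)$ together with the index shift $i\mapsto i-w$ is exactly the intended argument, and your observation that only the $\SSS_{-w}$-subcategory hypothesis (not $w$-orthogonality) is needed is accurate.
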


We will now check that this set up satisfies the conditions in Setup~\ref{blanket-setup} and the hypotheses of Theorems~\ref{thm:pretriangulated} and \ref{thm:triangulated}. 

\begin{lemma}\label{lem:setups}
Let $\sS$ and $\sZ$ be as in Setup~\ref{setup:reduction}. Then the following conditions hold:
\begin{enumerate}
\item $\sS$ is either an $\SSS_{-1}$-subcategory of $\sD$ or $\Hom (\Sigma \sS, \sS) = 0$;
\item $\sZ$ is closed under extensions and direct summands;  
\item $(\sZ, \sZ)$ is an $\sS$-mutation pair;
\item the cones in $\sD$ of maps in $\sZ$ lie in $\extn{\sS} * \sZ$; and
\item the cocones in $\sD$ of maps in $\sZ$ lie in $\sZ * \extn{\sS}$.
\end{enumerate}
\end{lemma}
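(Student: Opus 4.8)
The plan is to verify conditions (1)--(5) one at a time, in each case reducing to the perpendicularity descriptions of $\sZ$ afforded by Setup~\ref{setup:reduction} and Lemma~\ref{lem:Sw-subcat}, namely $\sZ = \sS\orthw = {}\orthw\sS$ with $\sS$ an $\SSS_{-w}$-subcategory. For (1) I would split into cases: if $w = 1$ then $\SSS_{-w} = \SSS_{-1}$, so $\sS$ is an $\SSS_{-1}$-subcategory and the first alternative of Setup~\ref{blanket-setup} holds; if $w \geq 2$, taking $k = 1$ in condition (2) of Definition~\ref{def:w-sms} gives $\Hom(\Sigma\sS, \sS) = 0$, the second alternative. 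For (2), $\sZ = \sS\orthw = \bigcap_{i=0}^{w}(\Sigma^i\sS)\orth$ is a finite intersection of subcategories each of which is closed under extensions (apply $\Hom(\Sigma^i\sS, -)$ to a triangle) and under direct summands, and hence so is $\sZ$.

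The substance is in (4) and (5), which are mutually dual, so I would prove (4). A preliminary step I would isolate is the \emph{extended $w$-orthogonality} $\Hom(\Sigma^k\extn{\sS}, \extn{\sS}) = 0$ for $1 \leq k \leq w-1$: since any $w$-orthogonal collection is in particular $1$-orthogonal, Lemma~\ref{lem:Dugas}(1) applies and gives $\extn{\sS} = \bigcup_n (\sS)_n$, whence the claim follows by a double induction on $\sS$-length using the long exact sequences attached to the triangles of an $\sS$-composition series (Definition~\ref{def:length}) together with condition (2) of Definition~\ref{def:w-sms}. Now let $f \colon x \to y$ be a morphism in $\sZ$ with cone $\tri{x}{y}{c_f}$ in $\sD$; applying $\Hom(\Sigma^i\sS, -)$ and using $x, y \in \sS\orthw$ shows $c_f \in \bigcap_{i=1}^{w}(\Sigma^i\sS)\orth$. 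Choose a \emph{minimal} right $\extn{\sS}$-approximation $\alpha \colon s \to c_f$ and complete it to a triangle $\trilabels{s}{c_f}{z}{\alpha}{\beta}{}$. Then $z \in \sS\orth$ by Wakamatsu's lemma (Lemma~\ref{lem:basic-props}(4)); applying $\Hom(\Sigma^i\sS, -)$ to this triangle, extended $w$-orthogonality kills the relevant terms for $2 \leq i \leq w$, while the boundary case $i = 1$ follows from Lemma~\ref{lem:torsion-pair}(1), which asserts that $\Hom(\sS, \alpha)$ is an isomorphism --- here minimality of $\alpha$ is essential, so that the triangle is the minimal decomposition triangle for the torsion pair $(\extn{\sS}, \sS\orth)$. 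Hence $z \in \sS\orthw = \sZ$ and $c_f \in \extn{\sS} * \sZ$; (5) is established by the dual argument.

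Finally, for (3) I would verify the two defining equations of the $\sS$-mutation pair $(\sZ, \sZ)$. The inclusions $\sZ \subseteq {}\orth\sS\orth$, $\sZ \subseteq {}\orth(\Sigma^{-1}\sS)$ and $\sZ \subseteq (\Sigma\sS)\orth$ are immediate from $\sZ = \sS\orthw = {}\orthw\sS$ and $w \geq 1$, while $\sZ \subseteq \Sigma^{-1}(\extn{\sS}*\sZ)$ and $\sZ \subseteq \Sigma(\sZ*\extn{\sS})$ follow from (4) and (5) applied to the zero morphism on an object of $\sZ$, whose cone (resp. cocone) is a shift of that object. For the reverse inclusion in the first equation, given $d$ in the right-hand side, use $\Sigma d \in \extn{\sS}*\sZ$ to obtain a triangle $\Sigma^{-1}s \to d \to \Sigma^{-1}z \to s$ with $s \in \extn{\sS}$ and $z \in \sZ$, and apply $\Hom(-, \Sigma^j\sS)$ for $-w \leq j \leq 0$: the contributions from $\Sigma^{-1}s$ vanish for $-w \leq j \leq -2$ by extended $w$-orthogonality, those from $\Sigma^{-1}z$ vanish since $z \in {}\orthw\sS$, and the cases $j = 0, -1$ are exactly $d \in {}\orth\sS\orth \cap {}\orth(\Sigma^{-1}\sS)$; hence $d \in {}\orthw\sS = \sZ$. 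The second equation is dual. The main obstacle I foresee is condition (4)/(5): getting the non-$\extn{\sS}$ part of the cone back into $\sZ$ requires both the extended $w$-orthogonality lemma and careful treatment of the extreme index $i = 1$, where minimality of the approximation and Lemma~\ref{lem:torsion-pair}(1) are genuinely used.
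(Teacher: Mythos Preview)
Your proof is correct and follows essentially the same approach as the paper. The only differences are organizational: you establish (4) and (5) first and then invoke them (via the map $z \to 0$) to obtain $\sZ \subseteq \Sigma^{-1}(\extn{\sS}*\sZ)$ in (3), whereas the paper redoes that approximation computation directly for $\Sigma z$; and for the reverse inclusion in (3) you verify membership in ${}\orthw\sS$ by applying $\Hom(-,\Sigma^j\sS)$, while the paper verifies membership in $\sS\orthw$ by applying $\Hom(\Sigma^i\sS,-)$ and then uses the $\SSS_{-w}$-subcategory property and Serre duality for the two top indices---the two routes being equivalent via Lemma~\ref{lem:Sw-subcat}.
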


\begin{proof}
It is clear that Setup~\ref{setup:reduction} satisfies $(1)$ and $(2)$. 

In order to show $(3)$, let 
\[
\sZ_1 \coloneqq {}\orth \sS\orth \cap {}\orth(\Sigma^{-1} \sS) \cap \Sigma^{-1} (\extn{\sS}*\sZ).
\]
We need to show that $\sZ = \sZ_1$. 
Let $z' \in \sZ_1$. Then we have a triangle in $\sD$ of the form $\Sigma^{-1} s \to z' \to \Sigma^{-1} z \to s$, where $s \in \extn{\sS}$ and $z \in \sZ$. By applying $\Hom (\sS, -)$ to this triangle, we get that $\Hom (\Sigma^i \sS, z') = 0$, for $0 \leq i \leq w-2$, since $z \in \sS\orthw$ and $\sS$ is $w$-orthogonal. On the other hand, using the fact that $\sS$ is an $\SSS_{-w}$-subcategory, we have $\Hom (\Sigma^w \sS, z') \simeq \Hom (\SSS^{-1} \sS, z') \simeq D \Hom (z', \sS) = 0$, since $z' \in {}\orth \sS$, and $\Hom (\Sigma^{w-1} \sS, z') \simeq D \Hom (z', \Sigma^{-1} \sS) = 0$, as $z' \in {}\orth(\Sigma^{-1} \sS)$. Therefore, $z' \in \sS\orthw = \sZ$. 

Conversely, let $z \in \sZ$. Since $\extn{\sS}$ is functorially finite, we can consider the triangle $\xymatrix{s \ar[r]^f & \Sigma z \ar[r] & u \ar[r] & \Sigma s}$, where $f$ is a minimal right $\extn{\sS}$-approximation of $\Sigma z$. If we show that $u \in \sZ$, then $z \in \sZ_1$. We have $u \in \sS\orth$ by Lemma~\ref{lem:basic-props}(4), and $u \in (\Sigma^i \sS)\orth$, for $2 \leq i \leq w$, by applying $\Hom (\sS, -)$ to the triangle above and by using the fact that $z \in Z = \sS\orthw$ and $\sS$ is $w$-orthogonal. Finally, $u \in (\Sigma \sS)\orth$ follows from Lemma~\ref{lem:torsion-pair}(1). Hence, $u \in \sS\orthw = \sZ$, and so $\sZ = \sZ_1$. The proof that $\sZ = {}\orth \sS\orth \cap (\Sigma \sS)\orth \cap \Sigma (\sZ*\extn{\sS})$ is similar. This concludes the proof that $(\sZ, \sZ)$ is an $\sS$-mutation pair. 

Finally, we prove $(4)$. Let $f\colon x \to y$ be a map in $\sZ$ and consider the triangles in $\sD$:
\[
\xymatrix{x \ar[r]^f & y \ar[r] & c_f \ar[r] & \Sigma x} \text{ and }  \xymatrix{s_f \ar[r]^{\alpha_f} & c_f \ar[r]^{\beta_f} & z_f \ar[r]^{\gamma_f} & \Sigma s_f}.
\]
We want to show that $c_f \in \extn{\sS}*\sZ$. Given the right-hand triangle, it is enough to show that $z_f \in \sZ$. By Lemma~\ref{lem:basic-props}(4), $z_f \in \sS\orth$. Applying $\Hom (\sS, -)$ to the triangles above, we get $c_f \in (\Sigma^i \sS)\orth$, for $1 \leq i \leq w$, which implies that $z_f \in (\Sigma^j \sS)\orth$, for $2 \leq j \leq w$. Again, by Lemma~\ref{lem:torsion-pair}(1), $\Hom (\sS, \alpha_f)$ is an isomorphism, implying that $z_f \in (\Sigma \sS)\orth$. Therefore $z_f \in \sS\orthw = \sZ$, which finishes the proof of $(4)$. The proof of statement $(5)$ is dual. 
\end{proof}

In light of Lemma~\ref{lem:setups}, $\sZ$ has the structure of a triangulated category given in Theorems~\ref{thm:pretriangulated} and \ref{thm:triangulated}. Since there are two triangulated structures to consider, that in $\sD$ and that in $\sZ$, it is useful to set up some notation for that in $\sZ$ to distinguish between them.

\begin{notation} \label{notation}
Let $\sX$ and $\sY$ be subcategories of $\sZ$. We define 
\[
\sX \star \sY \coloneqq \{ z \in \sZ \mid 
\text{there exists a triangle } x \to z \to y \to x\extn{1} \text{ with } x \in \sX \text{ and } y \in \sY\}.
\]
We denote the extension closure of $\sX$ with respect to the triangulated structure in $\sZ$ by $\extnZ{\sX}$.
The usual notation $\sX * \sY$ and $\extn{\sX}$ keep their usual meanings \emph{in} $\sD$.
\end{notation}

\begin{lemma} \label{lem:R-filtration}
Let $\sS$ be a $w$-orthogonal collection and $\sZ$ be the subcategory of $\sD$ satisfying the hypotheses of Setup~\ref{setup:reduction}. Suppose $\sS \subseteq \sT$ for some $w$-simple-minded system $\sT$ in $\sD$. Write $\sR = \sT \setminus \sS$. Then $\extn{\sT} \cap \sZ = \extnZ{\sR}$.
\end{lemma}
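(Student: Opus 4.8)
The plan is to prove the two inclusions $\extnZ{\sR}\subseteq\extn{\sT}\cap\sZ$ and $\extn{\sT}\cap\sZ\subseteq\extnZ{\sR}$ separately, the first by a closure argument and the second by induction on $\sT$-length.

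For the first inclusion it suffices to show that $\sR\subseteq\extn{\sT}\cap\sZ$ and that $\extn{\sT}\cap\sZ$ is closed under extensions in the triangulated structure of $\sZ$; as $\extn{\sT}$ (by Lemma~\ref{lem:Dugas}) and $\sZ$ are closed under direct summands, this gives the containment. The inclusion $\sR\subseteq\sT\subseteq\extn{\sT}$ is clear. To see $\sR\subseteq\sZ=\sS\orthw$, fix $r\in\sR$ and $s\in\sS$: for $0\le i\le w-1$ we have $\Hom(\Sigma^i s,r)=0$ by the $w$-orthogonality of $\sT$, while for $i=w$ the hypothesis that $\sS$ is an $\SSS_{-w}$-subcategory gives $\Sigma^w\sS=\SSS^{-1}\sS$, so Serre duality rewrites $\Hom(\Sigma^w s,r)$ as $D\Hom(r,s')$ for some $s'\in\sS$, which vanishes since $r\notin\sS$. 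For closure under $\star$: given $x,z_f\in\extn{\sT}\cap\sZ$ and a standard triangle $\trilabelsZ{x}{v}{z_f}{f}{g}{h}$ in $\sZ$, read off from the construction~\eqref{eq:cone} the triangles $\trilabels{x}{v}{c_f}{f}{g_1}{h_1}$ and $\trilabels{s_f}{c_f}{z_f}{\alpha_f}{\beta_f}{\gamma_f}$ in $\sD$ with $s_f\in\extn{\sS}\subseteq\extn{\sT}$. The second gives $c_f\in\extn{\sT}*\extn{\sT}=\extn{\sT}$, and then the first gives $v\in\extn{\sT}*\extn{\sT}=\extn{\sT}$, as required.

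For the second inclusion I would argue by induction on the $\sT$-length $n$ of an object $x\in\extn{\sT}\cap\sZ$, where $\sT$, $\sS$, $\sR$ are viewed as (1-)orthogonal collections so that the composition-series machinery of Lemma~\ref{lem:Dugas} and Definition~\ref{def:length}, together with Theorem~\ref{thm:funct-finite}, applies. If $x=0$ there is nothing to prove, and if $n=1$ then $x\in\sT$, and $\Hom(\sS,x)=0$ (since $x\in\sZ$) forces $x\in\sR$. For $n\ge 2$, fix an $\sT$-composition series of $x$ and let $\iota\colon t_1\to x$ be the composite of its inclusion maps. Then $\iota\ne 0$ (otherwise the cone of $\iota$ in $\sD$ is $x\oplus\Sigma t_1$, making $x$ a summand of an object of $(\sT)_{n-1}$, against minimality of $n$), hence $t_1\in\sR$ (if $t_1\in\sS$ then $\iota$ would witness $\Hom(\sS,x)\ne 0$) and $\iota$ is a morphism of $\sZ$. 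By repeated use of the octahedral axiom in $\sD$, the cone $c_\iota$ of $\iota$ in $\sD$ is $\sT$-filtered by $t_2,\ldots,t_n$, so $c_\iota\in(\sT)_{n-1}\subseteq\extn{\sT}$. Applying Theorem~\ref{thm:funct-finite} to $c_\iota$ and matching the resulting right $\extn{\sS}$-approximation with the minimal one used to build the cone $z_\iota$ of $\iota$ in $\sZ$ (via Lemma~\ref{lem:basic-props} and Lemma~\ref{lem:cone-in-Z}), one obtains $z_\iota\in(\sT)_m\cap\sZ$ with $m\le n-1$; in particular $z_\iota\in\extn{\sT}\cap\sZ$ has $\sT$-length at most $n-1$, so $z_\iota\in\extnZ{\sR}$ by the inductive hypothesis. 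The standard triangle of $\iota$ in $\sZ$, namely $t_1\rightlabel{\iota}x\rightarrow z_\iota\rightarrow t_1\extn{1}$, then exhibits $x\in\extnZ{\sR}\star\extnZ{\sR}=\extnZ{\sR}$.

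The main obstacle is this inductive step, and the crucial decision is to peel off a \emph{sub}object $t_1\hookrightarrow x$ rather than a quotient of $x$: the cone of $t_1\hookrightarrow x$ in $\sD$ lies in $(\sT)_{n-1}\subseteq\extn{\sT}$, which is exactly what permits Theorem~\ref{thm:funct-finite} to control both the $\sT$-length of the $\extn{\sS}$-perpendicular part of that cone and its identification with the cone of $\iota$ in $\sZ$; by contrast the $\sD$-cone of a quotient map is a suspension of an object of $\extn{\sT}$, which in general does not lie in $\extn{\sT}$. A subsidiary point needing care is precisely this matching of the cone $z_\iota$ in $\sZ$ with the Theorem~\ref{thm:funct-finite}-cofiber of $c_\iota$, which relies on uniqueness of minimal right $\extn{\sS}$-approximations and hence on $\extn{\sS}$ being extension-closed.
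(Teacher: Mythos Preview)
Your proof is correct and follows essentially the same strategy as the paper's. The only cosmetic differences are that for the first inclusion you phrase the argument as ``$\extn{\sT}\cap\sZ$ is extension-closed in $\sZ$ and contains $\sR$'' whereas the paper runs an explicit induction on $\sR$-length in $\sZ$ (these are the same argument unpacked differently), and for the second inclusion you are more explicit than the paper about why a nonzero map $r\to x$ with $r\in\sR$ exists, extracting it from a fixed $\sT$-composition series and arguing that the composite inclusion cannot vanish; the paper simply asserts this and then proceeds identically via Lemma~\ref{lem:Dugas}(2) and Theorem~\ref{thm:funct-finite}.
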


\begin{proof}
First note that $w$-orthogonality of $\sT$ gives $\sR \subseteq \sZ = \sS\orthw$.

We now show the inclusion $\extnZ{\sR} \subseteq \extn{\sT} \cap \sZ$. Let $r \in \extnZ{\sR}$. Write $\extnZ{\sR}_n$ for the set of objects of $\extnZ{\sR}$ of $\sR$-length (in $\sZ$) at most $n$; see Definition~\ref{def:length}. We proceed by induction on the $\sR$-length of $r$. If $r$ has  $\sR$-length one, then $r \in \sR \subseteq \sT$ and there is nothing to show. 
Let $r \in \extnZ{\sR}_n$ for $n > 1$. Then there exists a triangle, $r_1 \rightlabel{f} r \too r' \too r_1 \extn{1}$,  in $\sZ$ with $r_1 \in \sR$, $r' \in \extnZ{\sR}_{n-1}$ and $f \neq 0$.
By Definition~\ref{def:cones-in-Z}, such a triangle comes from a diagram of the following form.
\[
\xymatrix@!R=8px{
                  &                       & s_f \ar[d] \ar[r]      & s_{r_1} \ar[d]  \\
r_1 \ar[r]^f & r \ar[r] \ar[dr]  & c_f \ar[r] \ar[d]       & \Sigma r_1 \ar[d]  \\
                 &                        & r' \ar[r] \ar[d]         & r_1 \extn{1} \ar[d]  \\
                 &                         & \Sigma s_f \ar[r]  & \Sigma s_{r_1}
}
\]
By induction $r' \in \extn{\sT} \cap \sZ$ so that $c_f \in \extn{\sT}$. Hence $r \in \extn{\sT} \cap \sZ$, as required.

For the converse, suppose $z \in \extn{\sT} \cap \sZ$. We proceed by induction on the $\sT$-length of $z$. If $z$ has $\sT$-length of one, then $z \in \sT \cap \sZ = \sR$ and the claim holds. Suppose $z \in (\sT)_n \cap \sZ$ for some $n > 1$. Since $z \in \sZ \subseteq \sS\orth$ and $(\sT)_n = \sT * (\sT)_n$, there is an object $r \in \sR = \sT \cap \sZ$ and a nonzero morphism $f \colon r \to z$. Consider the cone of $f$ constructed in Definition~\ref{def:cones-in-Z}.
\[
\xymatrix@!R=8px{
              &                       & s_f \ar[d] \ar[r]      & s_r \ar[d]  \\
r \ar[r]^f  & z \ar[r] \ar[dr]  & c_f \ar[r] \ar[d]     & \Sigma r \ar[d]  \\
              &                        & z_f \ar[r] \ar[d]     & r \extn{1} \ar[d]  \\
              &                        & \Sigma s_f \ar[r]  & \Sigma s_r
}
\]
By Lemma~\ref{lem:Dugas}(2), $c_f \in (\sT)_{n-1}$. Observe now that the left-hand vertical triangle is the right $\extn{\sS}$-approximation triangle occurring in Theorem~\ref{thm:funct-finite}. It follows that $z_f \in (\sT)_m$ for some $m < n$. By construction, $z_f \in \sZ$, so by induction $z_f \in \extnZ{\sR}$. The triangle $r \rightlabel{f} z \too z_f \too r \extn{1}$ in $\sZ$ now shows that $z \in \extnZ{\sR}$, completing the proof.
\end{proof}

We are now ready to prove the main theorem of this section which provides an inductive technique for constructing $w$-simple-minded systems.

\begin{theorem}[Reduction for simple-minded systems] \label{thm:bijectionsms}
Let $\sS$ be a $w$-orthogonal collection and $\sZ$ be the subcategory of $\sD$ satisfying the hypotheses of Setup~\ref{setup:reduction}. Then there is bijection,
\[
\{\text{$w$-simple-minded systems in $\sD$ containing $\sS$}\} \stackrel{1-1}{\longleftrightarrow}
\{\text{$w$-simple-minded systems in $\sZ$}\}.
\]
\end{theorem}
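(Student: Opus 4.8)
\section*{Proof proposal}

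Since a $w$-simple-minded system is a set of (iso-classes of) bricks, the natural candidate maps are $\Phi\colon \sT \mapsto \sT\setminus\sS$ and $\Psi\colon \sR \mapsto \sS\cup\sR$; here $\sR\subseteq\sZ$ for any $w$-orthogonal $\sT\supseteq\sS$ (as in Lemma~\ref{lem:R-filtration}, using that $\sS$ is an $\SSS_{-w}$-subcategory for the degree-$w$ vanishing), and $\sS\cap\sR=\varnothing$ because $\sR\subseteq\sZ\subseteq\sS\orth$ while every object of $\sS$ is a brick. Granting that both maps are well defined, the identities $\Psi\Phi=\mathrm{id}$ and $\Phi\Psi=\mathrm{id}$ are purely formal ($\sS\subseteq\sT$, resp. $\sS\cap\sR=\varnothing$), so the whole theorem reduces to: \textbf{(I)} if $\sT\supseteq\sS$ is a $w$-simple-minded system in $\sD$, then $\sR:=\sT\setminus\sS$ is a $w$-simple-minded system in $\sZ$; and \textbf{(II)} if $\sR$ is a $w$-simple-minded system in $\sZ$, then $\sT:=\sS\cup\sR$ is one in $\sD$. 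Throughout one exploits that $\sZ$ carries the triangulated structure of Theorems~\ref{thm:pretriangulated} and \ref{thm:triangulated} (via Lemma~\ref{lem:setups}) while, as an ordinary category, $\sZ$ is a full subcategory of $\sD$, so $\Hom_\sZ=\Hom_\sD\vert_\sZ$.

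The first ingredient, which settles the orthogonality axioms in both directions, is a \emph{Hom-compatibility lemma}: for $x,y\in\sZ$ and $0\le k\le w$ there are isomorphisms $\Hom_\sD(x\extn{k},y)\simeq\Hom_\sD(\Sigma^k x,y)$, and dually $\Hom_\sD(x,y\extn{-k})\simeq\Hom_\sD(x,\Sigma^{-k}y)$. This follows by unwinding Definition~\ref{def:shift-in-Z}: $x\extn{1}=Gx$ fits in a triangle $s_x\to\Sigma x\to Gx\to\Sigma s_x$ with $s_x\in\extn{\sS}$, and since $\sZ=\sS\orthw$ forces $\Hom_\sD(\Sigma^i\extn{\sS},\sZ)=0$ for $0\le i\le w$, applying $\Hom_\sD(-,y)$ to the $\Sigma^j$-shift of this triangle and iterating yields $\Hom_\sD(\Sigma^jG^{k-j}x,y)\simeq\Hom_\sD(\Sigma^{j+1}G^{k-j-1}x,y)$ for $0\le j\le k-1\le w-1$. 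Given this, in direction (I) we obtain $\Hom_\sZ(x\extn{k},y)\simeq\Hom_\sD(\Sigma^k x,y)$, which is $\delta_{xy}\kk$ for $k=0$ and $0$ for $1\le k\le w-1$ by $w$-orthogonality of $\sT$; in direction (II) the ``mixed'' groups $\Hom_\sD(\Sigma^k\sS,\sR)$ and $\Hom_\sD(\Sigma^k\sR,\sS)$ vanish in the required ranges because $\sR\subseteq\sZ=\sS\orthw={}\orthw\sS$ (Lemma~\ref{lem:Sw-subcat}), the $\sS$-with-$\sS$ part is $w$-orthogonality of $\sS$, and the $\sR$-with-$\sR$ part is $w$-orthogonality of $\sR$ in $\sZ$ transported back through the lemma.

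The second, substantially harder ingredient is the generation axiom: $\sZ=\add\,\extnZ{\sR,\sR\extn{-1},\dots,\sR\extn{1-w}}$ in (I), and $\sD=\add\,\extn{\sT,\Sigma^{-1}\sT,\dots,\Sigma^{1-w}\sT}$ in (II). For (I), since $\sT$ is a $w$-simple-minded system one has $\sD=\extn{\sT}*\Sigma^{-1}\extn{\sT}*\cdots*\Sigma^{1-w}\extn{\sT}$ by the lemma preceding Corollary~\ref{cor:smsimpliesff}; the plan is to establish the refinement of Lemma~\ref{lem:R-filtration}
\[
\sZ\cap\bigl(\extn{\sT}*\Sigma^{-1}\extn{\sT}*\cdots*\Sigma^{-k}\extn{\sT}\bigr)=\extnZ{\sR}\star\extnZ{\sR}\extn{-1}\star\cdots\star\extnZ{\sR}\extn{-k}
\]
for $0\le k\le w-1$ by induction on $k$, the case $k=0$ being exactly Lemma~\ref{lem:R-filtration}. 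The inductive step peels off the last factor $\Sigma^{-k}\extn{\sT}$ using a minimal right $\extn{\sS}$-approximation together with Theorem~\ref{thm:funct-finite} and its dual to keep all intermediate objects inside $\sZ$, and then uses the explicit form of triangles in $\sZ$ from Definition~\ref{def:cones-in-Z} and the mutation machinery (Lemmas~\ref{lem:mutation} and \ref{lem:approxmutationtriangle}) to recognise the resulting $\sZ$-filtration; taking $k=w-1$ and using that $\sR$ is $w$-orthogonal in $\sZ$ (so the lemma preceding Corollary~\ref{cor:smsimpliesff} applies inside $\sZ$) gives the claim. For (II) one instead produces, for $d\in\sD$, a filtration with layers in $\Sigma^{-i}\extn{\sT}$: using Lemma~\ref{lem:funct-finite} one splits off $\extn{\sS}$-filtered layers, and using that $\extnZ{\sR}$ is functorially finite in $\sZ$ (Corollary~\ref{cor:smsimpliesff} in $\sZ$) together with the $\star$-filtration $\sZ=\extnZ{\sR}\star\cdots\star\extnZ{\sR}\extn{1-w}$ and the converse of the refined identity above, one translates the $\sZ$-part back into a $\Sigma^{-i}\extn{\sT}$-filtration in $\sD$; alternatively one may verify via Proposition~\ref{prop:Riedtmann} that $\sT$ is a right $w$-Riedtmann configuration in $\sD$ with $\extn{\sT}$ covariantly finite, deducing the Riedtmann condition from the Hom-compatibility lemma and $\sZ={}\orthw\sS$.

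The main obstacle is exactly this generation condition --- concretely the refined version of Lemma~\ref{lem:R-filtration} and the transport of extension-closure filtrations between $(\sD,\Sigma)$ and $(\sZ,\extn{1})$. Because the new shift $\extn{1}$ agrees with $\Sigma$ only up to $\extn{\sS}$-approximation, every time one moves a layer across the equivalence of Lemma~\ref{lem:mutation} an $\extn{\sS}$-error term appears that must be absorbed, and controlling these errors forces a careful interplay of the octahedral axiom in $\sD$, the perpendicularity properties defining $\sZ$, and the minimality of approximations --- much as in the proof of the octahedral axiom for $\sZ$ in Section~\ref{sec:triangulated}. Once the Hom-compatibility lemma and this filtration transport are in hand, the remaining verifications (orthogonality in both directions and the mutual-inverse identities) are routine.
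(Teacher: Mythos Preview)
Your framework is sound: the maps $\Phi,\Psi$ are exactly the ones the paper uses, and your Hom-compatibility lemma $\Hom_\sD(x\extn{k},y)\simeq\Hom_\sD(\Sigma^k x,y)$ for $0\le k\le w-1$ is precisely the dimension-shifting argument the paper runs to verify $w$-orthogonality in both directions (and, in direction (I), the right $w$-Riedtmann condition for $\sR$ in $\sZ$).

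The divergence is in how the generation axiom is handled in direction (I). The paper does \emph{not} attempt your refined identity $\sZ\cap(\extn{\sT}*\cdots*\Sigma^{-k}\extn{\sT})=\extnZ{\sR}\star\cdots\star\extnZ{\sR}\extn{-k}$. Instead it invokes Proposition~\ref{prop:Riedtmann}: having already shown $\sR$ is right $w$-Riedtmann in $\sZ$, it proves $\extnZ{\sR}$ is covariantly finite in $\sZ$ by taking a left $\extn{\sT}$-approximation triangle $x\to z\to t_z\to\Sigma x$ of $z\in\sZ$ in $\sD$, checking (via Lemma~\ref{lem:torsion-pair} and $w$-orthogonality) that the cocone $x$ lies in $\sZ$, and then converting this into a $\sZ$-triangle whose third term lands in $\extn{\sT}\cap\sZ=\extnZ{\sR}$ by Theorem~\ref{thm:funct-finite} and Lemma~\ref{lem:R-filtration}. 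Generation is then free. For direction (II) the paper proceeds directly, essentially as you outline: it uses the torsion pair $(\sZ,\extn{\sS}*\cdots*\extn{\Sigma^{-w}\sS})$ from Lemma~\ref{lem:funct-finite} and proves only the inclusion $\extnZ{\sR}\star\cdots\star\extnZ{\sR}\extn{-i}\subseteq\extn{\sT}*\cdots*\extn{\Sigma^{-i}\sT}$ via two short inductive claims.

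Your sketch for the forward inclusion in the refined identity (direction (I)) is the weak point. Given $z\in\sZ$ with a $\sD$-filtration by $\Sigma^{-i}\extn{\sT}$-layers, the intermediate objects need not lie in $\sZ$; for instance, splitting off the last layer $\Sigma^{-k}t$ yields a cocone $a$ for which $\Hom(\sS,a)$ is controlled by $\Hom(\Sigma^{k+1}\sS,t)$, and at $k=w-1$ this is $D\Hom(t,\sS)$, which is generally nonzero. Your phrase ``peel off the last factor using a minimal right $\extn{\sS}$-approximation and Theorem~\ref{thm:funct-finite}'' does not explain how to simultaneously force the truncated object into $\sZ$ and identify the discarded piece with something in $\extnZ{\sR}\extn{-k}$; this is exactly the error-absorption you flag, but no mechanism is given. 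The Riedtmann route avoids all of this, and since you already cite Proposition~\ref{prop:Riedtmann} as an alternative for (II), I would use it for (I) instead---where it is genuinely easier---and keep the direct filtration argument for (II), as the paper does.
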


\begin{proof}
Let $\sT$ be a $w$-simple-minded system in $\sD$ such that $\sS \subseteq \sT$. We will show that $\sR \coloneqq \sT \setminus \sS$ is a $w$-simple-minded system in $\sZ$.
Recall from Lemma~\ref{lem:R-filtration} that $\sR \subseteq \sZ = \sS\orthw$. 

We first show that $\sR$ is $w$-orthogonal in $\sZ$. Let $r_1, r_2 \in \sR$. Clearly, $\Hom_\sD (r_1, r_2) = \delta_{r_1, r_2} \kk$, so it remains to show that, if $w \geq 2$, then $\Hom_{\sD} (r_1 \extn{k}, r_2) = 0$, for $1 \leq k \leq w-1$. By Definition~\ref{def:cones-in-Z}, for $1 \leq k \leq w-1$, we have triangles in $\sD$ of the form,
\[
\tri{s_{r_1\extn{k-1}}}{\Sigma r_1 \extn{k-1}}{r_1 \extn{k}}.
\]
Since $\sR \subseteq \sS\orthw$, applying the functor $\Hom_{\sD} (-,r_2)$ to the triangles above gives
\[
\Hom_\sD (r_1 \extn{k}, r_2) \simeq \Hom_\sD (\Sigma r_1 \extn{k-1}, r_2) \simeq \Hom_\sD (\Sigma^2 r_1 \extn{k-2}, r_2) \simeq \cdots \simeq \Hom_\sD (\Sigma^k r_1, r_2),
\]
for each $1 \leq k \leq w-1$. Since $\sT$ is $w$-orthogonal in $\sD$, we have $\Hom_\sD (\Sigma^k r_1, r_2) = 0$, for $1 \leq k \leq w-1$, and the $w$-orthogonality of $\sR$ in $\sZ$ follows. 

Next, we will prove that $\sR$ is a right $w$-Riedtmann configuration in $\sZ$. Suppose $z \in \sZ$ is such that $\Hom_\sD (z \extn{k}, \sR) = 0$, for $0 \leq k \leq w-1$. Following the dimension shifting argument as above, we have $\Hom_\sD (\Sigma^k z, \sR) \simeq \Hom_\sD (z \extn{k}, \sR) = 0$. On the other hand, we have $\Hom_\sD (\Sigma^k z, \sS) = 0$, for $0 \leq k \leq w-1$, by definition of $\sZ$. Hence, $\Hom_\sD (\Sigma^k z, \sT) = 0$, for $0 \leq k \leq w-1$ because $\sT = \sR \cup \sS$. 
By Proposition~\ref{prop:Riedtmann}, $\sT$ is a right $w$-Riedtmann configuration. It then follows that $z = 0$, showing that $\sR$ is a right $w$-Riedtmann configuration in $\sZ$. 

Finally, we will show that $\extnZ{\sR}$ is covariantly finite in $\sZ$. Let $z \in \sZ$. Since $\sT$ is a $w$-simple-minded system, by Proposition~\ref{prop:Riedtmann}, $\extn{\sT}$ is covariantly finite in $\sD$. Take a left $\extn{\sT}$-approximation triangle for $z$ in $\sD$:
\[
\trilabels{x}{z}{t_z}{f}{g_1}{h_1},
\]
and note that $x \in {}\orth \sT$.
We first claim that $x \in \sZ$. Applying the functor $\Hom(-,\sS)$ to the left $\extn{\sT}$-approximation triangle above yields a long exact sequence for $0 \leq i \leq w-1$,
\[
(z, \Sigma^{-i-1} \sS) \too (x, \Sigma^{-i-1} \sS) \too (t_z, \Sigma^{-i} \sS) \too (z, \Sigma^{-i} \sS).
\]
For $0 \leq i \leq w$, $\Hom(z, \Sigma^{-i} \sS) = 0$ because $z \in \sZ = {}\orthw \sS$. Thus the left-hand and right-hand terms vanish for $0 \leq i \leq w-1$.
Since $t_z \in \extn{\sT}$, we have $\Hom(t_z, \Sigma^{-i} \sS) = 0$ for $1 \leq i \leq w-1$ by $w$-orthogonality of $\sT$.
Therefore, $\Hom(x, \Sigma^{-i} \sS) = 0$ for $i = 0$ since $x\in {}\orth \sT \subseteq {}\orth \sS$ and $2 \leq i \leq w$.
To see that $\Hom(x, \Sigma^{-1} \sS) = 0$, apply Lemma~\ref{lem:torsion-pair}(1) to see that $\Hom(t_z,\sT) \too \Hom(z,\sT)$ is an isomorphism and observe that $\Hom(z,\sS) = 0$ because $z \in \sZ$. This shows that $x \in \sZ$.

Since $f\colon x \to z$ is a morphism in $\sZ$, we use Definition~\ref{def:cones-in-Z} to construct the cone in $\sZ$.
\[
\xymatrix@!R=8px{
              &                                            & s_f \ar[d] \ar[r]              & s_x \ar[d]  \\
x \ar[r]^f  & z \ar[r]^-{g_1} \ar[dr]_-{g}   & t_z \ar[r]^-{h_1} \ar[d]  & \Sigma x \ar[d]  \\
              &                                            & z_f \ar[r]^-{h} \ar[d]       & x \extn{1} \ar[d]  \\
              &                                            & \Sigma s_f \ar[r]           & \Sigma s_x
}
\]
This produces a triangle $x \rightlabel{f} z \rightlabel{g} z_f \rightlabel{h} x \extn{1}$ in $\sZ$.
Since $t_z \in \extn{\sT}$, one observes that the left-hand vertical triangle in the diagram above is the approximation triangle from Theorem~\ref{thm:funct-finite}. It follows that $z_f \in \extn{\sT} \cap \sZ$. By Lemma~\ref{lem:R-filtration}, we therefore have $z_f \in \extnZ{\sR}$.
To see that $g \colon z \to z_f$ is a left $\extnZ{\sR}$-approximation in $\sZ$, we need to see that $x \in {}\orth \sR$. However, since $\sR \subseteq \sT$ and $x \in {}\orth \sT$, this is clear.

In conclusion, we have shown that $\sR$ is a right $w$-Riedtmann configuration in $\sZ$ such that $\extnZ{\sR}$ is covariantly finite. By Proposition~\ref{prop:Riedtmann}, $\sR$ is a $w$-simple-minded system in $\sZ$.

Conversely, let $\sR$ be a $w$-simple-minded system in $\sZ$. We will show that $\sT \coloneqq \sR \cup \sS$ is a $w$-simple-minded system in $\sD$. The fact that $\sT$ is $w$-orthogonal in $\sD$ follows from $w$-orthogonality of $\sS$ in $\sD$, $\sR \subseteq \sZ = {}\orthw \sS = \sS\orthw$, and $\Hom_\sD (\Sigma^k r, \sR) \simeq \Hom_\sD (r \extn{k}, \sR)$, for all $r \in \sR$ and $0 \leq k \leq w-1$, as seen in the dimension shifting argument in the first implication. 

We now show that $\sD = \extn{\sT} * \extn{\Sigma^{-1} \sT} * \cdots * \extn{\Sigma^{1-w} \sT}$.
By Lemma~\ref{lem:funct-finite}, the subcategory $\sX \coloneqq \extn{\sS} * \cdots * \extn{\Sigma^{-w} \sS}$ is functorially finite in $\sD$. Therefore, there is a torsion pair $({}\orth \sX,\sX)$ in $\sD$. By definition of $\sZ$, one sees that $\sZ = {}\orth\sX$.
For $d \in \sD$, let 
\begin{equation} \label{decomposition}
\tri{z}{d}{x}
\end{equation}
be a decomposition triangle with respect to this torsion pair. 
Since $\sR$ is a $w$-simple-minded system in $\sZ$, we have $\sZ = \extnZ{\sR} \star \extnZ{\sR\extn{-1}} \star \cdots \star \extnZ{\sR\extn{1-w}}$ (recall Notation~\ref{notation}).

\begin{claim} \label{claimA}
For $1 \leq i \leq w-1$, we have  $\extnZ{\sR\extn{-i}} \subseteq \extn{\Sigma^{-1} \sT} * \cdots * \extn{\Sigma^{-i} \sT}$.
\end{claim} 

\begin{proof}[Proof of claim] \renewcommand{\qedsymbol}{} %removes QED symbol locally
For $i = 1$, let $z \in \extnZ{\sR}$. By Definition~\ref{def:shift-in-Z}, we have a triangle
\[
\Sigma^{-1} s^z \to z \extn{-1} \to \Sigma^{-1} z \to s^z,
\]
from which it follows that $z \extn{-1} \in \extn{\Sigma^{-1} \sS} * \Sigma^{-1} \extnZ{\sR} \subseteq \extn{\Sigma^{-1} \sS} * \Sigma^{-1} \extn{\sT} \subseteq \extn{\Sigma^{-1} \sT}$, where the first inclusion is by Lemma~\ref{lem:R-filtration}.

Now suppose $i > 1$, and assume the claim holds for $i-1$. Let $z \in \extnZ{\sR}$. By induction, $z\extn{-i+1} \in \extnZ{\sR\extn{-i+1}} \subseteq \extn{\Sigma^{-1} \sT} * \cdots * \extn{\Sigma^{-i+1} \sT}$. Again considering the triangle from Definition~\ref{def:shift-in-Z},
\[
\Sigma^{-1} s^{z\extn{-i+1}} \to z \extn{-i} \to \Sigma^{-1} z \extn{-i+1} \to s^{z\extn{-i+1}},
\]
shows that $z \extn{-i} \in \extn{\Sigma^{-1} \sS} * \Sigma^{-1} \extnZ{\sR\extn{-i+1}} \subseteq \extn{\Sigma^{-1} \sS} * \Sigma^{-1}(\extn{\Sigma^{-1} \sT} * \cdots * \extn{\Sigma^{-i+1} \sT}) \subseteq \extn{\Sigma^{-1} \sT} * \cdots * \extn{\Sigma^{-i} \sT}$, establishing the claim.
\end{proof}

\begin{claim} \label{claimB}
For $0 \leq i \leq w-1$, we have $\extnZ{\sR} \star \cdots \star \extnZ{\sR\extn{-i}} \subseteq \extn{\sT}* \cdots * \extn{\Sigma^{-i} \sT}$.
\end{claim}

\begin{proof}[Proof of claim] \renewcommand{\qedsymbol}{} %removes QED symbol locally
The case $i = 0$ is Lemma~\ref{lem:R-filtration}. Suppose $i \geq 1$ and the claim holds for $i-1$. 
Let $z \in \extnZ{\sR} \star \extnZ{\sR\extn{-1}} \star \cdots \star \extnZ{\sR\extn{-i}}$. Therefore, there is a triangle
\[
x \to z \to r\extn{-i} \to x\extn{1}
\]
with $x \in \extnZ{\sR} \star \extnZ{\sR\extn{-1}} \star \cdots \star \extnZ{\sR\extn{-i+1}}$ and $r \extn{-i} \in \extnZ{\sR\extn{-i}}$.
By induction $x \in \extn{\sT} * \cdots * \extn{\Sigma^{-i+1} \sT}$ and by Claim~\ref{claimA}, $r \extn{-i} \in \extn{\Sigma^{-1} \sT} * \cdots * \extn{\Sigma^{-i} \sT}$.
By Lemma~\ref{lem:reverseorderextension}, it follows that $z \in \extn{\sT} * \cdots * \extn{\Sigma^{-i} \sT}$, giving the claim.
\end{proof}

Returning to the decomposition triangle \eqref{decomposition}, we see that by Claim~\ref{claimB} the object $z \in \extn{\sT} * \extn{\Sigma^{-1} \sT} * \cdots * \extn{\Sigma^{1-w} \sT}$. Now applying Lemma~\ref{lem:reverseorderextension} we obtain that $d \in \extn{\sT} * \extn{\Sigma^{-1} \sT} * \cdots * \extn{\Sigma^{1-w} \sT}$. Hence $\sT$ is a $w$-simple-minded system in $\sD$.

Clearly the maps $\sT \mapsto \sT \setminus \sS$ and $\sR \mapsto \sR \cup \sS$ are mutually inverse bijections, finishing the proof.   
\end{proof}

Finally, we verify that the property of satisfying Serre duality is preserved by simple-minded reduction. In particular, this means that the type of the category is preserved under our reduction procedure.

\begin{theorem} \label{thm:Serre}
Let $\sS$ be a $w$-orthogonal collection and $\sZ$ be a subcategory of $\sD$ satisfying the hypotheses of Setup~\ref{setup:reduction}.
Suppose $\sD$ satisfies Serre duality with Serre functor $\SSS \colon \sD \to \sD$. Then $\sZ$ is a triangulated category with Serre functor $\overline{\SSS} := \SSS \Sigma^{w} \extn{-w}$.
In particular, if $\sD$ is $(-w)$-Calabi-Yau, so is $\sZ$.  
\end{theorem}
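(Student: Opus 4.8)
The plan is to recall that $\sZ$ is already known to be triangulated (Theorems~\ref{thm:pretriangulated} and~\ref{thm:triangulated}, whose hypotheses hold by Lemma~\ref{lem:setups}), with shift $\extn{1}$ and inverse shift $\extn{-1}$, and then to exhibit $\overline{\SSS} = \SSS\Sigma^{w}\extn{-w}$ as a Serre functor on it. First I would check that $\overline{\SSS}$ is an autoequivalence of $\sZ$: the functor $\SSS_{-w} = \SSS\Sigma^{w}$ is an autoequivalence of $\sD$ which restricts to an autoequivalence of $\sZ$, because by Lemma~\ref{lem:Sw-subcat} the subcategory $\sZ$ is an $\SSS_{-w}$-subcategory, and $\extn{-w}$ is the $(-w)$th power of the shift of $\sZ$. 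Since $\sZ$ is a full subcategory of $\sD$, we have $\Hom_{\sZ}(a,b) = \Hom_{\sD}(a,b)$ for all $a,b \in \sZ$; combining this with Serre duality $\Hom_{\sD}(x,y) \cong D\Hom_{\sD}(y,\SSS x)$ in $\sD$, it is enough to construct a natural isomorphism $\Hom_{\sD}(y,\overline{\SSS}x) \cong \Hom_{\sD}(y,\SSS x)$ for $x,y \in \sZ$.

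The key is a natural transformation $\theta \colon \extn{-w} \Rightarrow \Sigma^{-w}$ of functors $\sZ \to \sD$ with a cone that is invisible to $\Hom_{\sD}(y,-)$ for $y\in\sZ$. For $w = 1$, the triangle $\Sigma^{-1} s^{z} \too z\extn{-1} \rightlabel{g_{z}} \Sigma^{-1}z \too s^{z}$ defining the inverse shift (Definition~\ref{def:shift-in-Z}, via Lemma~\ref{lem:mutation}) gives for each $z\in\sZ$ a morphism $g_{z}\colon z\extn{-1}\to\Sigma^{-1}z$ whose cone is isomorphic to $s^{z}\in\extn{\sS}$; the family $\{g_{z}\}$ is natural in $z$ precisely because the construction of $\extn{-1}$ on morphisms forces the relevant squares with the fixed triangles to commute (uniqueness being Lemma~\ref{lem:zeromaptoZ}). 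For general $w$, I would take $\theta_{z}$ to be the composite
\[
z\extn{-w} \rightlabel{g_{z\extn{-(w-1)}}} \Sigma^{-1}\bigl(z\extn{-(w-1)}\bigr) \rightlabel{\Sigma^{-1}g_{z\extn{-(w-2)}}} \cdots \too \Sigma^{-w}z,
\]
and apply the octahedral axiom in $\sD$ repeatedly to its consecutive factors. Since the cone of the $k$th factor lies in $\Sigma^{-k}\extn{\sS}$ (for $0\leq k\leq w-1$), this shows that the cone $D_{z}$ of $\theta_{z}$ lies in $\extn{\sS}*\Sigma^{-1}\extn{\sS}*\cdots*\Sigma^{-(w-1)}\extn{\sS}$. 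I expect this iterated octahedral bookkeeping to be the main technical obstacle.

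Granting $\theta$, I would apply the triangle functor $\SSS\Sigma^{w}$ to the triangle $x\extn{-w}\rightlabel{\theta_{x}}\Sigma^{-w}x\too D_{x}\too\Sigma x\extn{-w}$ to obtain a triangle $\overline{\SSS}x \rightlabel{\SSS\Sigma^{w}\theta_{x}} \SSS x \too \SSS\Sigma^{w}D_{x} \too \Sigma\overline{\SSS}x$, which is natural in $x$. Because $\SSS\Sigma^{w}$ is a triangle autoequivalence and $\SSS\Sigma^{w}\sS = \sS$ (as $\sS$ is an $\SSS_{-w}$-subcategory), we get $\SSS\Sigma^{w}\extn{\sS}=\extn{\sS}$, hence $\SSS\Sigma^{w}D_{x}\in\extn{\sS}*\Sigma^{-1}\extn{\sS}*\cdots*\Sigma^{-(w-1)}\extn{\sS}$. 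For $y\in\sZ$ we have, by Lemma~\ref{lem:Sw-subcat}, $\Hom_{\sD}(y,\Sigma^{-j}\sS)=0$ for $0\leq j\leq w$; as the objects $c$ with $\Hom_{\sD}(y,\Sigma^{-j}c)=0$ form an extension-closed subcategory containing $\sS$, this gives $\Hom_{\sD}(y,\Sigma^{-j}\extn{\sS})=0$ for $0\leq j\leq w$. Therefore $\Hom_{\sD}(y,\SSS\Sigma^{w}D_{x})=0$ and $\Hom_{\sD}(y,\Sigma^{-1}\SSS\Sigma^{w}D_{x})=0$, so the long exact sequence obtained by applying $\Hom_{\sD}(y,-)$ yields $\Hom_{\sD}(y,\overline{\SSS}x)\xrightarrow{\ \sim\ }\Hom_{\sD}(y,\SSS x)$, natural in $x$ (since $\theta$ is) and in $y$. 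Pasting this against Serre duality in $\sD$ produces the required natural isomorphism $\Hom_{\sZ}(x,y)\cong D\Hom_{\sZ}(y,\overline{\SSS}x)$, so $\overline{\SSS}$ is a Serre functor on $\sZ$.

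Finally, for the Calabi-Yau assertion: if $\sD$ is $(-w)$-Calabi-Yau then $\SSS\cong\Sigma^{-w}$, whence $\overline{\SSS}=\SSS\Sigma^{w}\extn{-w}\cong\Sigma^{-w}\Sigma^{w}\extn{-w}=\extn{-w}$; that is, the Serre functor of $\sZ$ is the $(-w)$th power of its shift, so $\sZ$ is $(-w)$-Calabi-Yau. The only step requiring real work is the construction of $\theta$ and the control of its cone; the remainder is dévissage together with the identifications $\Hom_{\sZ}=\Hom_{\sD}$ on $\sZ$ and $\SSS\Sigma^{w}\extn{\sS}=\extn{\sS}$.
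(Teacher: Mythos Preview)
Your proposal is correct and follows essentially the same approach as the paper. The only differences are cosmetic: the paper does the dimension-shifting one triangle at a time---applying $\Hom_\sD(y,-)$ to $\SSS\Sigma^{i-1}$ of the triangle $s_i \to \Sigma x\extn{-i} \to x\extn{-i+1} \to \Sigma s_i$ and reading off $\Hom(y,\SSS\Sigma^{i}x\extn{-i}) \simeq \Hom(y,\SSS\Sigma^{i-1}x\extn{-i+1})$ for $1\leq i\leq w$ from the vanishing of the $s_i$-terms---rather than first assembling your composite $\theta$ and controlling its cone via iterated octahedra, so the bookkeeping you flag as the main obstacle simply does not arise. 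The paper also handles the autoequivalence of $\overline{\SSS}$ by checking separately that $\overline{\SSS}^{-1} = \Sigma^{-w}\SSS^{-1}\extn{w}$ is a left Serre functor and invoking \cite[Lemma~I.1.5]{RvdB}, whereas you argue directly that $\SSS_{-w}$ restricts to $\sZ$ (Lemma~\ref{lem:Sw-subcat}) and compose with $\extn{-w}$; both routes are fine.
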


\begin{proof}
The subcategory $\sZ$ is triangulated by Theorems~\ref{thm:pretriangulated} and \ref{thm:triangulated}.
By Lemma~\ref{lem:Sw-subcat}, we have $\SSS \sZ = \Sigma^{-w} \sZ$. This implies that $\overline{\SSS} z \in \sZ$ for each $z \in \sZ$.

We shall show that $\overline{\SSS} = \Sigma^w \SSS \extn{-w} \colon \sZ \to \sZ$ is a right Serre functor. That is, for $x, y \in \sZ$ we need to check that there is a natural isomorphism 
$\Hom(x,y) \simeq D \Hom(y, \overline{\SSS} x)$.
Applying the construction of the shift functor $\extn{1}$ in $\sZ$ from Definition~\ref{def:shift-in-Z} iteratively yields the following triangles in $\sD$:
\[
\tri{s_i}{\Sigma x \extn{-i}}{x \extn{-i+1}},
\]
to which we apply the functor $\SSS \Sigma^{i-1}$ to give:
\[
\SSS \Sigma^{i-1} s_i  \to \SSS \Sigma^{i} x \extn{-i} \to \SSS \Sigma^{i -1} x\extn{-i+1} \rightarrow \SSS \Sigma^{i} s_i.
\]
We now apply the functor $\Hom (y, -)$ to these triangles to give the long exact sequences.
\[
\cdots \to (y, \SSS \Sigma^{i-1} s_i) \to (y, \SSS \Sigma^i x \extn{-i}) \to (y, \SSS \Sigma^{i-1} x \extn{-i+1}) \to (y, \SSS \Sigma^i s_i) \to \cdots
\]
Using the fact that $\sZ = {}\orthw \sS = \sS\orthw$ and Serre duality we have 
$\Hom (y, \SSS \Sigma^{i-1} s_i) \simeq D \Hom (\Sigma^{i-1} s_i, y) = 0$ and $\Hom (y, \SSS \Sigma^{i} s_i) \simeq D \Hom (\Sigma^i s_i, y) = 0$, for each $1 \leq i \leq w$, so that
\[
\Hom(y, \SSS \Sigma^i x \extn{-i}) \simeq \Hom(y,\SSS \Sigma^{i-1} x \extn{-i+1}) \text{ for } 1 \leq i \leq w.
\]
Putting these together, we obtain the desired isomorphism:
\[
D \Hom(y, \overline{\SSS} x ) \simeq D \Hom (y, \SSS \Sigma^w x \extn{-w})  \simeq D \Hom (y, \SSS x) \simeq  \Hom (x,y).
\]
A similar argument shows that $\overline{\SSS}^{-1} \coloneqq \Sigma^{-w} \SSS^{-1} \extn{w}$ is a left Serre functor for $\sZ$. By \cite[Lemma I.1.5]{RvdB}, $\overline{\SSS} \colon \sZ \to \sZ$ is a Serre functor. 
For the final statement, observe that if $\sD$ is $(-w)$-Calabi-Yau then $\Sigma^w \SSS \simeq \id_\sD$ so that the final isomorphism in $\sZ$ becomes
$D \Hom(y, x \extn{-w}) \simeq \Hom(y,x)$ and $\sZ$ is also $(-w)$-Calabi-Yau.
\end{proof}

%=============================================================================
% SECTION
\section{Examples} \label{sec:examples}
%=============================================================================

In this section we briefly discuss two simple examples. The first example illustrates the construction of Theorem~\ref{thm:triangulated} in an orbit category of the derived category of the path algebra of a Dynkin type $A$ quiver. The second example explains in a similar context why the Iyama-Yoshino subfactor construction from \cite{IY} does not work in our context.

\begin{example}
Let $\sD = \Db(\kk A_5)/\Sigma^3\tau$, where $A_5$ is the linearly oriented Dynkin quiver of type $A_5$ and $\tau$ is the Auslander--Reiten translate in $\Db(\kk A_5)$. The Auslander--Reiten quiver of $\sD$ is indicated in Figure~\ref{fig:example}.
Let $\sS = \{s_1,s_2\}$ be as indicated in Figure~\ref{fig:example}, since we are in finite type, $\extn{\sS}$ is clearly functorially finite. Let $\sZ = {}^{\perp_2} \sS^{\perp_2}$. By Theorem~\ref{thm:triangulated}, $\sZ$ is a triangulated category; moreover, we have 
\[
\sZ \simeq \Db(\kk A_2)/\Sigma^3\tau \oplus \Db(\kk A_1)/\Sigma^3\tau .
\]
The component $\Db(\kk A_2)/\Sigma^3\tau$ is indicated in blue in Figure~\ref{fig:example} and $\Db(\kk A_1)/\Sigma^3\tau$ in green.

Let $x$ and $y$ be the indecomposable objects indicated in Figure~\ref{fig:example}. Up to scalars there is one nonzero map $f \colon x \to y$. The cone of this map, $c_f \notin \sZ$ is indicated. Our construction gives the cone $z_f \in \sZ$. We see also that $\Sigma x \notin \sZ$, but $x\extn{1} \in \sZ$, giving us the desired triangle $x \rightlabel{f} y \too z_f \too x \extn{1}$ in $\sZ$.

Now observe that $\sR = \{y, x \extn{1}, t\}$ as a $2$-simple-minded system in $\sZ$. By Theorem~\ref{thm:bijectionsms}, $\sS \cup \sR = \{s_1, s_2, y, x \extn{1}, t\}$ is a $2$-simple-minded system in $\sD$.

\begin{figure}
\begin{tikzpicture}[scale=0.8]
%\draw[help lines,step=0.5cm] (-1,-1) grid (16,5);

\draw[gray,rounded corners] (3,4.75) -- (0.25,-0.75) -- (10.75,-0.75) -- (8, 4.75) -- cycle;

\fill[red!10!white] (1,4) circle (2.5mm);
\draw[red,thick] (1,4) circle (2.5mm);

\fill[red!50!white,rounded corners] (1.7, 1.9) -- (2.1,1.7) -- (2.8, 3.1) -- (2.4, 3.3) -- cycle;
\draw[red,rounded corners,thick] (1.7, 1.9) -- (2.1,1.7) -- (2.8, 3.1) -- (2.4, 3.3) -- cycle;
\fill[red!50!white] (4,0) circle (2.5mm);
\draw[red,thick] (4,0) circle (2.5mm);

\fill[red!30!white,rounded corners] (4.7, 2.1) -- (5.1,2.3) -- (5.8, 0.9) -- (5.4, 0.7) -- cycle;
\draw[red,rounded corners,thick] (4.7, 2.1) -- (5.1,2.3) -- (5.8, 0.9) -- (5.4, 0.7) -- cycle;
\fill[red!30!white] (7,4) circle (2.5mm);
\draw[red,thick] (7,4) circle (2.5mm);

\fill[red!10!white,rounded corners] (6.7, 1.9) -- (7.1,1.7) -- (7.8, 3.1) -- (7.4, 3.3) -- cycle;
\draw[red,rounded corners,thick] (6.7, 1.9) -- (7.1,1.7) -- (7.8, 3.1) -- (7.4, 3.3) -- cycle;
\fill[red!10!white] (9,0) circle (2.5mm);
\draw[red,thick] (9,0) circle (2.5mm);

\fill[red!50!white,rounded corners] (9.7, 2.1) -- (10.1,2.3) -- (10.8, 0.9) -- (10.4, 0.7) -- cycle;
\draw[red,rounded corners,thick] (9.7, 2.1) -- (10.1,2.3) -- (10.8, 0.9) -- (10.4, 0.7) -- cycle;
\fill[red!50!white] (12,4) circle (2.5mm);
\draw[red,thick] (12,4) circle (2.5mm);

\fill[blue!30!white] (2,0) circle (2.5mm);
\draw[blue,thick] (2,0) circle (2.5mm);

\fill[blue!30!white,rounded corners] (4.5,2.6) -- (3.6,4.25) -- (5.4,4.25) -- cycle;
\draw[blue,rounded corners,thick] (4.5,2.6) -- (3.6,4.25) -- (5.4,4.25) -- cycle;

\fill[blue!30!white,rounded corners] (7.5,1.4) -- (6.6,-0.25) -- (8.4,-0.25) -- cycle;
\draw[blue,rounded corners,thick] (7.5,1.4) -- (6.6,-0.25) -- (8.4,-0.25) -- cycle;

\fill[blue!30!white] (10,4) circle (2.5mm);
\draw[blue,thick] (10,4) circle (2.5mm);

\fill[blue!30!white,rounded corners] (12.5,1.4) -- (11.6,-0.25) -- (13.4,-0.25) -- cycle;
\draw[blue,rounded corners,thick] (12.5,1.4) -- (11.6,-0.25) -- (13.4,-0.25) -- cycle;

\fill[green!30!white] (5,0) circle (2.5mm);
\draw[green,thick] (5,0) circle (2.5mm);

\fill[green!30!white] (8,4) circle (2.5mm);
\draw[green,thick] (8,4) circle (2.5mm);

\fill[green!30!white] (13,4) circle (2.5mm);
\draw[green,thick] (13,4) circle (2.5mm);

\node[below] (S1) at (2,1.8) {\tiny $s_1$};
\node[below] (S2) at (4,-0.2) {\tiny $s_2$};
\node[above] (S1') at (10,2.2) {\tiny $s_1$};
\node[above] (S2') at (12,4.2) {\tiny $s_2$}; 
\node[below] (x) at (8,-0.2) {\tiny $x$};
\node[above] (y1) at (10,4.2) {\tiny $y$};
\node[below] (y2) at (2,-0.2) {\tiny $y$};
\node[below] (cf1) at (2.6,0.9) {\tiny $c_f$};
\node[above] (cf2) at (10.6,3) {\tiny $c_f$};
\node[above] (x[1]1) at (11,4.2) {\tiny $\Sigma x$};
\node[below] (x[1]2) at (3,-0.2) {\tiny $\Sigma x$};
\node[above] (zf1) at (4, 4.2) {\tiny $z_f$};
\node[below] (zf2) at (12, -0.2) {\tiny $z_f$};
\node[below] (x<1>1) at (4.5, 2.8) {\tiny $x\extn{1}$};
\node[above] (x<1>2) at (12.5, 1.2) {\tiny $x\extn{1}$};
\node[below] (t1) at (5, -0.2) {\tiny $t$};
\node[above] (t2) at (13,4.2) {\tiny $t$};

\foreach \x in {1,2,...,13}
	\foreach \y in {0,2,...,4}
{	
\fill (\x, \y) circle (0.5mm);
}
\foreach \x in {1.5,2.5,...,13}
	\foreach \y in {1,3}
{	
\fill (\x, \y) circle (0.5mm);
}
\end{tikzpicture}
\caption{\label{fig:example}Auslander--Reiten quiver of $\sD$ with arrows omitted. A fundamental domain of the $(-2)$-CY orbit category $\sD$ is outlined in grey. The extension closure $\extn{\sS}$ is shaded dark red, $\extn{\Sigma \sS}$ mid-red, and $\extn{\Sigma^{-1} \sS}$ light red. $\sZ = {}^{\perp_2} \sS^{\perp_2}$ is shaded blue and green.}
\end{figure}
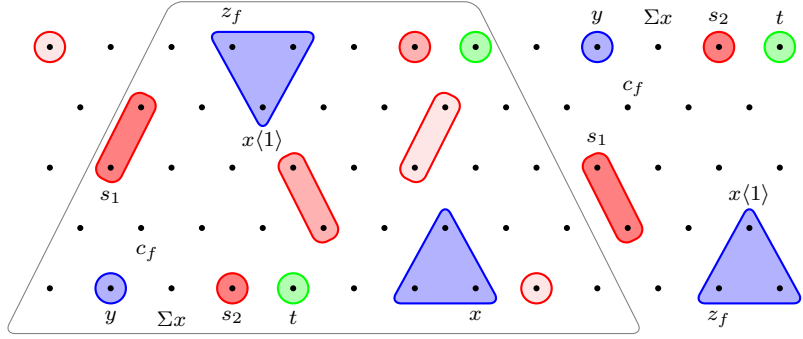
\end{example}

\begin{example} \label{subfactor}
For this example let $\sD = \Db(\kk A_3)/\Sigma^2\tau$, where $A_3$ is the linearly oriented Dynkin quiver of type $A_3$ and $\tau$ is the Auslander--Reiten translate in $\Db(\kk A_3)$. The category $\sD$ has nine isoclasses of indecomposable objects and the following AR quiver.
\[
\xymatrix{
           & x_9 \ar[dr] &                             & x_3 \ar[dr] &                             & x_6 \ar[dr] &                             & x_9 \ar[dr] &                             & x_3 & \\
\cdots &                 & x_2 \ar[dr] \ar[ur] &                   & x_5 \ar[dr] \ar[ur] &                   & x_8 \ar[dr] \ar[ur] &                  & x_2 \ar[dr] \ar[ur] &         & \cdots \\ 
           & x_1 \ar[ur] &                             & x_4 \ar[ur] &                             & x_7 \ar[ur] &                             & x_1 \ar[ur] &                             & x_4 &                 
}
\]
Let $\sS = \{x_1\}$, which is an orthogonal collection such that $\extn{\sS}$ is functorially finite. 
We will show that the triangulated structure on ${}\orth \sS\orth = \add\{x_4, x_5, x_7, x_9\}$ is not given by an Iyama-Yoshino subfactor construction.

Suppose ${}\orth \sS\orth = \sZ/[\sR]$ for some $\sR$-mutation pair $(\sZ,\sZ)$ in the sense of \cite{IY}. Note that $\sR \subset \sZ$ in this case. The objects $x_1, x_2, x_3, x_6$ and $x_8$ must be zero in $\sZ/[\sR]$ and $x_2, x_8 \notin \sR$ because we require nonzero morphisms $x_7 \to x_9$ and $x_9 \to x_4$.
Therefore $\sZ \subset \add\{x_1, x_3, x_4, x_5, x_6, x_7, x_9\}$. To get a triangle $x_7 \rightlabel{\bar{f}} x_9 \too x_4 \too x_7 \extn{1}$ in $\sZ/[\sR]$ we need a triangle in $\sD$ in which the first three terms lie in $\sZ$ whose image under the subfactor construction is this triangle. That is, we need a triangle of the form
\begin{equation} \label{candidate}
x_7 \oplus a \rightlabel{f} x_9 \oplus b \too x_4 \oplus c \too \Sigma(x_7 \oplus a),
\end{equation}
in which the summands $a, b, c \in \add\{x_1, x_3, x_6\}$ (these are the only summands which can be killed in the passage to $\sZ/[\sR]$).

Write $f = \begin{pmat} f_1 & f_2 \\ f_3 & f_4 \end{pmat}$. Clearly, $f_2 = 0$ for otherwise $a \in \add\{x_7,x_8, x_9\}$. Similarly, $f_3 = 0$ for otherwise $b \in \add\{x_7, x_8, x_9\}$. Therefore the triangle \eqref{candidate} is a direct sum of triangles,
\[
x_7 \oplus a \rightlabel{\begin{pmat} f_1 & 0 \\ 0 & f_4 \end{pmat}} x_9 \oplus b \too x_2 \oplus c \too \Sigma(x_7\oplus a),
\]
whose image under the subfactor construction is not the required triangle.
\end{example}

%======================================================================================
% the bibliography

\end{document}